\definecolor{ao(english)}{rgb}{0.0, 0.5, 0.0}
\definecolor{brickred}{rgb}{0.8, 0.25, 0.33}
\definecolor{burntorange}{rgb}{0.8, 0.33, 0.0}
\definecolor{beaver}{rgb}{0.62, 0.51, 0.44}
\definecolor{brown(traditional)}{rgb}{0.59, 0.29, 0.0}
\definecolor{ao(english)}{rgb}{0.0, 0.5, 0.0}
\def\rouge{\color{red}}
\def\sou{\underline}
\def\sur{\overline}
\newcommand{\lae}{\varepsilon}
\newcommand{\cp}{\mathcal P}
\newcommand{\binf}{\textrm{Inf}}
\newcommand{\Ind}{\mathrm{Ind}}
\newcommand{\Res}{\mathrm{Res}}
\newcommand{\Def}{\mathrm{Def}}
\newcommand{\Inf}{\mathrm{Inf}}
\newcommand{\Iso}{\mathrm{Iso}}
\newcommand{\Hom}{\mathrm{Hom}}
\newcommand{\End}{\mathrm{End}}
\newcommand{\Id}{\mathrm{Id}}
\renewcommand{\Im}{\mathrm{Im}}
\newcommand{\oviz}[1]{\overleftarrow{#1}}
\theoremstyle{plain}
\newtheorem{teo}{Theorem}
\newtheorem*{teo-non}{Theorem}
\newtheorem{prop}[teo]{Proposition}
\newtheorem{coro}[teo]{Corollary}
\newtheorem{lema}[teo]{Lemma}
\theoremstyle{definition}
\newtheorem{defi}[teo]{Definition}
\newtheorem{nota}[teo]{Notation}
\theoremstyle{remark}
\newtheorem{ejem}[teo]{Example}
\newtheorem{rem}[teo]{Remark}
\def\CC{\mathcal{C}}
\def\CD{\mathcal{D}}
\def\CP{\mathcal{P}}
\def\CY{\mathcal{Y}}
\def\Z{\mathbb{Z}}
\def\mpn{\medskip\par\noindent}
\def\mp{\medskip\par}
\newcommand{\AMod}[1]{#1\hbox{-}\mathrm{Mod}}
\author{Serge Bouc and Nadia Romero\\
}
\title{The center of a Green biset functor
}
\date{}
\begin{document}


\maketitle
\begin{center}
\begin{minipage}{12.5cm}
\begin{footnotesize}
{\bf Abstract:} For a Green biset functor $A$, we define the commutant and the center of $A$ and we study some of their properties and their relationship. This leads in particular to the main application of these constructions: the possibility of splitting the category of $A$-modules as a direct product of smaller abelian categories. We give explicit examples of such decompositions for some classical shifted representation functors. These constructions are inspired by similar ones for Mackey functors for a fixed finite group.\vspace{1ex}\\
{\bf Keywords: } biset functor, Green functor, functor category, center.\\
{\bf AMS MSC (2010): }16Y99, 18D10, 18D15, 20J15.
\end{footnotesize}
\end{minipage}
\end{center}
\section*{Introduction}
This paper is devoted to the construction of  two analogues of the center of a ring in the realm of Green biset functors, that is ``biset functors with a compatible ring structure''. 
 For a Green biset functor $A$, we present {\em the commutant}  $CA$ of $A$, defined from  a commutation  property, and {\em the center} $ZA$ of $A$, defined from  the structure of the category of $A$-modules. Both $CA$ and $ZA$ are again Green biset functors. These constructions are inspired by similar ones for Mackey functors for a fixed finite group made in Chapter~12 of \cite{bGfun}. \par
The commutant $CA$ is always a Green biset subfunctor of $A$, and we say that $A$ is {\em commutative} if $CA=A$. Most of the classical representation functors are commutative in that sense. One of them plays a fundamental - we should say {\em initial} - role, namely the Burnside biset functor $B$, as biset functors are nothing but {\em modules} over the Burnside functor. An important feature of the category $\AMod{B}$ is its monoidal structure: given two biset functors $M$ and $N$, one can build their tensor product $M\otimes N$, which is again a biset functor. For this tensor product, the category $\AMod{B}$ becomes a symmetric monoidal category, and a Green biset functor $A$ is a {\em monoid object} in $\AMod{B}$. \par
 More generally, for any Green biset functor $A$, we consider the category $\AMod{A}$ of $A$-modules. We will make a heavy use of the equivalence of categories between $\AMod{A}$ and the category of linear representations of the category $\cp_A$ introduced in Chapter 8 of~\cite{biset} (see also Definition~\ref{PA} below), which has finite groups as objects, and in which the set of morphisms from $G$ to $H$ is equal to $A(H\times G)$. 
The category $\cp_B$ associated to the Burnside functor is precisely the {\em biset category} of finite groups. It is a symmetric monoidal category (for the product given by the direct product of groups), and this monoidal structure induces via Day convolution (\cite{day}) the monoidal structure of $\AMod{B}$ mentioned before. \par
A natural question is then to know when the cartesian product of groups endows the category $\cp_A$ with a symmetric monoidal structure, and we show that this is the case precisely when $A$ is commutative. In this case the category $\AMod{A}$ also becomes a symmetric monoidal category.\par

Even though the definition of the center $ZA$ of a Green biset functor $A$ is fairly natural, showing that it is endowed with a Green biset functor structure (even showing that $ZA(G)$ is indeed a set!) is not an easy task, it requires several and sometimes rather nasty computations. On the other hand, one of the rewarding consequences of this laborious process is that we obtain a description of $ZA(G)$ in terms also of a commutation condition, this time on the morphisms of $\cp_A$. Once we have that $ZA$ is indeed a Green biset functor,  we show some nice properties of it, for instance that there is an injective morphism of Green biset functors from $CA$ to $ZA$. This implies in particular that $ZA$ is a $CA$-module. We show also that in case $A$ is commutative, it is a direct summand of $ZA$ as $A$-modules.

In the last section we work within $ZA(1)$, which, as we will see, coincides with the center of the category $A$-Mod.  Any decomposition of the identity element of $ZA(1)$ as a sum of orthogonal idempotents, fulfilling certain finiteness conditions, allows us to decompose $A$-Mod as a direct product of smaller abelian categories. Moreover, since $CA(1)$ is generally easier to compute than $ZA(1)$, we can also use similar decompositions of the identity element of $CA(1)$ instead, thanks to the inclusion $CA\hookrightarrow ZA$. We give then a series of explicit examples. The first one is the Burnside $p$-biset functor $A=RB_p$ over a ring $R$ where the prime $p$ is invertible. In this case, we obtain an infinite series of orthogonal idempotents in $ZA(1)$, and this shows in particular that $ZA$ can be much bigger than $CA$. Next we consider some classical representation functors, shifted by some fixed finite group $L$ via the {\em Yoneda-Dress functor}. In this series of examples, we will see that the smaller abelian categories obtained in the decomposition are also module categories for Green biset functors arising from the functor $A$, the shifting group $L$, and the above-mentioned idempotents.

\section{Preliminaries}
Throughout the paper, we fix a commutative unital ring $R$.
All referred groups will be finite.  The center of a ring $S$ will be denoted by $Z(S)$.
\subsection{Green biset functors}
The biset category over $R$ will be denoted by $R\CC$. Recall that its objects are all finite groups, and that for finite groups $G$ and $H$, the hom-set $\Hom_{R\CC}(G,H)$ is $RB(H,G)=R\otimes_\Z B(H,G)$, where $B(H,G)$ is the Grothendieck group of the category of finite $(H,G)$-bisets. The composition of morphisms in $R\CC$ is induced by $R$-bilinearity from the composition of bisets, which will be denoted by $\circ$. \par
We fix a non-empty class $\mathcal{D}$ of finite groups closed under subquotients and cartesian products, and a set {\bf D} of representatives of isomorphism classes of groups in $\CD$. We denote by $R\mathcal{D}$ the full subcategory of $R\CC$ consisting of groups in $\CD$, so in particular $R\mathcal{D}$ is a {\em replete subcategory} of $R\CC$, in the sense of \cite{biset}, Definition 4.1.7. The category of biset functors, i.e. the category of $R$-linear functors from $R\CC$ to the category $\AMod{R}$ of all $R$-modules, will be denoted by Fun$_R$. The category Fun$_{\mathcal{D},R}$ of {\em $\CD$-biset functors} is the category of $R$-linear functors from $R\mathcal{D}$ to $\AMod{R}$.

A Green $\CD$-biset functor is defined as a monoid in Fun$_{\CD,R}$ (see Definition 8.5.1 in~\cite{biset}). This is equivalent to the following definition:

\begin{defi}
\label{defgreen}
A $\CD$-biset functor $A$ is a Green $\CD$-biset functor if it is equipped with
bilinear products $A(G)\times A(H)\rightarrow A(G\times H)$ denoted by $(a,\, b)\mapsto a\times b$, for groups $G, H$ in $\CD$, and an identity element $\lae_A\in A(1)$, satisfying the following conditions:
\begin{itemize}
 \item[1.] Associativity. Let $G$, $H$ and $K$ be groups in $\CD$. If we consider  the canonical isomorphism from $G\times (H\times K)$ to $(G\times H)\times K$, then for any $a\in A(G)$, $b\in A(H)$ and $c\in A(K)$
\begin{displaymath}
 (a\times b)\times c=A\left(\Iso_{G\times(H\times K)}^{(G\times H)\times K}\right)(a\times (b\times c)).
\end{displaymath}
\item[2.] Identity element. Let $G$ be a group in $\CD$ and consider the canonical isomorphisms $1\times G\rightarrow G$  and $G\times 1 \rightarrow G$. Then for any $a\in A(G)$
\begin{displaymath}
 a=A\left(\Iso_{1\times G}^G\right)(\lae_A\times a)=A\left(\Iso_{G\times 1}^G\right)(a\times \lae_A).
\end{displaymath}
\item[3.] Functoriality. If $\varphi: G\rightarrow G'$ and $\psi: H\rightarrow H'$ are morphisms in $R\CD$, then for any $a\in A(G)$ and $b\in A(H)$
\begin{displaymath}
 A(\varphi \times \psi)(a\times b)=A(\varphi)(a)\times A(\psi)(b).
\end{displaymath}
 \end{itemize}
\end{defi}
The identity element of $A$ will be denoted simply by $\varepsilon$ if there is no risk of confusion.

If $A$ and $C$ are Green $\CD$-biset functors, a morphism of Green $\CD$-biset functors from $A$ to $C$ is a natural transformation $f:A\rightarrow C$ such that $f_{H\times K}(a\times b)=f_H(a)\times f_K(b)$ for any groups $H$ and $K$ in $\CD$ and any $a\in A(H)$, $b\in A(K)$, and such that $f_1(\lae_A)=\lae_C$. We will denote by Green$_{\CD,R}$ the category of Green $\CD$-biset functors with morphisms given in this way.

There is an equivalent way of defining a Green biset functor, as we see in the {next} lemma.

\begin{defi}
\label{defvieja}
A $\CD$-biset functor $A$ is a Green $\CD$-biset functor provided that for each group $H$ in $\CD$, the $R$-module $A(H)$ is an $R$-algebra with unity that satisfies the following. If $K$ and $G$ are groups in $\CD$ and $K\rightarrow G$ is a group homomorphism, then:
\begin{itemize}
\item[1.]  For the $(K,\, G)$-biset $G$, which we denote by $G_r$, the morphism $A(G_r)$ is a ring homomorphism.
\item[2.] For the $(G,\, K)$-biset $G$, denoted by $G_l$, the morphism $A(G_l)$ satisfies the Frobe\-nius identities for all $b\in A(G)$ and $a\in A(K)$,
\begin{displaymath}
A(G_l)(a)\cdot b=A(G_l)\big(a\cdot A(G_r)(b)\big)
\end{displaymath}
\begin{displaymath}
b\cdot A(G_l)(a)=A(G_l)\big(A(G_r)(b)\cdot a\big),
\end{displaymath}
where $\cdot$ denotes the ring product on $A(G)$, resp. $A(K)$.
\end{itemize}
\end{defi}

\begin{lema}[Lema 4.2.3 in \cite{tesis}]
\label{defeq}
The two previous definitions are equivalent. Starting by Definition \ref{defgreen}, the ring structure of $A(H)$ is given by
\begin{displaymath}
a\cdot b=A\left(\Iso_{\Delta(H)}^H\circ\Res^{H\times H}_{\Delta(H)}\right)(a\times b),
\end{displaymath}
for $a$ and $b$ in $A(H)$, with the unity given by $A(\Inf^{\,H}_1)(\varepsilon)$. Conversely, starting by Definition \ref{defvieja}, the product of $A(G)\times A(H)\rightarrow A(G\times H)$ is given by 
\begin{displaymath}
a\times b=A(\Inf_G^{\,\, G\times H})(a)\cdot A(\Inf_H^{\,\, G\times H})(b)
\end{displaymath}
for $a\in A(G)$ and $b\in A(H)$, with the identity element given by the unity of $A(1)$.
\end{lema}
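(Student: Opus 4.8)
The plan is to make both passages between the two definitions explicit, to verify in each direction the axioms of the target definition, and finally to check that the two passages are mutually inverse. The guiding principle is that, once the $R$-linearity and the functoriality of $A$ have been used, every verification reduces to an identity between morphisms of $R\CC$ --- i.e.\ between bisets --- and each such identity is a routine instance of the elementary relations of the biset category (associativity of the five basic operations and the commutation formulas between them).

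\emph{From Definition~\ref{defgreen} to Definition~\ref{defvieja}.} Write $D_H:=\Iso_{\Delta(H)}^H\circ\Res^{H\times H}_{\Delta(H)}\in RB(H,H\times H)$ for the diagonal restriction and set $a\cdot b:=A(D_H)(a\times b)$. Associativity of $\cdot$ follows from axiom~1 of Definition~\ref{defgreen} together with the biset identity saying that applying $D_H$ to the first two, resp.\ the last two, of three tensor factors yields the same morphism $H\times H\times H\to H$; that $A(\Inf^H_1)(\varepsilon)$ is a two-sided unit follows from axiom~2 and the identities $D_H\circ(\Inf^H_1\times\Id_H)=\Iso_{1\times H}^H$ and $D_H\circ(\Id_H\times\Inf^H_1)=\Iso_{H\times 1}^H$. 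For a group homomorphism $\varphi\colon K\to G$, functoriality (axiom~3) reduces the assertion that $A(G_r)$ is a ring homomorphism to $G_r\circ D_G=D_K\circ(G_r\times G_r)$ --- whose essential content is that $\varphi\times\varphi$ carries $\Delta(K)$ into $\Delta(G)$ --- and reduces the two Frobenius identities to $D_G\circ(G_l\times\Id_G)=G_l\circ D_K\circ(\Id_K\times G_r)$ and its counterpart with the factors exchanged, which are the corresponding relations for the operation $G_l$ adjoint to $G_r$.

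\emph{From Definition~\ref{defvieja} to Definition~\ref{defgreen}.} Put $a\times b:=A(\Inf_G^{G\times H})(a)\cdot A(\Inf_H^{G\times H})(b)$ in the ring $A(G\times H)$ (legitimate since $\CD$ is closed under products) and take the unity of $A(1)$ as $\varepsilon$. Bilinearity is clear. For axiom~1, expand $(a\times b)\times c$ and $a\times(b\times c)$, using that a composite of two inflations is an inflation (so that $G$, $H$ and $K$ appear as quotients of $G\times H\times K$), that an inflation along a surjection is a biset of type $G_r$ and hence is sent by $A$ to a ring homomorphism, and associativity of the ring products; both sides then equal the product in $A(G\times H\times K)$ of the three inflations of $a$, $b$, $c$, matched by the canonical isomorphism. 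For axiom~2, observe that $A(\Inf^{1\times G}_1)(\varepsilon)$ is the unit of $A(1\times G)$ (inflation along a surjection being a ring homomorphism) and that $\Inf_G^{1\times G}$ is the biset underlying the canonical isomorphism, with the mirror statements on the other side.

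\textbf{The main obstacle} is axiom~3 in this second direction: for a general pair $(\varphi,\psi)$ the map $A(\varphi\times\psi)$ is \emph{not} a ring homomorphism, so one must bring in the Frobenius identities. Using that $\times$ is a bifunctor on $R\CC$, reduce first to the cases $(\varphi,\Id)$ and $(\Id,\psi)$, and then, via the factorization of an arbitrary biset as a composite of the five elementary types, to $\varphi\in\{\Iso,\Inf,\Res,\Ind,\Def\}$ with $\psi=\Id$ (the other side being symmetric). The isomorphism, inflation and restriction cases proceed as in axiom~1, since there $A(\varphi\times\Id)$ is still a ring homomorphism. The induction and deflation cases are the delicate ones: for $\varphi=\Ind_{G_0}^{G'}$ one uses $\Res^{G'\times H}_{G_0\times H}\circ\Inf_H^{G'\times H}=\Inf_H^{G_0\times H}$ to recognise the second factor of $a\times b$ as lying in the image of a restriction, applies the Frobenius identity to extract the induction, and then invokes the commutation of $\Ind$ with $\Inf$ along a normal subgroup contained in the inducing subgroup to obtain $A(\Ind_{G_0\times H}^{G'\times H})(a\times b)=A(\Ind_{G_0}^{G'})(a)\times b$; the deflation case is dual. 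Finally, that the two passages are mutually inverse reduces once more to biset identities, essentially $D_{G\times H}\circ(\Inf_G^{G\times H}\times\Inf_H^{G\times H})=\Id_{G\times H}$ and its analogue for the internal product, verified by direct computation. Apart from this induction/deflation step, all the work is routine bookkeeping with the elementary relations.
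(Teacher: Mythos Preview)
The paper does not actually prove this lemma: it is stated with a citation to \cite{tesis} and no argument is given. So there is no ``paper's own proof'' to compare against, and your proposal has to be judged on its own.

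Your outline is correct and is the standard route. Both passages are set up properly, and you have identified the only genuinely nontrivial step: functoriality (axiom~3 of Definition~\ref{defgreen}) in the direction Definition~\ref{defvieja}~$\Rightarrow$ Definition~\ref{defgreen}. Your reduction---first to $(\varphi,\Id)$ and $(\Id,\psi)$ via $\varphi\times\psi=(\varphi\times\Id)\circ(\Id\times\psi)$, then to the five elementary biset types by the Bouc decomposition, noting that the property is stable under composition and $R$-linear combinations---is the right one. The $\Iso/\Inf/\Res$ cases are indeed immediate because these are of type $G_r$ and hence give ring homomorphisms. For $\Ind$ your Frobenius computation is correct: the key inputs are $\Res^{G'\times H}_{G_0\times H}\circ\Inf_H^{G'\times H}=\Inf_H^{G_0\times H}$ and $\Ind_{G_0\times H}^{G'\times H}\circ\Inf_{G_0}^{G_0\times H}=\Inf_{G'}^{G'\times H}\circ\Ind_{G_0}^{G'}$, both standard biset relations. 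The $\Def$ case is genuinely dual (use $\Inf_H^{G_0\times H}=\Inf_{(G_0/N)\times H}^{G_0\times H}\circ\Inf_H^{(G_0/N)\times H}$ and the commutation of $\Def$ with $\Inf$ along transversal normal subgroups). The mutual-inverse check via $D_{G\times H}\circ(\Inf_G^{G\times H}\times\Inf_H^{G\times H})=\Id_{G\times H}$ and $D_H\circ\Inf_H^{H\times H}=\Id_H$ (for either projection) is also right; the second of these uses that $D_H$ is itself of type $G_r$, hence a ring homomorphism, which you should perhaps say explicitly.

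In short: your proof is complete and correct; since the paper offers none, there is nothing further to compare.
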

In what follows, the ring structure on $A(G)$ will be understood as $\big(A(G),\cdot\big)$.\par
Observe that in the case of $A(1)$, the product $\times :A(1)\times A(1)\rightarrow A(1)$ coincides with the ring product $\cdot :A(1)\times A(1)\rightarrow A(1)$, up to identification of $1\times 1$ with $1$, and the unity coincides with the identity element. 

\begin{rem}
A morphism of Green $\CD$-biset functors $f:A\rightarrow C$ induces, in each component~$G$, a unital ring homomorphism $f_G:A(G)\rightarrow C(G)$. Conversely, a morphism of biset functors $f:A\rightarrow C$ such that $f_G$ is a unital ring homomorphism for every $G$ in~$\mathcal{D}$, is a morphism of Green $\CD$-biset functors. \end{rem}

\begin{ejem}
\label{ejemplos}
Classical examples of Green biset functors are the following:
\begin{itemize}
\item[$\bullet$] The Burnside functor $B$. The Burnside group of a finite group $G$ is known to define a biset functor. The cross product of sets defines the bilinear products $B(G)\times B(H)\rightarrow B(G\times H)$ that make $B$ a Green biset functor. The functor $B$ can also be considered with coefficients in $R$, and denoted by $RB=R\otimes_{\mathbb{Z}}B(\,\_\,)$. It is shown in Proposition 8.6.1  of~\cite{biset} that $RB$ is an initial object in Green$_{\CD,R}$. More precisely, for a Green $\CD$-biset functor~$A$, the unique morphism of Green functors $\upsilon_A:RB\to A$ is defined at $G\in\CD$ as the linear map $\upsilon_{A, G}$ sending a $G$-set~$X$ to $A({_GX_1})(\varepsilon_A)$, where ${_GX_1}$ is the set $X$ viewed as a $(G,1)$-biset.
\item[$\bullet$] The functor of $\mathbb{K}$-linear representations, $R_{\mathbb{K}}$, where $\mathbb{K}$ is a field of characteristic~0. That is, the functor which sends a finite group $G$ to the Grothendieck group $R_\mathbb{K}(G)$ of the category of finitely generated $\mathbb{K}G$-modules. Also known to be a biset functor, it has a Green biset functor structure given by the tensor product over $\mathbb{K}$. We will consider the scalar extension $\mathbb{F}R_{\mathbb{K}}=\mathbb{F}\otimes_{\mathbb{Z}}R_{\mathbb{K}}(\,\_\,)$, where $\mathbb{F}$ is a field of characteristic 0.
\item[$\bullet$] The functor of $p$-permutation representations $pp_k$, for $k$ an algebraically closed field of positive characteristic $p$. 
This is the functor sending a finite group $G$ to  the Grothendieck group $pp_k(G)$ of the category of finitely generated $p$-permutation $kG$-modules (also known as trivial source modules), for relations given by direct sum decompositions. The biset functor $pp_k$ is a Green biset functor with products given by the tensor product over the field $k$. When considering coefficients for this functor, we will assume that $\mathbb{F}$ is a field of characteristic 0 containing all the $p'$-roots of unity, and we write $\mathbb{F}pp_k=\mathbb{F}\otimes_{\mathbb{Z}}pp_k(\,\_\,)$. 
\end{itemize}
\end{ejem}
In Section~\ref{some examples} we will focus on the above examples only, but there are many other important examples of Green biset functors, e.g. the monomial Burnside functor - also called the fibred Burnside functor -, which gives rise to fibred biset functors (see \cite{barker-fibred}, \cite{mine2}, \cite{boltje-coskun}), or the slice Burnside functor (see \cite{slisec}, \cite{tounkara}, \cite{tounkara-cras}).\medskip\par
When $p$ is a prime number, and $\CD$ is the full subcategory of $\CC$ consisting of finite $p$-groups, the $\CD$-biset functors are simply called {\em $p$-biset functors}, and their category is denoted by ${\rm Fun}_{p,R}$. Similarly, the Green $\CD$-biset functors will be called {\em Green $p$-biset functors}, and their category will be denoted by ${\rm Green}_{p,R}$.\mpn

An important element in what follows will be the Yoneda-Dress construction. We recall some of the basic results about it, more details can be found in Section 8.2 of~\cite{biset}.
If $G$ is a fixed group in $\CD$ and $F$ is a $\CD$-biset functor, then the Yoneda-Dress construction of $F$ at $G$ is the $\CD$-biset functor $F_G$ that sends each group $K$ in $\CD$ to $F(K\times G)$. The morphism $F_G(\varphi): F(H\times G)\to F(K\times G)$  associated to an element $\varphi$ in $RB(K,\, H)$ 	is defined as $F(\varphi \times G)$. In turn $F(\varphi\times G)$ is defined by $R$-bilinearity from the case where $\varphi$ is represented by a $(K,H)$-biset $U$: in this case $\varphi\times G$ denotes the cartesian product $U\times G$, endowed with its obvious $(K\times G,H\times G)$-biset structure. We also call $F_G$ the functor \textit{shifted} by $G$.\par
If $f:F\rightarrow T$ is a morphism of $\CD$-biset functors, then $f_G:F_G\rightarrow T_G$ is defined in its component $K$ as $(f_G)_K=f_{K\times G}$. It is shown in Proposition 8.2.7 of \cite{biset} that this construction is a self-adjoint exact $R$-linear endofunctor of Fun$_{D,R}$. \par When $A$ is a Green $\CD$-biset functor, the particular shifted functor $A_G$ is also  
a Green $\CD$-biset functor (Lemma 4.4 in \cite{mine})  with product given in the following way:
\begin{displaymath}
A_G(H)\times A_G(K)\rightarrow A_G(H\times K)\quad (a,\, b)\mapsto A(\alpha)(a\times b)
\end{displaymath}
where $\alpha$ is the biset $\Iso_D^{H\times K\times G}\Res^{H\times G\times K\times G}_D$ and $D \cong H\times K\times G$ is the subgroup of $H\times G\times K\times G$ consisting of elements of the form $(h,\,g,\,k,\,g)$. Usually, by an abuse of notation, we will denote this biset simply by $\Res^{H\times G\times K\times G}_{H\times K\times \Delta(G)}$. To avoid confusion with the product $\times$ of $A$ we denote the product of $A_G$ by $\times^d$, where the exponent $d$ stands for {\em diagonal}.

\begin{rem}
It is not hard to show that the ring structure of Lemma \ref{defeq} in $A_G(H)$ induced by the product $\times^d$ of $A_G$ coincides  with the ring structure of $A(H\times G)$ induced by the product $\times$ of $A$. So there is no risk of confusion when talking about  \textit{the ring} $A_G(H)$, since the ring structure we are considering is unique. In particular, the isomorphism $A_G(1)\cong A(G)$ is an isomorphism of rings. 
\end{rem}

\subsection{A-modules}
\label{Amod}

\begin{defi} [Definition 8.5.5 in \cite{biset}] 
Given a Green $\CD$-biset functor $A$, a left $A$-module $M$ is defined as a $\CD$-biset functor, together with bilinear products
\begin{displaymath}
\_\,\times \_ :A(G)\times M(H)\longrightarrow M(G\times H)
\end{displaymath} 
for every pair of groups $G$ and $H$ in $\CD$, that satisfy analogous conditions to those of Definition \ref{defgreen}.  {The notion of right $A$-module is defined similarly, from bilinear products $M(G)\times A(H)\longrightarrow M(G\times H)$.}

\end{defi}

 We use the same notation $\times$ for the product of $A$ and the action of $A$ on $A$-modules, as long as there is no risk of confusion.

If $M$ and $N$ are $A$-modules, a {\em morphism of $A$-modules} is defined as a morphism of $\CD$-biset functors $f:M\rightarrow N$ such that $f_{G\times H}(a\times m)=a\times f_H(m)$ for all groups $G$ and $H$ in $\CD$, $a\in A(G)$ and $m\in M(H)$. With these morphisms, the $A$-modules form a category, denoted by $\AMod{A}$. The category  $\AMod{A}$ {is an abelian subcategory of Fun$_{\CD,R}$}. Actually, the direct sum of biset functors is as well the direct sum of $A$-modules. Also, the kernel, the image and the cokernel of a morphism of $A$-modules are $A$-modules. Basic results {on} modules over a ring can be stated for $A$-modules.\par
In particular, a left (resp. right) ideal of a Green $\CD$-biset functor $A$ is an $A$-submodule of the left (resp. right) $A$-module $A$. A two sided ideal of $A$ is a left ideal which is also a right ideal.

\begin{ejem} If $A$ is the Burnside functor $RB$, then an $A$-module is nothing but a biset functor with values in $\AMod{R}$.
\end{ejem}
From Proposition 8.6.1 of \cite{biset}, or Proposition 2.11 of \cite{mine}, an equivalent way of defining an $A$-module is as an $R$-linear functor from the category $\cp_A$ to $\AMod{R}$, the category $\cp_A$ being  defined next.
\begin{defi} \label{PA}Let $A$ be a Green $\CD$-biset functor over $R$. The category $\cp_A$ is defined in the following way:
\begin{itemize}
 \item The objects of $\cp_A$ are all finite groups {in $\CD$}.
 \item If $G$ and $H$ are groups {in $\CD$}, then ${\Hom}_{\cp_A}(H,\, G)=A(G\times H)$.
 \item Let $H,\, G$ and $K$ be groups {in $\CD$}. The composition of $\beta\in A(H\times G)$ and $\alpha\in A(G\times K)$ in $\cp_A$ is the following:
\begin{displaymath}
\beta \circ \alpha = A\left(\Def^{\,H\times\Delta(G)\times K}_{H\times K}\circ\Res^{H\times G\times G\times K}_{H\times\Delta(G)\times K}\right)(\beta\times\alpha).
\end{displaymath}
\item For a group $G$ in $\CD$, the identity morphism $\varepsilon_G$ of $G$ in $\cp_A$ is $A(\textrm{Ind}_{\Delta(G)}^{G\times G}\circ \textrm{Inf}_1^{\,\Delta (G)})(\varepsilon)$.
\end{itemize}
\end{defi}

Observe that the biset $\Def^{\,H\times\Delta(G)\times K}_{H\times K}\circ\Res^{H\times G\times G\times K}_{H\times\Delta(G)\times K}$
can also be written as 
\begin{displaymath}
H\times\left(\Def^{\Delta(G)}_{1}\circ\Res^{G\times G}_{\Delta(G)}\right)\times K.
\end{displaymath}
Another way of denoting the $(1,\,G \times G)$-biset $\Def^{\Delta(G)}_{1}\circ\Res^{G\times G}_{\Delta(G)}$
is as $\oviz{G}$. In some cases it will be more convenient to use this notation.\medskip\par
 
The category $\cp_A$ is essentially small, as it has a skeleton consisting of our chosen set {\bf D} of representatives of isomorphism classes of groups in $\CD$. Hence, the category Fun$_R(\cp_A,\AMod{R})$ of $R$-linear functors is an abelian category. The above-mentioned equivalence of categories between $\AMod{A}$ and Fun$_R(\cp_A,\AMod{R})$ is built as follows:
\begin{itemize}
\item If $M$ is an $A$-module, let $\widetilde{M}\in{\rm Fun}_R(\cp_A,\AMod{R})$ be the functor defined by:
\begin{enumerate}
\item For $G\in \CD$, we have $\widetilde{M}(G)=M(G)$.
\item For $G,H\in\CD$ and a morphism $\alpha\in A(H\times G)$ from $G$ to $H$ in $\cp_A$, the map $\widetilde{\alpha}:\widetilde{M}(G)\to\widetilde{M}(H)$ is the map sending 
$$m\in M(G)\mapsto M(H\times\oviz{G})(\alpha\times m).$$
\end{enumerate}
\item Conversely if $F\in{\rm Fun}_R(\cp_A,\AMod{R})$, let $\widehat{F}$ be the $A$-module defined by:
\begin{enumerate}
\item If $G\in \CD$, then $\widehat{F}(G)=F(G)$.
\item For $G,H\in\CD$,  $a\in A(G)$ and $m\in F(H)$, set
$$a\times m=F\Big(A\big(\Ind_{G\times\Delta(H)}^{G\times H\times H}\Inf_{G}^{G\times H}\big)(a)\Big)(m)\in F(G\times H),$$
where $A\big(\Ind_{G\times\Delta(H)}^{G\times H\times H}\Inf_{G}^{G\times H}\big)(a)\in A(G\times H\times H)$ is viewed as a morphism from $H$ to $G\times H$ in the category $\cp_A$.
\end{enumerate}
\end{itemize}
Then $M\mapsto\widetilde{M}$ and $F\mapsto\widehat{F}$ are well defined equivalences of categories between $\AMod{A}$ and ${\rm Fun}_R(\cp_A,\AMod{R})$, inverse to each other.\medskip\par

Finally, we extend to $A$-modules our previous definition of the Yoneda-Dress construction.
\begin{defi}
Let $A$ be a Green $\CD$-biset functor. For $L\in \CD$, consider the assignment $\rho_L={-}\times L$ defined for objects $G,H$ of $\cp_A$ and morphisms $\alpha\in \Hom_{\cp_A}(G,H)=A(H\times G)$ by:
$$\left\{\begin{array}{rcl}
\rho_L(G)&=&G\times L\\
\rho_L(\alpha)&=&\alpha\times L:=\Iso_{H\times G\times L\times L}^{H\times L\times G\times L}\big(\alpha\times \upsilon_{A,L\times L}(L)\big)
\end{array}\right.$$
where $\upsilon_{A,L\times L}(L)$ is the image in $A(L\times L)$ of the identity $(L,L)$-biset $L$ under the canonical morphism $\upsilon_{A,L\times L}$, and the isomorphism $H\times G\times L\times L\to H\times L\times G\times L$ maps $(h,g,l_1,l_2)$ to $(h,l_1,g,l_2)$. 
\end{defi}

A straightforward computation shows that 
$$\rho_L(\alpha)=A\big(\Ind_{H\times G\times L}^{H\times L\times G\times L}\Inf_{H\times G}^{H\times G\times L}\big)(\alpha),$$
and this form may be more convenient for calculations. Here $H\times G\times L$ embeds in $H\times L\times G\times L$ via the map $(h,g,l)\mapsto (h,l,g,l)$, and maps surjectively onto $H\times G$ via $(h,g,l)\mapsto (h,g)$.\par
It is easy to check that $\rho_L$ is in fact an endofunctor of $\cp_A$, called the {\em (right) $L$-shift}. It induces by precomposition an endofunctor of the category ${\rm Fun}_{R}(\cp_A,\AMod{R})$, that is, up to the above equivalence of categories, an endofunctor of the category $\AMod{A}$, which can be described as follows. It maps an $A$-module $M$ to the shifted $\CD$-biset functor $M_L$, endowed with the following product: for $G,H\in \CD$, $\alpha\in A(H)$ and $m\in M_L(G)=M(G\times L)$, the element $\alpha\times m$ of $M_L(H\times G)=M(H\times G\times L)$ is simply the element $\alpha\times m$ obtained from the $A$-module structure of~$M$.\par
This endofunctor $M\mapsto M_L$ of the category $\AMod{A}$ will be denoted by $\Id_L$. It is the Yoneda-Dress construction for $A$-modules. 
\begin{rem} \label{bifunctor} For $L\in\CD$, there is another obvious endofunctor $\lambda_L=L\times{-}$ of $\cp_A$ defined for objects $G,H$ of $\cp_A$ and morphisms $\alpha\in \Hom_{\cp_A}(G,H)=A(H\times G)$ by
$$\left\{\begin{array}{rcl}
\lambda_L(G)&=&L\times G\\
\lambda_L(\alpha)&=&L\times \alpha:=\Iso_{L\times L\times H\times G}^{L\times H\times L\times G}\big(\upsilon_{A,L\times L}(L)\times \alpha\big)
\end{array}\right.$$
where  the isomorphism $L\times L\times H\times G\to L\times H\times L\times G$ maps $(l_1,l_2,h,g)$ to $(l_1,h,l_2,g)$. As before, it is easy to see that $L\times\alpha=A\big(\Ind_{L\times H\times G}^{L\times H\times L\times G}\Inf_{H\times G}^{L\times H\times G}\big)(\alpha)$.

It is then natural to ask if the assignment $\times:\cp_A\times\cp_A\to \cp_A$ sending $(G,K)$ to $G\times K$ and $(\alpha,\beta)\in A(H\times G)\times A(L\times K)$ to $(\alpha\times L)\circ (G\times \beta)\in A(H\times L\times G\times K)$ is a functor. We will answer this question at the end of Section~\ref{commutant} (Corollary~\ref{PA monoidal}).
\end{rem}

\section{Adjoint functors}
Let $A$ and $C$ be Green $\CD$-biset functors. A morphism $f:A\to C$ of Green $\CD$-biset functors induces an obvious functor $\CP_f:\CP_A\to \CP_C$, which is the identity on objects, and maps $\alpha\in \Hom_{\cp_A}(G,H)=A(H\times G)$ to $f_{H\times G}(\alpha)\in C(H\times G)=\Hom_{\cp_C}(G,H)$. \par
Let $L$ be a fixed group in $\CD$. The {\em inflation} morphism $\Inf_L:A\to A_L$, introduced in~\cite{thbenja}, is the morphism of Green biset functors defined for each $G\in\CD$ and each $\alpha\in A(G)$ by $\Inf_L(\alpha)=A(\Inf_{G}^{G\times L})(\alpha)\in A(G\times L)=A_L(G)$, where $G$ is identified with $(G\times L)/(\{1\}\times L)$. The corresponding functor $\cp_A\to \cp_{A_L}$ will be denoted by $\psi_L$. Explicitely, for each $G\in \CD$, we have $\psi_L(G)=G$, and for a morphism $\alpha\in A(H\times G)$, we have 
$$\psi_L(\alpha)=A(\Inf_{H\times G}^{H\times G\times L})(\alpha)\in A(H\times G\times L)=A_L(H\times G)=\Hom_{\CP_{A_L}}\big(\psi_L(G),\psi_L(H)\big).$$
We introduce another functor $\theta_L:\cp_{A_L}\to \cp_A$, defined as follows:
 for an object $G$ of~$\CP_{A_L}$, wet set $\theta_L(G)=G\times L$, wiewed as an object of~$\CP_A$. For a morphism $\alpha\in \Hom_{\CP_{A_L}}(G,H)=A_L(H\times G)=A(H\times G\times L)$, we define
$$\theta_L(\alpha)=A(\Ind_{H\times G\times L}^{H\times L\times G\times L})(\alpha)\in A(H\times L\times G\times L)=\Hom_{\CP_A}\big(\theta_L(G),\theta_L(H)\big),$$
where $H\times G\times L$ is viewed as a subgroup of $H\times L\times G\times L$ via the injective group homomorphism $(h,g,l)\in H\times G\times L\mapsto (h,l,g,l)\in H\times L\times G\times L$.
\begin{nota} In what follows, we will use a convenient abuse of notation, and generally drop the symbols $\times$ of cartesian products of groups, writing e.g. $HLGL$ instead of $H\times L\times G\times L$.
\end{nota}
\pagebreak[3]
\begin{teo} \label{adjonctions-enonce}
\begin{enumerate}
\item $\psi_L$ is an $R$-linear functor from $\CP_A$ to $\CP_{A_L}$.
\item $\theta_L$ is an $R$-linear functor from $\CP_{A_L}$ to $\CP_A$.
\item The functors $\psi_L$ and $\theta_L$ are left and right adjoint to one another. In other words, for any $G$ and $H$ in $\CD$, there are $R$-module isomorphisms
\begin{align*}
\Hom_{\CP_{A_L}}\big(G,\psi_L(H)\big)&\cong \Hom_{\CP_A}\big(\theta_L(G),H\big)\\
\Hom_{\CP_{A_L}}\big(\psi_L(G),H\big)&\cong \Hom_{\CP_{A}}\big(G,\theta_L(H)\big)\\
\end{align*}
\end{enumerate}
\vspace{-2ex}
which are natural in $G$ and $H$.
\end{teo}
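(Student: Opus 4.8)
The plan is to verify the three claims in order, with most of the work going into the adjunction isomorphisms of part (3). For parts (1) and (2), $R$-linearity on hom-sets is clear since $\psi_L$ and $\theta_L$ are defined by applying the $R$-linear maps $A(\Inf_{HG}^{HGL})$ and $A(\Ind_{HGL}^{HLGL})$ to morphisms, and these commute with $R$-linear combinations. What genuinely needs checking is that each assignment respects composition and identities in $\cp_A$, resp. $\cp_{A_L}$. Here I would unwind the composition formula of Definition~\ref{PA}: for $\psi_L$, composition in $\cp_{A_L}$ involves a $\Def\circ\Res$ operation on the middle $G\times G$ block \emph{together with} the extra $L$-coordinate carried by the shifted functor, whose product $\times^d$ is the diagonal one; I would match this against applying $\Inf$ after composing in $\cp_A$, using the description of $\times^d$ via $\Res^{\cdots\times G\times\cdots\times G}_{\cdots\times\Delta(G)\times\cdots}$ recalled in the Preliminaries. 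For $\theta_L$, the corresponding bookkeeping is on the map $(h,g,l)\mapsto(h,l,g,l)$ and its interaction with the deflation-restriction over the diagonal of the middle group. In both cases this is a manipulation of bisets of the form $HXG$ (products of inflations, restrictions, deflations, inductions) and the standard push-pull/Mackey-type relations among them; the functor $\cp_f$ attached to the Green-functor morphisms $\Inf_L:A\to A_L$ (for $\psi_L$) can be used to shortcut part of this.

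For part (3), I would construct the two isomorphisms explicitly rather than invoke an abstract adjunction criterion, since we have concrete formulas. Note first that $\Hom_{\cp_{A_L}}(G,\psi_L(H))=A_L(H\times G)=A(HGL)$ while $\Hom_{\cp_A}(\theta_L(G),H)=A(H\times G\times L)=A(HGL)$ as well — so the first isomorphism is, at the level of underlying $R$-modules, essentially an identity, and the content is that it is natural, i.e. intertwines the two composition actions. Concretely: precomposition in $\cp_{A_L}$ by a morphism $G'\to G$ is an $A_L$-operation, hence an $A$-operation involving the $\Delta(G)$-diagonal plus the $L$-coordinate; precomposition in $\cp_A$ by $\theta_L$ of that same morphism, i.e. by $A(\Ind)(\cdot)$, after the dust settles produces the same element of $A(HGL)$. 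I expect this to follow from a single biset identity comparing, on $HG'GL$-type groups, the composite $\Def\circ\Res\circ(-\times L)$ with $\Def\circ\Res\circ\Ind\circ\Inf$. The second isomorphism, $\Hom_{\cp_{A_L}}(\psi_L(G),H)=A(HLG)\cong A(HLGL)=\Hom_{\cp_A}(G,\theta_L(H))$, is \emph{not} an identity on underlying modules; the natural candidate is $\alpha\mapsto A(\Ind_{HLG}^{HLGL})(\alpha)$ via $(h,l,g)\mapsto(h,l,g,l)$, with inverse given by the corresponding deflation $A(\Def_{HLGL}^{HLG})$ (i.e. fixing the last $L$), and one checks these are mutually inverse because $\Def\circ\Ind$ along this particular inclusion of the "extra diagonal $L$" is the identity (the relevant double-coset count collapses). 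Naturality in $G$ and $H$ is then again a biset-identity verification against the two composition rules.

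The main obstacle will be the bookkeeping in part (3), specifically keeping straight the many copies of $L$ and $G$ and the order of the factors when comparing the composition in $\cp_{A_L}$ (which, through the $\times^d$ product, already hides a diagonal restriction on $L$) with the composition in $\cp_A$ after transport by $\theta_L$ or $\psi_L$. A clean way to control this is to fix once and for all the abbreviation convention (as in the Notation above), to name every structural group homomorphism by its effect on elements, and to reduce every composite biset to a "normal form" $\Inf\circ\Def\circ\Res\circ\Ind$ with explicit subgroups, using that such normal forms are equal iff the underlying group data match; the Mackey and Goursat-type relations from \cite{biset} then do all the heavy lifting. Once the two isomorphisms are shown to be $R$-linear, mutually inverse where appropriate, and compatible with the composition actions on both sides, naturality is automatic and the theorem follows. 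A useful sanity check throughout is the special case $L=1$, where $\psi_1$ and $\theta_1$ should both reduce to the identity functor of $\cp_A$ and both adjunctions to the identity.
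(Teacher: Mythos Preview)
Your overall strategy---unwind composition, push through biset identities, and for the adjunction write down explicit maps and check naturality---is exactly what the paper does, and your identification of the first adjunction isomorphism as the identity map on $A(HGL)$ is correct and matches the paper. Your shortcut for part~(1) via the Green-functor morphism $\Inf_L$ is also the paper's one-line argument.

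There is, however, a genuine error in your treatment of the second adjunction. You compute $\Hom_{\cp_A}\big(G,\theta_L(H)\big)=A(HLGL)$, but $\theta_L(H)=H\times L$ as an \emph{object} of $\cp_A$, so this hom-set is $A\big((H\times L)\times G\big)=A(HLG)$, not $A(HLGL)$; there is no second copy of $L$. (You also wrote $\Hom_{\cp_{A_L}}(\psi_L(G),H)=A(HLG)$, whereas it is $A_L(H\times G)=A(HGL)$.) Consequently your proposed bijection $\alpha\mapsto A(\Ind_{HLG}^{HLGL})(\alpha)$ lands in the wrong module and cannot be the adjunction map; moreover, your suggested inverse ``$\Def$'' would not be a two-sided inverse in any case, since $\Ind\circ\Def$ is essentially never the identity. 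The correct candidate, once the hom-sets are identified as $A(HGL)$ and $A(HLG)$, is simply the coordinate-swap $\widehat{\alpha}=A(\Iso_{HGL}^{HLG})(\alpha)$, and the naturality check then proceeds by the same kind of biset manipulation you outline for the first adjunction. With that correction your plan goes through and coincides with the paper's proof.
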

\begin{proof} Point (1) is clear, since the functor $\psi_L$ is built from a morphism of Green biset functors $\Inf_L:A\to A_L$. \mpn
To prove (2), 
let $G, H, K\in\CD$. If $\alpha\in A_L(HG)$ and $\beta\in A_L(KH)$, then
\begin{align*}
\theta_L(\beta)\circ\theta_L(\alpha)&=A(\Def_{KLGL}^{KLHLGL}\Res_{KLHLGL}^{KLHLHLGL})\Big(A\big(\Ind_{KHL}^{KLHL}\big)(\beta)\times A\big(\Ind_{HGL}^{HLGL}\big)(\alpha)\Big)\\
&= A(\Def_{KLGL}^{KLHLGL}\Res_{KLHLGL}^{KLHLHLGL}\Ind_{KHLHGL}^{KLHLHLGL})(\beta\times\alpha).
\end{align*}
In the restriction $\Res_{KLHLGL}^{KLHLHLGL}$, the group $KLHLGL$ maps into $KLHLHLGL$ via 
$$f:(k,l_1,h,l_2,g,l_3)\in KLHLGL\mapsto (k,l_1,h,l_2,h,l_2,g,l_3)\in KLHLHLGL,$$
and in the induction $\Ind_{KHLHGL}^{KLHLHLGL}$, the group $KHLHGL$ maps into $KLHLHLGL$ via 
$$f':(k',h'_1,l'_1,h'_2,g,l'_2)\in KHLHGL\mapsto (k',l'_1,h'_1,l'_1,h'_2,l'_2,g',l'_2)\in KLHLHLGL.$$
Then one checks readily that $\Im(f)\Im(f')=KLHLHLGL$, and that $\Im(f)\cap\Im(f')$ is isomorphic to $KHGL$. Hence by the Mackey formula, there is an isomorphism of bisets
$$\Res_{KLHLGL}^{KLHLHLGL}\Ind_{KHLHGL}^{KLHLHLGL}\cong \Ind_{KHGL}^{KLHLGL}\Res_{KHGL}^{KHLHGL},$$
where in $\Ind_{KHGL}^{KLHLGL}$, the inclusion $KHGL\hookrightarrow KLHLGL$ is $(k,h,g,l)\mapsto (k,l,h,l,g,l)$, and in $\Res_{KHGL}^{KHLHGL}$, the inclusion $KHGL\hookrightarrow KHLHGL$ is $(k,h,g,l)\mapsto (k,h,l,h,g,l)$.\par
Now in the deflation $\Def_{KLGL}^{KLHLGL}$, the group $KLHLGL$ maps onto $KLGL$ via $(k,l_1,h,l_2,g,l_3)\mapsto (k,l_1,g,l_3)$. It follows that there is an isomorphism of bisets
$$\Def_{KLGL}^{KLHLGL}\Ind_{KHGL}^{KLHLGL}\cong \Ind_{KGL}^{KLGL}\Def_{KGL}^{KHGL},$$
which gives
\begin{align*}
\theta_L(\beta)\circ\theta_L(\alpha)&=A(\Ind_{KGL}^{KLGL})A(\Def_{KGL}^{KHGL}\Res_{KHGL}^{KHLHGL})(\beta\times\alpha)\\
&=A(\Ind_{KGL}^{KLGL})A_L(\Def_{KG}^{KHG}\Res_{KHG}^{KHHG})A(\Res_{KHHGL}^{KHLHGL})(\beta\times\alpha)\\
&=A(\Ind_{KGL}^{KLGL})A_L(\Def_{KG}^{KHG}\Res_{KHG}^{KHHG})(\beta\times^d\alpha)\\
&=A(\Ind_{KGL}^{KLGL})(\beta\circ^d\alpha)=\theta_L(\beta\circ^d\alpha),
\end{align*}
where $\circ^d$ denotes the composition in the category $\CP_{A_L}$.
This shows that $\theta_L$ is compatible with composition of morphisms. A straightforward computation shows that it maps identity morphisms to identity morphisms. This completes the proof of Assertion 2, since $\theta_L$ is obviously $R$-linear.\mpn
(3) Since the complete proof of Assertion~3 demands the verification of many technical details, we only include the full proof that $\theta_L$ is left adjoint to $\psi_L$. We next simply give the description of the bijection involved in the other direction, and leave the corresponding verifications to the reader.\par
For $G$ and $H$ in $\CD$, we have
$$\Hom_{\CP_{A_L}}\big(G,\psi_L(H)\big)=A_L\big(\psi_L(H)G\big)=A(HGL)\;\;\hbox{and}\;\;\Hom_{\CP_A}\big(\theta_L(G),H\big)=A(HGL),$$
so an obvious candidate for an isomorphism $\Hom_{\CP_{A_L}}\big(G,\psi_L(H)\big)\to \Hom_{\CP_A}\big(\theta_L(G),H\big)$ is the identity map of $A(HGL)$. To avoid confusion, for $\alpha\in \Hom_{\CP_{A_L}}\big(G,\psi_L(H)\big)$, we denote by $\widetilde{\alpha}$ the element $\alpha$ viewed as an element of $\Hom_{\CP_A}\big(\theta_L(G),H\big)$.\par
We now check that the map $\alpha\mapsto \widetilde{\alpha}$ is natural in $G$ and $H$. For naturality in $G$, if $G'\in\CD$ and $u\in \Hom_{A_L}(G',G)$, we have the diagrams
$$\xymatrix{
G\ar[r]^-\alpha&\psi_L(H)\\
G'\ar[u]^-u\ar[ur]_-{\alpha u}
}\hspace{4ex}
\xymatrix{
\theta_L(G)\ar[r]^-{\widetilde{\alpha}}&H\\
\theta_L(G')\ar[u]^-{\theta_L(u)}\ar[ur]_-{\widetilde{\alpha \circ^du}}
}
$$
and we have to show that the right-hand side diagram is commutative, i.e. that $\widetilde{\alpha\circ^d u}=\widetilde{\alpha}\circ\theta_L(u)$. But
\begin{align*}
\widetilde{\alpha}\circ\theta_L(u)&=\alpha\circ A(\Ind_{GG'L}^{GLG'L})(u)\\
&=A(\Def_{HG'L}^{HGLG'L}\Res_{HGLG'L}^{HGLGLG'L})\Big(\alpha\times A\big(\Ind_{GG'L}^{GLG'L}\big)(u)\Big)\\
&=A(\Def_{HG'L}^{HGLG'L}\Res_{HGLG'L}^{HGLGLG'L}\Ind_{HGLGG'L}^{HGLGLG'L})(\alpha\times u).
\end{align*}
In the restriction $\Res_{HGLG'L}^{HGLGLG'L}$, the inclusion $HGLG'L\hookrightarrow HGLGLG'L$ is the map
$$f:(h,g,l_1,g',l_2)\in HGLG'L\mapsto (h,g,l_1,g,l_1,g',l_2)\in HGLGLG'L,$$
and in the induction $\Ind_{HGLGG'L}^{HGLGLG'L}$, the inclusion $HGLGG'L\hookrightarrow HGLGLG'L$ is the map
$$f':(\eta,\gamma_1,\lambda_1,\gamma_2,\gamma',\lambda_2)\in HGLGG'L\mapsto (\eta,\gamma_1,\lambda_1,\gamma_2,\lambda_2,\gamma',\lambda_2)\in HGLGLG'L.$$
Then clearly $\Im(f)\Im(f')=HGLGLG'L$, and $\Im(f)\cap\Im(f')\cong HGG'L$. By the Mackey formula, this gives an isomorphism of bisets
$$\Res_{HGLG'L}^{HGLGLG'L}\Ind_{HGLGG'L}^{HGLGLG'L}\cong \Ind_{HGG'L}^{HGLG'L}\Res_{HGG'L}^{HGLGG'L},$$
where, in $\Ind_{HGG'L}^{HGLG'L}$, the inclusion $HGG'L\hookrightarrow HGLG'L$ is $(h,g,g',l)\mapsto (h,g,l,g',l)$, and in $\Res_{HGG'L}^{HGLGG'L}$, the inclusion $HGG'L\hookrightarrow HGLGG'L$ is $(h,g,g',l)\mapsto (h,g,l,g,g',l)$.\par
Now in $\Def_{HG'L}^{HGLG'L}$, the quotient map $HGLG'L\to HG'L$ sends $(h,g,l_1,g',l_2)$ to $(h,g',l_2)$, so the image of the subgroup $HGG'L$ is the whole of $HG'L$. It follows that there is an isomorphism of bisets
$$\Def_{HG'L}^{HGLG'L}\Ind_{HGG'L}^{HGLG'L}\cong\Def_{HG'L}^{HGG'L},$$
which gives finally
\begin{align*}
\widetilde{\alpha}\circ\theta_L(u)&=A(\Def_{HG'L}^{HGG'L}\Res_{HGG'L}^{HGLGG'L})(\alpha\times u)\\
&=A_L(\Def_{HG'}^{HGG'})A_L(\Res_{HGG'}^{HGGG'})A(\Res_{HGGG'L}^{HGLGG'L})(\alpha\times u)\\
&=A_L(\Def_{HG'}^{HGG'})A_L(\Res_{HGG'}^{HGGG'})(\alpha\times^du)\\
&=\alpha\circ^d u,
\end{align*}
as was to be shown.\par
We now check that the map $\alpha\mapsto\widetilde{\alpha}$ is natural in $H$. If $H'\in\CD$ and $v\in\Hom_{\CP_A}(H,H')=A(H'H)$, we have the diagrams
$$\xymatrix{
A\ar[r]^-\alpha\ar[dr]_-{\psi_L(v)\circ^d\alpha}&\psi_L(H)\ar[d]^-{\psi_L(v)}\\
&\psi_L(H')
}\hspace{4ex}
\xymatrix{
\theta_L(G)\ar[r]^-{\widetilde{\alpha}}\ar[dr]_-{\widetilde{\psi_L(v)\circ^d\alpha}}&H\ar[d]^-v\\
&H'
}
$$
and we have to show that the right-hand side diagram is commutative, i.e. that $\widetilde{\psi_L(v)\circ^d\alpha}=v\circ\widetilde{\alpha}$. But
\begin{align*}
\psi_L(v)\circ^d\alpha&=A(\Inf_{H'H}^{H'HL})(v)\circ^d\alpha=A_L(\Def_{H'G}^{H'HG}\Res_{H'HG}^{H'HHG})\Big(A(\Inf_{H'H}^{H'HL})(v)\times^d\alpha\Big)\\
&=A(\Def_{H'GL}^{H'HGL}\Res_{H'HGL}^{H'HHGL})A(\Res_{H'HHGL}^{H'HLHGL}\Inf_{H'HHGL}^{H'HLHGL})(v\times\alpha)\\
&=A(\Def_{H'GL}^{H'HGL}\Res_{H'HGL}^{H'HLHGL}\Inf_{H'HHGL}^{H'HLHGL})(v\times\alpha).
\end{align*}
In $\Res_{H'HGL}^{H'HLHGL}$, the inclusion $H'HGL\hookrightarrow H'HLHGL$ is $(h',h,g,l)\mapsto (h',h,l,h,g,l)$, and in $\Inf_{H'HHGL}^{H'HLHGL}$, the quotient map $H'HLHGL\to H'HHGL$ is $(h',h_1,l_1,h_2,g,l_2)\mapsto (h',h_1,h_2,g,l_2)$. The composition of these two maps sends $(h',h,g,l)$ to $(h',h,h,g,l)$, hence it is injective. This gives an isomorphim of bisets
$$\Res_{H'HGL}^{H'HLHGL}\Inf_{H'HHGL}^{H'HLHGL}\cong \Res_{H'HGL}^{H'HHGL},$$
from which we get
$$ \psi_L(v)\circ^d\alpha=A(\Def_{H'GL}^{H'HGL}\Res_{H'HGL}^{H'HHGL})(v\times\alpha)=v\circ\widetilde{\alpha},$$
as was to be shown. \par
Hence the isomorphism $\alpha\in \Hom_{\CP_{A_L}}\big(G,\psi_L(H)\big)\mapsto \widetilde{\alpha}\in\Hom_{\CP_A}\big(\theta_L(G),H\big)$ is natural in $G$ and $H$, so $\theta_L$ is left adjoint to $\psi_L$.\par
We now describe the bijection implying that $\theta_L$ is also right adjoint to $\psi_L$. So, for $G,H\in\CD$, we have to build an isomorphism
$$\alpha\in\Hom_{\CP_{A_L}}\big(\psi_L(G),H\big)\mapsto \widehat{\alpha}\in \Hom_{\CP_A}\big(G,\theta_L(H)\big)$$
of $R$-modules, natural in $G$ and $H$. But
$$\Hom_{\CP_{A_L}}\big(\psi_L(G),H\big)=A_L(HG)=A(HGL)\;\hbox{and}\;\Hom_{\CP_A}\big(G,\theta_L(H)\big)=A(HLG),$$
so an obvious candidate for the above isomorphism is to set $\widehat{\alpha}=A(\Iso_{HGL}^{HLG})(\alpha)$. The verification that this isomorphism is functorial in $G$ and $H$ is similar to the proof of the first adjunction, and we omit it. 
\end{proof}

\begin{defi} Let $A$ be a Green $\CD$-biset functor and $L\in\CD$. We denote by $$\Psi_L:{\rm Fun}_R(\CP_{A_L},\AMod{R})\to {\rm Fun}_R(\CP_{A},\AMod{R})$$
the functor induced by precomposition with $\psi_L$, and by 
$$\Theta_L:{\rm Fun}_R(\CP_{A},\AMod{R})\to {\rm Fun}_R(\CP_{A_L},\AMod{R})$$
the functor induced by precomposition with $\theta_L$.
\end{defi}
\begin{prop} The functors $\Psi_L$ and $\Theta_L$ are mutual left and right adjoint functors between ${\rm Fun}_R(\CP_{A_L},\AMod{R})$ and ${\rm Fun}_R(\CP_{A},\AMod{R})$.
\end{prop}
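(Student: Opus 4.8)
This is a purely formal consequence of Theorem~\ref{adjonctions-enonce}(3): precomposition with a pair of adjoint functors yields a pair of adjoint functors, with the roles of left and right adjoint interchanged. The plan is as follows.

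First I record the elementary facts that, for any $R$-linear functor $\phi:\mathcal{E}\to\mathcal{F}$ between essentially small $R$-linear categories, precomposition $\phi^{*}={-}\circ\phi$ is an $R$-linear functor ${\rm Fun}_R(\mathcal{F},\AMod R)\to{\rm Fun}_R(\mathcal{E},\AMod R)$, that $(\psi\phi)^{*}=\phi^{*}\psi^{*}$ and $\Id^{*}=\Id$, and that whiskering a natural transformation on either side by a functor is $R$-linear and obeys the usual interchange identities. By the very definitions, $\Psi_L=\psi_L^{*}$ and $\Theta_L=\theta_L^{*}$.

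Next I fix explicit units and counits for the two adjunctions provided by Theorem~\ref{adjonctions-enonce}(3): on one side the data exhibiting $\theta_L$ as a \emph{left} adjoint of $\psi_L$, with unit $\eta:\Id_{\CP_{A_L}}\Rightarrow\psi_L\theta_L$ and counit $\epsilon:\theta_L\psi_L\Rightarrow\Id_{\CP_A}$; on the other side the data exhibiting $\theta_L$ as a \emph{right} adjoint of $\psi_L$, with unit $\eta':\Id_{\CP_A}\Rightarrow\theta_L\psi_L$ and counit $\epsilon':\psi_L\theta_L\Rightarrow\Id_{\CP_{A_L}}$. To prove $\Psi_L\dashv\Theta_L$, I exhibit, for $X\in{\rm Fun}_R(\CP_{A_L},\AMod R)$ and $Y\in{\rm Fun}_R(\CP_A,\AMod R)$, the $R$-linear bijection
$$\mathrm{Nat}(\Psi_L X,Y)=\mathrm{Nat}(X\circ\psi_L,\,Y)\;\cong\;\mathrm{Nat}(X,\,Y\circ\theta_L)=\mathrm{Nat}(X,\Theta_L Y)$$
sending a natural transformation $f$ to $(f\theta_L)\circ(X\eta)$, with inverse sending $g$ to $(Y\epsilon)\circ(g\psi_L)$; the triangle identities for $(\eta,\epsilon)$ show at once that these two assignments are mutually inverse, and naturality in $X$ and $Y$ follows from the interchange law for whiskering. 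Symmetrically, using the data $(\eta',\epsilon')$ one obtains an $R$-linear bijection $\mathrm{Nat}(\Theta_L Y,X)\cong\mathrm{Nat}(Y,\Psi_L X)$, natural in $X$ and $Y$, which is precisely the statement $\Theta_L\dashv\Psi_L$. Together these prove the proposition.

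There is no genuinely hard step here; the construction is entirely formal. The only points deserving a moment of care are that the Hom-$R$-modules of the functor categories and the displayed bijections are compatible with the $R$-linear structure --- which is automatic, since $\psi_L$ and $\theta_L$ are $R$-linear functors and whiskering is $R$-bilinear --- and the bookkeeping of which of the two adjunctions of Theorem~\ref{adjonctions-enonce}(3) yields $\Psi_L\dashv\Theta_L$ and which yields $\Theta_L\dashv\Psi_L$. Alternatively, and perhaps most cleanly, one may simply invoke the $2$-functoriality of ${\rm Fun}_R({-},\AMod R)$ on $R$-linear categories, which carries every adjunction to an adjunction with left and right adjoints interchanged, and apply it to the two adjunctions of Theorem~\ref{adjonctions-enonce}(3).
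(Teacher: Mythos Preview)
Your proof is correct and follows exactly the same approach as the paper, which simply says ``This follows from Theorem~\ref{adjonctions-enonce}, by standard category theory.'' You have merely spelled out in detail the well-known fact that precomposition with an adjoint pair yields an adjoint pair with the handedness reversed.
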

\begin{proof} This follows from Theorem~\ref{adjonctions-enonce}, by standard category theory.
\end{proof}

\begin{rem} Using the above equivalences of categories between ${\rm Fun}_R(\CP_{A},\AMod{R})$ and $\AMod{A}$, and ${\rm Fun}_R(\CP_{A_L},\AMod{R})$ and $\AMod{A_L}$, we will consider $\Psi_L$ as a functor from $\AMod{A_L}$ to $\AMod{A}$ and $\Theta_L$ as a functor from $\AMod{A}$ to $\AMod{A_L}$. One can check that, from this point of view, if $N$ is an $A_L$-module, then $\Psi_L(N)$ is the $A$-module defined as follows:
\begin{itemize}
\item If $G\in\CD$, then $\Psi_L(N)(G)=N(G)$.
\item If $G,H\in\CD$, $a\in A(G)$ and $v\in N(H)$, then
$$a\times v=A(\Inf_G^{G\times L})(a)\times^dv\,$$
where $\times^d$ denotes the action of $A_L$ on $N$, and $A(\Inf_G^{G\times L})(a)\in A(G\times L)$ is viewed as an element of $A_L(G)$.
\end{itemize}
Conversely, if $M$ is an $A$-module, then $\Theta_L(M)$ is the $A_L$-module defined as follows:
\begin{itemize}
\item If $G\in\CD$, then $\Theta_L(M)(G)=M(G\times L)$.
\item If $G,H\in\CD$, $a\in A_L(G)$ and $m\in M(H\times L)$, then
$$a\times^dm=M(\Res_{G\times H\times L}^{G\times L\times H\times L})(a\times m),$$
where $a\times m$ is the product of $a\in A(G\times L)$ and $m\in M(H\times L)$, and $H\times G\times L$ is viewed as a subgroup of $G\times L\times H\times L$ via the map $(g,h,l)\mapsto (g,l,h,l)$.
\end{itemize}
\end{rem}

\begin{teo} Let $A$ be a Green $\CD$-biset functor, and $L\in \CD$. The endofunctor $\rho_L$ of~$\cp_A$ is isomorphic to $\theta_L\circ\psi_L$ and so the endofunctor $\Psi_L\circ\Theta_L$ of $\AMod{A}$ is isomorphic to the Yoneda-Dress functor $\Id_L$. In particular $\Id_L$ is self adjoint.
\end{teo}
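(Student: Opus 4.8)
The strategy is to check that $\theta_L\circ\psi_L$ coincides with the endofunctor $\rho_L$ of $\cp_A$ --- in fact literally equal, which is slightly stronger than the asserted isomorphism --- by comparing the two functors on objects and on morphisms. Once this is done, the statement about module categories follows formally, from the description of $\Psi_L$ and $\Theta_L$ as precomposition functors, the equivalence $\AMod{A}\simeq\mathrm{Fun}_R(\cp_A,\AMod R)$, and the adjunction of the preceding Proposition.

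On objects this is immediate: for $G\in\CD$ one has $(\theta_L\circ\psi_L)(G)=\theta_L(G)=G\times L=\rho_L(G)$. On morphisms, fix $\alpha\in A(H\times G)=\Hom_{\cp_A}(G,H)$. By definition $\psi_L(\alpha)=A(\Inf_{HG}^{HGL})(\alpha)$, regarded as an element of $A_L(HG)=A(HGL)=\Hom_{\cp_{A_L}}(G,H)$, and then, using the definition of $\theta_L$ together with functoriality of $A$,
\[
(\theta_L\circ\psi_L)(\alpha)=A(\Ind_{HGL}^{HLGL})\,A(\Inf_{HG}^{HGL})(\alpha)=A\big(\Ind_{HGL}^{HLGL}\,\Inf_{HG}^{HGL}\big)(\alpha),
\]
which is exactly the expression for $\rho_L(\alpha)$ recorded in the ``straightforward computation'' just after the definition of $\rho_L$. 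The one thing to verify is that the underlying group homomorphisms match: in $\theta_L$ the relevant inclusion is $HGL\hookrightarrow HLGL$, $(h,g,l)\mapsto(h,l,g,l)$, and in $\psi_L$ the inflation is along the projection $HGL\to HG$, $(h,g,l)\mapsto(h,g)$, and these are precisely the maps appearing in the formula for $\rho_L$. A routine check also shows that both functors send identity morphisms to identity morphisms. Hence $\theta_L\circ\psi_L=\rho_L$.

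For the module categories, recall that $\Theta_L$ is precomposition with $\theta_L$ and $\Psi_L$ is precomposition with $\psi_L$; therefore $\Psi_L\circ\Theta_L$ is precomposition with $\theta_L\circ\psi_L$, i.e. with $\rho_L$ by the previous step. Under the equivalence $\AMod{A}\simeq\mathrm{Fun}_R(\cp_A,\AMod R)$ recalled in Section~\ref{Amod}, the Yoneda--Dress functor $\Id_L$ is by construction the functor induced by precomposition with $\rho_L$. Hence $\Psi_L\circ\Theta_L\cong\Id_L$.

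Finally, self-adjointness of $\Id_L$ is a formal consequence: by the Proposition preceding the statement, $\Theta_L$ is at once a left and a right adjoint of $\Psi_L$, so by the standard rule that the left adjoint of a composite $P\circ Q$ is $Q^\ell\circ P^\ell$ (and dually for right adjoints), the left adjoint of $\Psi_L\circ\Theta_L$ is $\Theta_L^\ell\circ\Psi_L^\ell=\Psi_L\circ\Theta_L$, and likewise for the right adjoint; thus $\Psi_L\circ\Theta_L$, and hence $\Id_L$, is self-adjoint. I expect the only genuinely delicate part of the argument to be the morphism-level bookkeeping in the second paragraph --- making sure the embeddings and projections among $HG$, $HGL$ and $HLGL$ used in $\psi_L$, $\theta_L$ and $\rho_L$ are truly identical --- while everything else is functoriality plus formal category theory.
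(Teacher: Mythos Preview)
Your argument is correct and follows the same route as the paper's own proof, which simply asserts that one ``checks readily'' that $\rho_L\cong\theta_L\circ\psi_L$ and then invokes Theorem~\ref{adjonctions-enonce}. You have in fact shown the slightly sharper statement $\rho_L=\theta_L\circ\psi_L$ by matching the formulas $A(\Ind_{HGL}^{HLGL}\Inf_{HG}^{HGL})(\alpha)$ on morphisms, and your derivation of self-adjointness via the composite-adjoint rule is exactly the ``standard category theory'' the paper alludes to.
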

\begin{proof} One checks readily that $\rho_L$ is isomorphic to the composition $\theta_L\circ\psi_L$. The other assertions follow by Theorem~\ref{adjonctions-enonce}, as the Yoneda-Dress functor $\Id_L$ is obtained by precomposition with $\rho_L={-}\times L$.
\end{proof}

We observe that the $L$-shift of the $A$-module $A$ is the representable functor $A(-,L)$ of the category $\cp_A$, so it is projective. More generally, the $L$-shift of the representable functor $A(-,X)$ is the representable functor $A(-,L\times X)$. Hence the Yoneda-Dress construction maps a representable functor to a representable functor. 

\section{The commutant}\label{commutant}

\begin{defi}
Let $A$ be a Green {$\CD$-}biset functor. 
\begin{enumerate}
\item For $G,H\in\CD$, we say that an element $a\in A(G)$ and an element $b\in A(H)$ {\em commute} if 
$$a\times b=A(\Iso_{H\times G}^{G\times H})(b\times a).$$
\item For a group $G$ {in $\CD$}, we denote by $CA(G)$ the set of elements of $A(G)$ which commute with any element of $A(H)$, for any $H\in\CD$, i.e.
\begin{displaymath}
\{a\in A(G)\mid {\forall H\in\CD, \,\forall b\in A(H), \;a\times b=A(\Iso_{H\times G}^{G\times H})(b\times a)}\},
\end{displaymath}
and call it {\em the commutant} of $A$ {at} $G$.
\end{enumerate}
\end{defi}

Observe that $CA(G)$ is an $R$-submodule of $A(G)$, since the product $\times$ is bilinear.

\begin{lema} Let $A$ be a Green {$\CD$-}biset functor. Then the commutant of $A$ is a Green {$\CD$-}biset subfunctor of $A$.
\end{lema}
\begin{proof}
To see it is a biset functor, let $Y$ be a $(K,\, G)$-biset {for groups $K$ and $G$ in $\CD$}, and $a$ be in $CA(G)$. If $b$ is in $A(H)$ for a given group $H$ {in $\CD$}, we have
\begin{displaymath}
A(Y)(a)\times b=A\left((Y\times H)\circ\Iso_{H\times G}^{G\times H}\right)(b\times a)
\end{displaymath}
where $Y\times H$ is seen as a $(K\times H,\, G\times H)$-biset. If we show that $(Y\times H)\circ\Iso_{H\times G}^{G\times H}$ is isomorphic to $\Iso_{H\times K}^{K\times H}\circ (H\times Y)$, where $H\times Y$ is seen as a $(H\times K,\, H\times G)$-biset, the right-hand side of the equality above will be equal to 
\begin{displaymath}
A(\Iso_{H\times K}^{K\times H})\big(b\times A(Y)(a)\big),
\end{displaymath}
which is what we want. Now, $\Iso_{H\times G}^{G\times H}$ is the group $H\times G$, seen as a $(G\times H,\, H\times G)$-biset, and $\Iso_{H\times K}^{K\times H}$ is the group $H\times K$, seen as a $(K\times H,\, H\times K)$-biset. So, it is not hard to see that $(Y\times H)\circ\Iso_{H\times G}^{G\times H}$ is isomorphic to $Y\times H$ as $(K\times H,\, H\times G)$-biset, where the right action of $H\times G$ is given by $(y,\, h)(h_1,\, g_1)=(yg_1,\, hh_1)$. Similarly, $\Iso_{H\times K}^{K\times H}\circ (H\times Y)$ is isomorphic to $H\times Y$ as $(K\times H,\, H\times G)$-set, where the left action of $K\times H$ is given by $(k_1,\, h_1)(h,\, y)=(h_1h,\, k_1y)$. Hence, it is easy to verify that the map $Y\times H\rightarrow H\times Y$ sending $(y,\, h)$ to $(h,\, y)$ defines an isomorphism between these two bisets.

To see that $CA$ is closed under the product $\times$, let $a$ be in $CA(G)$, $b$ be in $CA(H)$ and $c$ be in $A(K)$. We have
\begin{displaymath}
a\times (b\times c)=a\times A(\Iso_{K\times H}^{H\times K})(c\times b), 
\end{displaymath}
which is clearly equal to $A(\Iso_{G\times K\times H}^{G\times H\times K})(a\times c\times b)$. Similarly
\begin{displaymath}
(a\times c)\times b=A(\Iso_{K\times G\times H}^{G\times K\times H})(c\times a\times b).
\end{displaymath}
 Finally, clearly we have
\begin{displaymath}
\Iso_{G\times K\times H}^{G\times H\times K}\circ\Iso_{K\times G\times H}^{G\times K\times H}= 
\Iso_{K\times G\times H}^{G\times H\times K},
\end{displaymath} 
 which {yields} the first equality
\begin{displaymath}
(a\times b)\times c=A(\Iso_{K\times G\times H}^{G\times H\times K})\left(c\times (a\times b)\right).
\end{displaymath} 
To finish the proof, it is clear that the identity element $\varepsilon\in A(1)$ belongs to $CA(1)$.
\end{proof}

\begin{coro} \label{bisets commute}Let $A$ be a Green $\CD$-biset functor. Then the image of the unique Green biset functor morphism $\upsilon_A:RB\to A$ is contained in $CA$.
\end{coro}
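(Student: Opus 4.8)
The plan is to combine two ingredients that are already available: the preceding lemma, which says that $CA$ is a Green $\CD$-biset subfunctor of $A$ (and in particular that $CA$ is closed under the action of bisets and that $\varepsilon_A\in CA(1)$), and the explicit description of $\upsilon_A$ recalled in Example~\ref{ejemplos}, namely that $\upsilon_{A,G}$ sends a finite $G$-set $X$ to $A({}_GX_1)(\varepsilon_A)$, where ${}_GX_1$ is $X$ regarded as a $(G,1)$-biset.

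First I would observe that, since $\varepsilon_A$ belongs to $CA(1)$ and $CA$ is a biset subfunctor of $A$, for every $G\in\CD$ and every $(G,1)$-biset $Y$ — viewed as a morphism $1\to G$ in $R\CD$ — the element $A(Y)(\varepsilon_A)$ lies in $CA(G)$. Now the image of $\upsilon_{A,G}$ is the $R$-submodule of $A(G)$ generated by the elements $A({}_GX_1)(\varepsilon_A)$ as $X$ runs over the finite $G$-sets, because $RB(G)$ is generated over $R$ by the classes of $G$-sets and $\upsilon_{A,G}$ is $R$-linear. Each such generator lies in $CA(G)$ by the previous observation, and $CA(G)$ is an $R$-submodule of $A(G)$, so the whole image of $\upsilon_{A,G}$ is contained in $CA(G)$, as desired.

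A more conceptual variant, which I would mention as well, is the following: since $CA$ is itself an object of $\mathrm{Green}_{\CD,R}$ by the preceding lemma, and $RB$ is an initial object of this category (Proposition 8.6.1 of~\cite{biset}), there is a unique morphism of Green biset functors $RB\to CA$; composing it with the inclusion $CA\hookrightarrow A$ yields a morphism $RB\to A$ of Green biset functors, which must coincide with $\upsilon_A$ by the uniqueness part of the universal property. Hence $\upsilon_A$ factors through $CA$, and in particular $\Im\upsilon_A\subseteq CA$.

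There is no real obstacle here. The only point requiring a little care is that in the definition of the commutant the element $b$ ranges over \emph{all} of $A(H)$, not merely over the image of $\upsilon_A$; but this is precisely what is taken care of by the fact, proved in the preceding lemma, that $CA$ is a biset subfunctor of $A$ (so that applying a $(G,1)$-biset to $\varepsilon_A$ keeps us inside the commutant).
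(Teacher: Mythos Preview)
Your proposal is correct. In fact, your ``more conceptual variant'' is precisely the argument the paper gives: since $RB$ is initial in $\mathrm{Green}_{\CD,R}$ and $CA$ is a Green biset subfunctor of $A$, the unique morphism $\upsilon_{CA}:RB\to CA$ composed with the inclusion $CA\hookrightarrow A$ must equal $\upsilon_A$, whence $\Im\upsilon_A\subseteq CA$.

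Your first argument is a legitimate, more explicit alternative: it unpacks the initiality of $RB$ by using the concrete formula $\upsilon_{A,G}(X)=A({}_GX_1)(\varepsilon_A)$ together with the closure of $CA$ under biset actions and the fact that $\varepsilon_A\in CA(1)$. This buys you a proof that does not invoke the universal property as a black box, at the cost of a few more lines; the paper's route trades that explicitness for a one-line diagram chase.
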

\begin{proof} Indeed, by uniqueness of $\upsilon_A$ and $\upsilon_{CA}$, the diagram
$$\xymatrix@C=3ex@R=3ex{
&\makebox[0pt]{\raisebox{1ex}{$CA$}}\ar@{^{(}->}[dr]\;&\\
RB\ar[ur]^-{\upsilon_{CA}}\ar[rr]_-{\upsilon_A}&&A
}
$$ is commutative.
\end{proof}

\begin{defi}
We will say that a Green $\CD$-biset functor $A$ is {\em commutative} if $A=CA$.
\end{defi}

 It is easy to see that $CA$ is commutative. All the examples considered in Example~\ref{ejemplos} are commutative Green biset functors.

If $A$ is commutative, then clearly $A_G$ is commutative for any $G${. More} generally we have the following result.

\begin{prop}
\label{commsft}
Let $A$ be a Green {$\CD$-}biset functor and ${G\in\CD}$. {Then} $CA_G=(CA)_G$.
\end{prop}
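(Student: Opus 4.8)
The plan is to establish the equality $CA_G=(CA)_G$ of Green $\CD$-biset subfunctors of $A_G$ objectwise. Both sides are indeed subfunctors of $A_G$: the first because the commutant of any Green functor is a Green subfunctor, the second because $CA\hookrightarrow A$ is a morphism of Green functors and the Yoneda--Dress construction is exact. Since a subfunctor is determined by its values on objects, it suffices to prove, for each $H\in\CD$, that $CA_G(H)=(CA)_G(H)$ as $R$-submodules of $A_G(H)=A(H\times G)$, where $(CA)_G(H)=CA(H\times G)$. The two sides are commutation conditions on an element $a\in A(H\times G)$, but with respect to different data: $a\in (CA)_G(H)$ means that $a$ commutes in $A$ with every element of $A(M)$, $M\in\CD$, whereas $a\in CA_G(H)$ means that $a$ commutes in $A_G$, i.e. with respect to the diagonal product $\times^d$ and the functoriality $A_G(\varphi)=A(\varphi\times G)$, with every element of $A_G(K)=A(K\times G)$, $K\in\CD$. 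The work consists in translating between the two, using the formula $x\times^d y=A\big(\Iso_D^{HKG}\Res^{HGKG}_D\big)(x\times y)$, the functoriality identity $A(\varphi\times\psi)(x\times y)=A(\varphi)(x)\times A(\psi)(y)$, and closure of $\CD$ under direct products (so $K\times G\in\CD$ whenever $K\in\CD$).

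For the inclusion $(CA)_G\subseteq CA_G$, fix $a\in CA(H\times G)$, $K\in\CD$ and $\beta\in A(K\times G)$. As $K\times G\in\CD$, the elements $a$ and $\beta$ commute in $A$, say $a\times\beta=A(\kappa)(\beta\times a)$ with $\kappa$ the isomorphism exchanging the factors $K\times G$ and $H\times G$. Substituting this into the definition of $\times^d$ rewrites $a\times^d\beta$ as $A$ applied to $\beta\times a$ along the biset $\big(\Iso_D^{HKG}\Res^{HGKG}_D\big)\circ\kappa$, while expanding $A_G(\Iso_{KH}^{HK})(\beta\times^d a)=A\big((\Iso_{KH}^{HK})\times G\big)(\beta\times^d a)$ rewrites it as $A$ applied to $\beta\times a$ along $\big((\Iso_{KH}^{HK})\times G\big)\circ\big(\Iso_{D'}^{KHG}\Res^{KGHG}_{D'}\big)$. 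A coordinate computation shows that these two $(HKG,KGHG)$-bisets are isomorphic, the isomorphism being the exchange of the two blocks $H\times G$ and $K\times G$; hence the two expressions agree, and since $K$ and $\beta$ were arbitrary, $a\in CA_G(H)$.

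For the inclusion $CA_G\subseteq(CA)_G$, fix $a\in CA_G(H)$, $M\in\CD$ and $b\in A(M)$; we must show $a\times b=A(\iota)(b\times a)$, where $\iota$ exchanges the factors $M$ and $H\times G$. Apply the hypothesis to $\beta:=A(\Inf_M^{M\times G})(b)\in A_G(M)$, the image of $b$ under the inflation morphism $A\to A_G$: this gives $a\times^d\beta=A_G(\Iso_{MH}^{HM})(\beta\times^d a)$. Using $a\times\beta=A(\Id_{HG}\times\Inf_M^{MG})(a\times b)$ and $\beta\times a=A(\Inf_M^{MG}\times\Id_{HG})(b\times a)$ and unwinding $\times^d$, this becomes $A(\Phi)(a\times b)=A(\Phi'')(b\times a)$ in $A(H\times M\times G)$, where $\Phi=\big(\Iso_D^{HMG}\Res^{HGMG}_D\big)\circ\big(\Id_{HG}\times\Inf_M^{MG}\big)$ and $\Phi''=\big((\Iso_{MH}^{HM})\times G\big)\circ\big(\Iso_{D''}^{MHG}\Res^{MGHG}_{D''}\big)\circ\big(\Inf_M^{MG}\times\Id_{HG}\big)$. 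The key point is that in $\Phi$ the inflation along $M\times G\to M$ cancels the restriction to the diagonal, so that $\Phi\cong\Iso_{HGM}^{HMG}$, which is an isomorphism; likewise $\Phi''\cong\Iso_{MHG}^{HMG}$. Applying $A(\Phi)^{-1}=A(\Iso_{HMG}^{HGM})$ to the displayed equality then yields $a\times b=A(\Iso_{MHG}^{HGM})(b\times a)=A(\iota)(b\times a)$, so $a\in CA(H\times G)=(CA)_G(H)$.

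The only real difficulty lies in the biset bookkeeping of these two computations. No Mackey formula or non-free biset is involved: every composite is built from isomorphisms, restrictions to a diagonal subgroup, and (in the second direction) one inflation, so the arguments reduce to tracking coordinates. Care is needed in choosing the intertwining bijections --- in the first direction it is the exchange of the blocks $H\times G$ and $K\times G$, not a mere transposition of the $H$- and $K$-coordinates --- and, in the second direction, in recognising the cancellation of an inflation against a diagonal restriction, which is exactly what makes $A(\Phi)$ invertible and so lets a commutation relation in $A_G$ descend to one in $A$.
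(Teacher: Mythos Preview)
Your proof is correct and follows essentially the same approach as the paper's: both directions are argued objectwise, the inclusion $(CA)_G\subseteq CA_G$ by substituting the $A$-commutation of $a$ with an arbitrary element of $A(K\times G)$ into the definition of $\times^d$, and the reverse inclusion by testing the $A_G$-commutation against the inflated element $A(\Inf_M^{M\times G})(b)$ and using the key cancellation $\Res^{HGMG}_{HM\Delta(G)}\circ\Inf_{HGM}^{HGMG}\cong\Iso$ to recover an invertible biset. Your write-up is somewhat more explicit about why this cancellation matters (it makes $A(\Phi)$ invertible so that the relation can be pushed back down to $A$), but the argument is the same.
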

\begin{proof}
Observe that $CA_G$ and $(CA)_G$ are both Green {$\CD$-}biset subfunctors of $A_G$, so to prove they are equal as Green {$\CD$-}biset functors, it suffices to prove that for every group ${H\in\CD}$, we have $(CA)_G(H){=} CA_G(H)$. \par
{To prove that $(CA)_G(H)\subseteq CA_G(H)$, we choose a group $K$ in $\CD$, and elements $a\in(CA)_G(H)$ and $b\in A_G(K)$. We} must prove that
\begin{displaymath}
a\times^d b=A_G(\Iso_{K\times H}^{H\times K})(b\times^da). 
\end{displaymath}
{We have}
\begin{displaymath}
a\times^db=A\left(\Res^{H\times G\times K\times G}_{H\times K\times \Delta(G)}\right)(a\times b)\quad\textrm{and}\quad b\times^da=A\left(\Res^{K\times G\times H\times G}_{K\times H\times \Delta(G)}\right)(b\times a).
\end{displaymath}
Now, by definition $(CA)_G(H)=CA(H\times G)$,  so the element $a$ satisfies
\begin{displaymath}
a\times b=A\left(\Iso_{K\times G\times H\times G}^{H\times G\times K\times G}\right)(b\times a).
\end{displaymath}
Substituting this in the above equation on the left we easily obtain what we wanted.

{To prove the reverse inclusion $CA_G(H)\subseteq (CA)_G(H)$, we now let $a\in CA_G(H)$ and $b\in A(K)$,} and consider $c=A(\Inf_K^{K\times G})(b)$. Then we have 
\begin{displaymath}
a\times^d c=A_G(\Iso_{K\times H}^{H\times K})(c\times^da),
\end{displaymath}
and clearly
\begin{displaymath}
a\times^dc=A\left(\Res^{H\times G\times K\times G}_{H\times K\times \Delta(G)}\circ\Inf_{H\times G\times K}^{H\times G\times K\times G}\right)(a\times {b}).
\end{displaymath}
But it is easy to see (for example from Section 1.1.3 of \cite{biset}) that
\begin{displaymath}
\Res^{H\times G\times K\times G}_{H\times K\times \Delta(G)}\circ\Inf_{H\times G\times K}^{H\times G\times K\times G}\cong \Iso_{H\times G\times K}^{H\times K\times \Delta(G)}.
\end{displaymath}
By doing a similar transformation with $c\times^da$, and applying the corresponding isomorphisms, we easily obtain what we wanted.
\end{proof}

\begin{lema}
Let $A$ be a Green $\CD$-biset functor. Then for any group $G$ {in $\CD$}, the commutant $CA(G)$ is a subring of $Z{\big(}A(G){\big)}$.
\end{lema}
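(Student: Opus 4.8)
The plan is to prove the two claims — that $CA(G)$ is commutative as a ring, and that it embeds in $Z\big(A(G)\big)$ — essentially as consequences of the definition of the ring structure in Lemma~\ref{defeq} combined with the commutation property defining $CA(G)$. Recall that for $a,b\in A(G)$ the ring product is
$$a\cdot b=A\left(\Iso_{\Delta(G)}^G\circ\Res^{G\times G}_{\Delta(G)}\right)(a\times b).$$
The point is that the biset $\Iso_{\Delta(G)}^G\circ\Res^{G\times G}_{\Delta(G)}$ precomposed with $\Iso_{G\times G}^{G\times G}$ (the swap) is isomorphic to itself: since $\Delta(G)$ is stable under the swap $(g_1,g_2)\mapsto(g_2,g_1)$ of $G\times G$, and the two isomorphisms $\Delta(G)\to G$ differ by an inner automorphism (in fact they agree), one checks readily that
$$\big(\Iso_{\Delta(G)}^G\circ\Res^{G\times G}_{\Delta(G)}\big)\circ\Iso_{G\times G}^{G\times G}\cong \Iso_{\Delta(G)}^G\circ\Res^{G\times G}_{\Delta(G)}.$$

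First I would record this biset identity, which is the only genuinely ``geometric'' input; everything else is formal. Then, given $a\in CA(G)$ and $b\in A(G)$, we have by definition $a\times b=A(\Iso_{G\times G}^{G\times G})(b\times a)$ (here $H=G$ in the definition of the commutant), and applying $A\big(\Iso_{\Delta(G)}^G\circ\Res^{G\times G}_{\Delta(G)}\big)$ to both sides and using functoriality together with the biset identity above yields $a\cdot b=b\cdot a$. Taking $b$ also in $CA(G)$ shows $CA(G)$ is commutative; taking $b$ arbitrary in $A(G)$ shows $CA(G)\subseteq Z\big(A(G)\big)$. That $CA(G)$ is a subring — closed under addition (it is an $R$-submodule, already noted), under multiplication (it is closed under $\times$ by the previous lemma, hence under $\cdot$ which is $\times$ composed with a functorial map, and one must check the product of two commuting elements again commutes with everything, which follows since $CA$ is a subfunctor closed under $\times$ and the unit lies in $CA(1)$), and contains the unit of $A(G)$ (which is $A(\Inf^{G}_1)(\varepsilon)$, and $\varepsilon\in CA(1)$, so its image under the subfunctor $CA$ lies in $CA(G)$) — is then immediate.

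The only step requiring care is the biset-isomorphism identity $\big(\Iso_{\Delta(G)}^G\circ\Res^{G\times G}_{\Delta(G)}\big)\circ\Iso_{G\times G}^{G\times G}\cong \Iso_{\Delta(G)}^G\circ\Res^{G\times G}_{\Delta(G)}$; this is the analogue, inside the ring $A(G)$, of the statement that the diagonal embedding is symmetric under swapping the two factors. It is a routine check using the standard composition formulas for elementary bisets (as in Section~1.1.3 of \cite{biset}), but it is the crux: once it is in hand, the commutativity of $CA(G)$ and the inclusion into $Z\big(A(G)\big)$ drop out by applying $A$ to both sides and invoking Definition~\ref{defgreen} and Lemma~\ref{defeq}. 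I do not anticipate any obstacle beyond keeping the identifications $1\times G\cong G$, $\Delta(G)\cong G$ straight.
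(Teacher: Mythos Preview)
Your proposal is correct and follows essentially the same approach as the paper: the key step in both is the biset identity $\big(\Iso_{\Delta(G)}^G\circ\Res^{G\times G}_{\Delta(G)}\big)\circ\Iso(\sigma_G)\cong \Iso_{\Delta(G)}^G\circ\Res^{G\times G}_{\Delta(G)}$, where $\sigma_G$ swaps the two factors of $G\times G$, applied after invoking the commutation property $a\times b=A(\Iso_{G\times G}^{G\times G})(b\times a)$. You spell out the subring-closure verifications (unit, products) in slightly more detail than the paper, which simply appeals to $CA$ being a Green biset subfunctor, but the argument is the same.
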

\begin{proof}
Take $a\in CA(G)$ and $b\in A(G)$, then
\begin{eqnarray*}
a \cdot b &=& A\left(\Iso_{\Delta(G)}^G\circ\Res^{G\times G}_{\Delta(G)}\right)(a\times b)\\
 & = & A\left(\Iso_{\Delta(G)}^G\circ\Res^{G\times G}_{\Delta(G)}\circ{\Iso(\sigma_G)}\right)(b\times a)\\
 & = & A\left(\Iso_{\Delta(G)}^G\circ\Res^{G\times G}_{\Delta(G)}\right)(b\times a)=b \cdot a,
\end{eqnarray*}
{where $\sigma_G$ is the automorphism of $G\times G$ switching the components.} Since $CA(G)$ and $Z(A(G))$ have the same ring structure, inherited from the Green {$\CD$-}biset functor structure of $A$, this shows that $CA(G)$ is a subring of $Z{\big(}A(G){\big)}$.
\end{proof}

\begin{rem} It is not hard to see then that $A$ is a commutative Green biset functor if and only if for every group $G$, the ring $A(G)$ is a commutative ring.
\end{rem}
We now answer the question raised in Remark~\ref{bifunctor}.
\begin{prop} \label{commute} Let $A$ be a Green $\CD$-biset functor, and $G,H,K,L\in \CD$. Let $\alpha\in A(HG)$ and $\beta\in A(LK)$. Then the square
$$\xymatrix{
G\times K\ar[r]^-{G\times\beta}\ar[d]_-{\alpha\times K}&G\times L\ar[d]^{\alpha\times L}\\
H\times K\ar[r]_-{H\times\beta}&H\times L
}
$$
commutes in $\cp_A$ if and only if $\alpha$ and $\beta$ commute.
\end{prop}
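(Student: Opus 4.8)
The plan is to compute each of the two composite morphisms of the square directly in $\cp_A$ and to recognise it as a ``reshuffling'' isomorphism applied to $\alpha\times\beta$, respectively to $\beta\times\alpha$. Concretely, I claim the two identities
$$(\alpha\times L)\circ(G\times\beta)=A\big(\Iso_{HGLK}^{HLGK}\big)(\alpha\times\beta)\quad\text{and}\quad(H\times\beta)\circ(\alpha\times K)=A\big(\Iso_{LKHG}^{HLGK}\big)(\beta\times\alpha),$$
where $\Iso_{HGLK}^{HLGK}$ and $\Iso_{LKHG}^{HLGK}$ are the bisets of the canonical isomorphisms $(h,g,l,k)\mapsto(h,l,g,k)$ and $(l,k,h,g)\mapsto(h,l,g,k)$. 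Granting these, the proposition follows formally. Indeed, $\Iso_{HGLK}^{HLGK}$ comes from a group isomorphism, so $A\big(\Iso_{HGLK}^{HLGK}\big)$ is an $R$-linear isomorphism $A(HGLK)\to A(HLGK)$ with inverse $A\big(\Iso_{HLGK}^{HGLK}\big)$; applying this inverse, the square commutes if and only if
$$\alpha\times\beta=A\big(\Iso_{HLGK}^{HGLK}\circ\Iso_{LKHG}^{HLGK}\big)(\beta\times\alpha)=A\big(\Iso_{LKHG}^{HGLK}\big)(\beta\times\alpha),$$
the last equality because $\Iso_{HLGK}^{HGLK}\circ\Iso_{LKHG}^{HLGK}$ is the biset of the composite isomorphism $(l,k,h,g)\mapsto(h,g,l,k)$; and this final condition is exactly the statement that $\alpha$ and $\beta$ commute.

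To establish the first identity I would unfold everything. Substituting the explicit formulas $\alpha\times L=A\big(\Ind_{HGL}^{HLGL}\Inf_{HG}^{HGL}\big)(\alpha)$ and $G\times\beta=A\big(\Ind_{GLK}^{GLGK}\Inf_{LK}^{GLK}\big)(\beta)$ recalled in the preliminaries, and using functoriality of the external product (Definition~\ref{defgreen}(3)), one has $(\alpha\times L)\times(G\times\beta)=A\big((\Ind_{HGL}^{HLGL}\Inf_{HG}^{HGL})\times(\Ind_{GLK}^{GLGK}\Inf_{LK}^{GLK})\big)(\alpha\times\beta)$; composing with the deflation--restriction biset from the composition law of $\cp_A$ (Definition~\ref{PA}) then exhibits $(\alpha\times L)\circ(G\times\beta)$ as $A(D_1)(\alpha\times\beta)$ for an explicit composite biset $D_1$ from $HGLK$ to $HLGK$. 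One simplifies $D_1$ in the same spirit as the proof of Theorem~\ref{adjonctions-enonce}: the external product of two $\Ind\,\Inf$-bisets is first rewritten as a single induction after a single inflation; the resulting $\Res\circ\Ind$ is evaluated by the Mackey formula, where one checks that the two subgroups involved have product equal to the whole ambient group and intersection a diagonal copy of $H\times G\times L\times K$, so that a single double coset contributes; the leftover $\Res\circ\Inf$ along a subgroup mapping isomorphically onto the pertinent quotient collapses to an identity biset; and finally $\Def\circ\Ind$ along a subgroup that projects isomorphically onto its quotient collapses to an isomorphism biset, whose underlying isomorphism of groups one reads off to be $(h,g,l,k)\mapsto(h,l,g,k)$. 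The second identity follows from the symmetric computation, with $H$ and $K$ playing the roles of $G$ and $L$ in the diagonal and deflation; it gives $(H\times\beta)\circ(\alpha\times K)=A(D_2)(\beta\times\alpha)$ with $D_2\cong\Iso_{LKHG}^{HLGK}$.

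The main obstacle is exactly this computational core: carrying out the biset reductions while keeping track of every embedding and every quotient homomorphism, and verifying that no stray inflation or deflation is left over. I note that the ``if'' direction needs less than the full identities: it is enough to observe that $D_1$ coincides with $D_2$ precomposed with the swap biset $\Iso_{HGLK}^{LKHG}$, which one can see without pinning down the isomorphisms explicitly. The ``only if'' direction, by contrast, really uses that $D_1$ (equivalently $D_2$) is a bijective biset, so that $A(D_2)$ is injective and the equality $A(D_1)(\alpha\times\beta)=A(D_2)(\beta\times\alpha)$ can be cancelled down to the commutation relation $\alpha\times\beta=A\big(\Iso_{LKHG}^{HGLK}\big)(\beta\times\alpha)$.
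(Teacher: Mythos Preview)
Your proposal is correct and follows essentially the same approach as the paper: both compute $(\alpha\times L)\circ(G\times\beta)=A(\Iso_{HGLK}^{HLGK})(\alpha\times\beta)$ and $(H\times\beta)\circ(\alpha\times K)=A(\Iso_{LKHG}^{HLGK})(\beta\times\alpha)$ by unwinding the composition in $\cp_A$ and simplifying the resulting biset via the Mackey formula and standard relations, then deduce the equivalence with commutation by composing with the inverse isomorphism. Your description of the biset reduction (product of subgroups is the ambient group, intersection is a diagonal copy of $HGLK$, so a single double coset contributes) is exactly what the paper abbreviates as ``standard relations in the composition of bisets'' and ``tedious but straightforward calculations''; your additional remark distinguishing what the ``if'' and ``only if'' directions actually require is a nice clarification not made explicit in the paper.
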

\begin{proof} Let $u=(\alpha\times L)\circ(G\times\beta)$. By definition
\begin{align*}
u&= A(\Ind_{HGL}^{HLGL}\Inf_{HG}^{HGL})(\alpha)\circ A(\Ind_{GLK}^{GLGK}\Inf_{LK}^{GLK})(\beta)\\
&= A(\Def_{HLGK}^{HL\sou{GL}GK}\Res_{HL\sou{GL}GK}^{HL\sou{GLGL}GK})\big(A(\Ind_{HG\sou{L}}^{H\sou{L}G\sou{L}}\Inf_{HG}^{HG\sou{L}})(\alpha)\times A(\Ind_{\sou{G}LK}^{\sou{G}L\sou{G}K}\Inf_{LK}^{\sou{G}LK})(\beta)\big),
\end{align*}
where the notation $\Def_{HLGK}^{HL\sou{GL}GK}$ means the deflation with respect to the underlined normal subgroup, and  $\Res_{HL\sou{GL}GK}^{HL\sou{GLGL}GK}$ means that the underlined $GL$ in subscript embeds diagonally in the underlined $GLGL$ in superscript. Similarly in $\Ind_{HG\sou{L}}^{H\sou{L}G\sou{L}}$, the group $L$ in subscript embed diagonally in the two underlines copies of $L$ in superscript, and in $\Inf_{HG}^{HG\sou{L}}$, inflation is relative to the underlined $L$ in superscript. Thus
\begin{align*}
u&=A(\Def_{HLGK}^{HL\sou{GL}GK}\Res_{HL\sou{GL}GK}^{HL\sou{GLGL}GK}\Ind_{HG\sou{L}\sou{G}LK}^{H\sou{L}G\sou{L}\sou{G}L\sou{G}K}\Inf_{HGLK}^{HG\sou{LG}LK})(\alpha\times\beta)
\end{align*}
Standard relations in the composition of bisets (see Section~1.1.3 and Lemma~2.3.26  of~\cite{biset})
and some tedious but straightforward calculations finally give
$$u=(\alpha\times L)\circ(G\times\beta)=A(\Iso^{HLGK}_{HGLK})(\alpha\times\beta).$$
Similar calculations show that
$$(H\times\beta)\circ(\alpha\times K)=A(\Iso^{HLGK}_{LKHG})(\beta\times\alpha).$$
So $(H\times\beta)\circ(\alpha\times K)=(\alpha\times L)\circ(G\times\beta)$ if and only if 
\begin{align*}
\beta\times\alpha&=A(\Iso_{HLGK}^{LKHG}\Iso^{HLGK}_{HGLK})(\alpha\times\beta)\\
&=A(\Iso_{HGLK}^{LKHG})(\alpha\times\beta),
\end{align*}
that is, if $\alpha$ and $\beta$ commute.  
\end{proof}

\begin{coro} \label{PA monoidal}The assignment $\times:\cp_A\times\cp_A\to \cp_A$ sending $(G,K)$ to $G\times K$ and $(\alpha,\beta)\in A(H\times G)\times A(L\times K)$ to $(\alpha\times L)\circ (G\times \beta)\in A(H\times L\times G\times K)$ is a functor if and only if $A$ is commutative. In particular, when $A$ is commutative, this functor $\times$ endows $\cp_A$ with a structure of a {\em symmetric monoidal category}.
\end{coro}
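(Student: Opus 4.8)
The plan is to derive Corollary~\ref{PA monoidal} essentially for free from Proposition~\ref{commute}, together with the already-established facts that $\Iso$-bisets and identities behave well in $\cp_A$ and that the Burnside images sit inside $CA$. First I would address the ``only if'' direction: if the assignment $\times$ is a functor, then in particular it preserves composition, so for all choices of composable morphisms the relevant squares commute; specialising to the square in Proposition~\ref{commute} (which is exactly the compatibility $(\alpha\times L)\circ(G\times\beta)=(H\times\beta)\circ(\alpha\times K)$, i.e.\ functoriality applied to the pair of morphisms $(\alpha,\varepsilon_K)$ and $(\varepsilon_G,\beta)$) forces $\alpha$ and $\beta$ to commute for all $G,H,K,L$ and all $\alpha\in A(H\times G)$, $\beta\in A(L\times K)$; taking $H=K=1$ yields $CA(G)=A(G)$ for every $G$, i.e.\ $A$ is commutative.

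For the ``if'' direction, assume $A$ is commutative. Then by Proposition~\ref{commute} every square of the displayed type commutes, so the two natural ways of reading a pair $(\alpha,\beta)$ into a morphism $G\times K\to H\times L$ agree; this is precisely what is needed for $\times$ to respect composition. Concretely, I would check the two functor axioms: (i) $\times$ sends the identity pair $(\varepsilon_G,\varepsilon_K)$ to $\varepsilon_{G\times K}$ — this is a direct computation with the explicit formula $\varepsilon_G=A(\Ind_{\Delta(G)}^{G\times G}\Inf_1^{\Delta(G)})(\varepsilon)$ and the formula $u=A(\Iso^{HLGK}_{HGLK})(\alpha\times\beta)$ obtained in the proof of Proposition~\ref{commute}, which in the identity case collapses to the identity of $G\times K$; and (ii) given composable pairs $(\alpha,\beta)$ and $(\alpha',\beta')$, the morphism assigned to the composite pair equals the composite of the assigned morphisms. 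Axiom (ii) unwinds, using $(\alpha\times L)\circ(G\times\beta)=(H\times\beta)\circ(\alpha\times K)$ repeatedly, into an interchange identity $\big((\alpha'\alpha)\times L'\big)\circ\big(G\times(\beta'\beta)\big)=\big((\alpha'\times L')\circ(G'\times\beta')\big)\circ\big((\alpha\times L)\circ(G\times\beta)\big)$, which holds precisely because one can slide the ``$G\times(-)$'' factors past the ``$(-)\times L$'' factors — and that sliding is exactly the commuting square of Proposition~\ref{commute}.

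Once $\times$ is known to be a bifunctor on $\cp_A$, the symmetric monoidal structure is obtained by transporting the one on $\cp_B$ (the biset category, with $\times=$ direct product of groups) along the Burnside morphism: the associativity, unit and symmetry isomorphisms are the images under $\upsilon_A$ of the corresponding canonical isomorphisms of bisets, which land in $CA$ by Corollary~\ref{bisets commute}, hence are natural with respect to \emph{all} morphisms of $\cp_A$ (naturality of a structural iso against an arbitrary $\alpha$ reduces, after the Proposition~\ref{commute} computation, to $\alpha$ commuting with a Burnside element, which always holds). The coherence pentagon and hexagon are then inherited because they already hold in $\cp_B$ and $\upsilon_A$ is a (monoidal) functor.

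I expect the main obstacle to be the bookkeeping in axiom (ii): writing the composite pair's image and the composite of images each as a single $A(\text{biset})(\cdots)$ expression requires the same kind of ``tedious but straightforward'' Mackey-and-deflation manipulations already invoked in the proof of Proposition~\ref{commute}, and one must be careful that every instance where a $G\times(-)$ factor is commuted past a $(-)\times L$ factor is legitimately an application of that proposition and not something stronger. The cleanest way to organise this, which I would adopt, is to phrase everything in terms of the two one-variable functors $\lambda_G=G\times(-)$ and $\rho_L=(-)\times L$ (already shown to be endofunctors of $\cp_A$ in the excerpt), observe that $\times$ is their ``amalgamation'', and note that a bifunctor out of $\cp_A\times\cp_A$ is the same as a pair of commuting families of endofunctors — so the whole statement reduces to: $\lambda_G\rho_L=\rho_L\lambda_G$ on the nose (clear from the formulas) and $\lambda$ commutes with $\rho$ as a natural transformation, which is Proposition~\ref{commute}. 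This reduces axiom (ii) to a formal consequence and isolates the only genuine computation into the already-proven proposition.
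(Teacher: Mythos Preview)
Your proposal is correct and follows exactly the route the paper intends: the corollary is stated without a separate proof, as an immediate consequence of Proposition~\ref{commute}, and your argument spells out precisely that derivation---functoriality of $\times$ is equivalent to the commutativity of all the squares in Proposition~\ref{commute}, hence to $A=CA$, and the symmetric monoidal structure is inherited from the biset category via $\upsilon_A$ and Corollary~\ref{bisets commute}. Your reformulation in terms of the commuting families $\lambda_G$ and $\rho_L$ is a clean way to package axiom~(ii) and is entirely in the spirit of the paper's setup in Remark~\ref{bifunctor}.
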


\section{The center}
\begin{defi}
{Let $A$ be a Green $\CD$-biset functor.} For a group $L$ in $\CD$, we denote by $ZA(L)$ the family of all natural transformations {$\Id\to \Id_L$} from the identity functor $Id:\AMod{A}\rightarrow \AMod{A}$ to the functor $\Id_L$. We call it {{\em the center}} of $A$ at $L$.
\end{defi}

When $L$ is trivial, the functor $\Id_L$ is isomorphic to the identity functor, hence $ZA(1)$ is the family of natural endotransformations of the identity functor. So our definition is analogous to that of the center of a category (see for example Hoffmann~\cite{hoff}  for arbitrary categories, or Section 19 of Butler-Horrocks~\cite{butler-horrocks} for abelian categories). Nonetheless, we want to regard this center as a Green {$\CD$-}biset functor, and see its relation with the commutant $CA$. Our construction is inspired by an analogous construction for Green functors over a fixed finite group in~\cite{bGfun} Section 12.2.

\subsection{The center as a Green biset functor}
Our goal is to show that for each Green $\CD$-biset functor $A$, the assignment $L\mapsto ZA(L)$ is itself a Green $\CD$-biset functor. For this, we will first give an equivalent description of $ZA(L)$, and then build a Green functor structure on $ZA$.
\begin{prop} \label{ZAZA'}Let $A$ be a Green $\CD$-biset functor, and $L\in\CD$. Then $ZA(L)$ is isomorphic to the family $ZA'(L)$ of natural transformations from the identity functor of $\cp_A$ to~$\rho_L$.
\end{prop}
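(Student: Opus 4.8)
The plan is to exploit the two equivalences of categories already set up in the excerpt: first the equivalence $\AMod{A}\simeq {\rm Fun}_R(\cp_A,\AMod{R})$ given by $M\mapsto\widetilde{M}$, $F\mapsto\widehat{F}$, and second the fact (stated just before this proposition) that the Yoneda--Dress endofunctor $\Id_L$ of $\AMod{A}$ corresponds, under this equivalence, to precomposition with the endofunctor $\rho_L={-}\times L$ of $\cp_A$. So a natural transformation $\Id\to\Id_L$ of endofunctors of $\AMod{A}$ should translate into a natural transformation $\mathrm{Id}_{\cp_A}\to\rho_L$ in the $2$-categorical sense, i.e.\ a natural transformation from the identity functor of $\cp_A$ to $\rho_L$, and this is exactly the family $ZA'(L)$. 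The first thing I would do is make this translation precise and check it is a bijection compatible with the $R$-module structures on both sides.

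Concretely, given $\eta\in ZA(L)$, i.e.\ a natural transformation $\eta:\Id\Rightarrow\Id_L$, I would for each object $G$ of $\cp_A$ evaluate the $A$-module component $\eta_{M}$ at the \emph{representable} $A$-module $M=A(-,G)$ (the $L$-shift of which is $A(-,G\times L)$, as noted at the end of Section~2), obtaining an $A$-module map $A(-,G)\to A(-,G\times L)$. By the Yoneda lemma for the category $\cp_A$ — more precisely, since $A$-module maps between representables are in bijection with morphisms of $\cp_A$ — this map is given by composition with a distinguished morphism $\zeta_G\in\Hom_{\cp_A}(G,G\times L)=A(G\times L\times G)$, namely the image of $\varepsilon_G$ under $\eta_{A(-,G)}$. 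I would then check that the assignment $G\mapsto\zeta_G$ is a natural transformation $\mathrm{Id}_{\cp_A}\Rightarrow\rho_L$: naturality with respect to a morphism $\alpha\in\Hom_{\cp_A}(G,G')$ follows from naturality of $\eta$ applied to the induced $A$-module map $A(-,G)\to A(-,G')$, together with the fact that $\rho_L$ acts on representables exactly by the $L$-shift. This defines a map $ZA(L)\to ZA'(L)$, $\eta\mapsto(\zeta_G)_G$, clearly $R$-linear.

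For the inverse direction, given a natural transformation $\zeta=(\zeta_G)_G$ from $\mathrm{Id}_{\cp_A}$ to $\rho_L$, I would reconstruct $\eta$: every $A$-module $M$ corresponds to a functor $\widetilde M\in{\rm Fun}_R(\cp_A,\AMod{R})$, and the component $\zeta_G\in\Hom_{\cp_A}(G,\rho_L(G))$ acts via $\widetilde M$ to give a map $\widetilde M(G)=M(G)\to \widetilde M(\rho_L(G))=M(G\times L)=M_L(G)=(\Id_L M)(G)$; one checks, using functoriality of $\widetilde M$ and naturality of $\zeta$ in $G$, that these maps assemble into an $A$-module morphism $\eta_M:M\to M_L$, and using naturality of the whole set-up that $M\mapsto\eta_M$ is natural in $M$, i.e.\ an element of $ZA(L)$. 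Finally I would verify the two assignments are mutually inverse: starting from $\eta$, passing to $\zeta$, and back, one recovers $\eta$ because an $A$-module morphism between general $A$-modules is determined by its effect via the values $\widetilde M(\zeta_G)$, which is precisely how $\eta$ was built from its values on representables; starting from $\zeta$ and going around, one recovers $\zeta$ by evaluating at representables and using Yoneda again.

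The main obstacle I anticipate is the bookkeeping in the Yoneda step: identifying $A$-module morphisms between the representable functors $A(-,G)$ and $A(-,G\times L)$ with elements of $\Hom_{\cp_A}(G,G\times L)$, and checking that under this identification the $L$-shift functor $\Id_L$ really does correspond to $\rho_L$ \emph{on the nose} (not just up to the isomorphism asserted earlier), so that the image of $\varepsilon_G$ under $\eta$ lands in the right hom-set and transforms correctly. Everything else is formal: once the equivalence of categories and the description $\Id_L\cong\Psi_L\circ\Theta_L\cong$ (precomposition with $\rho_L$) are in hand, the statement is essentially the observation that a natural transformation between two functors obtained by precomposition along $\mathrm{Id}_{\cp_A}$ and $\rho_L$ is the same thing as a natural transformation $\mathrm{Id}_{\cp_A}\Rightarrow\rho_L$, restricted to the (dense) subcategory of representables and then extended. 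I would present the representable-object argument carefully and leave the remaining compatibilities as routine verifications.
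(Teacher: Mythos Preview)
Your proposal is correct and follows essentially the same route as the paper: restrict a natural transformation $\Id\Rightarrow\Id_L$ to representable $A$-modules via the Yoneda embedding $\CY_A:\cp_A\to\AMod{A}$, use that $\Id_L\circ\CY_A=\CY_A\circ\rho_L$ to read off an element of $ZA'(L)$, and go back by extending from representables (the paper phrases this last step as ``the image of $\CY_A$ generates $\AMod{A}$'', which is exactly your density/extension argument). The paper's write-up is more terse---a short paragraph---while you spell out the Yoneda step and the inverse construction in greater detail, but the underlying argument is the same.
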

\begin{proof}
Consider the Yoneda embedding $\CY_A:\cp_A\to \AMod{A}$ sending $L\in\CD$ to the functor $A(-,L)$. Since $\Id_L$ preserves the image of $\CY_A$, which is a fully faithful functor, we have $\Id_L\circ \CY_A=\CY_A\circ \rho_L$, and it follows that each element of $ZA(L)$ induces a natural transformation from the identity functor of $\cp_A$, denoted by $\rho_1$, to $\rho_L$. In this way, we get a linear map $f_L:ZA(L)\to ZA'(L)$. Conversely, each natural transformation $\rho_1\to \rho_L$  induces a natural transformation $\CY_A\to \Id_L\circ \CY_A$. Since the image of $\CY_A$ generates $\AMod{A}$, such a natural transformation extends to a natural transformation from the identity functor of $\AMod{A}$ to $\Id_L$. This gives a linear map $g_L:ZA'(L)\to ZA(L)$. Clearly $f_L$ and $g_L$ are inverse to one another. 
\end{proof}

We will now use the previous identification to get a better understanding of $ZA(L)$. Indeed, a natural transformation~$t$ from the identity functor of $\cp_A$ to the functor $\rho_L={-}\times L=\theta_L\psi_L$ consists, for each $G\in \CD$, of a morphism $t_G:G\to G\times L$ in $\cp_A$, i.e. $t_G\in A(G\times L\times G)$, such that for any $H\in \CD$ and any $\alpha\in A(H\times G)$, the diagram
\begin{equation}\label{diagram Z}
\vcenter{\xymatrix{
G\ar[r]^-{t_G}\ar[d]_-\alpha&G\times L\ar[d]^-{\alpha\times L=\theta_L\psi_L(\alpha)}\\
H\ar[r]^-{t_H}&H\times L
}}
\end{equation}
is commutative in $\cp_A$.

\begin{lema}
Let $G,H\in \CD$, and $\alpha\in A(H\times G)=\Hom_{\cp_A}(G,H)$. For an element $u$ of $A(H\times L\times G)=\Hom_{\cp_A}(G,H\times L)$, let $u^\natural$ denote the element $u$, viewed as a morphism from $L\times G$ to $H$ in $\cp_A$. Then for any $t\in A(G\times L\times G)$
$$\big(\theta_L\psi_L(\alpha)\circ t\big)^\natural=\alpha\circ t^\natural \;\;\hbox{in}\;\; A(H\times L\times G).$$
\end{lema}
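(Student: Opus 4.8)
The statement to prove is an identity of morphisms in $\CP_A$, expressed through two ways of composing $t \in A(GLG)$ and $\alpha \in A(HG)$. The key observation is that $A(HLG)$ carries two different interpretations as a hom-set in $\CP_A$: as $\Hom_{\CP_A}(G, HL)$ and (via the $\natural$ relabelling) as $\Hom_{\CP_A}(LG, H)$. Both sides of the asserted equation live in $A(HLG)$; the left side is the $\natural$-relabelling of the composite $\theta_L\psi_L(\alpha)\circ t$ in $\CP_A$, while the right side is the composite $\alpha \circ t^\natural$ in $\CP_A$, where on the left $t$ is viewed as a morphism $G \to GL$ and on the right $t^\natural$ is viewed as a morphism $LG \to G$. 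So the plan is to write out both composites explicitly using the composition formula in Definition~\ref{PA}, identify the resulting bisets acting on $\alpha \times t$ (resp.\ $\alpha \times t$), and check that after applying the relabelling $\natural$ they agree.

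\emph{First step.} Expand the left-hand side. By definition, $\theta_L\psi_L(\alpha) = A(\Ind_{HGL}^{HLGL}\Inf_{HG}^{HGL})(\alpha) \in A(HLGL)$, viewed as a morphism $GL \to HL$ in $\CP_A$. Then $\theta_L\psi_L(\alpha)\circ t$, with $t: G \to GL$, is computed by the composition formula: it equals $A\big(\Def^{HL\,\Delta(GL)\,G}_{HLG}\Res^{HLGLGLG}_{HL\,\Delta(GL)\,G}\big)$ applied to $\theta_L\psi_L(\alpha) \times t$, i.e.\ to $A(\Ind_{HGL}^{HLGL}\Inf_{HG}^{HGL})(\alpha)\times t \in A(HLGLGLG)$. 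Absorbing the induction and inflation via functoriality, this becomes a single biset operation applied to $\alpha \times t \in A(HGGLG)$. Then applying $\natural$ just relabels the result as an element of $\Hom_{\CP_A}(LG,H) = A(HLG)$ — concretely this amounts to post-composing with a suitable isomorphism that permutes the $L$ factor, or rather, recognizing that $\natural$ is the identity on the underlying $R$-module $A(HLG)$.

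\emph{Second step.} Expand the right-hand side. Here $t^\natural$ is $t \in A(GLG)$ viewed as a morphism $LG \to G$ in $\CP_A$, and $\alpha : G \to H$. So $\alpha \circ t^\natural = A\big(\Def^{H\,\Delta(G)\,LG}_{H\,LG}\Res^{HGGLG}_{H\,\Delta(G)\,LG}\big)(\alpha \times t) \in A(HLG)$, where $\alpha \times t \in A(HGGLG)$.

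\emph{Third step — the comparison.} Both sides are now expressed as $A(\text{biset})$ applied to $\alpha \times t \in A(HGGLG)$, up to reordering of the copies of $G$ and the placement of $L$ (the left side has an extra $L$ that gets introduced by $\theta_L\psi_L$ and then eliminated again, so after the dust settles the surviving biset operations should match). I would reduce both composite bisets using the standard manipulation rules for composing induction, inflation, restriction and deflation (Section~1.1.3 and the Mackey formula in \cite{biset}), exactly in the style of the computations in the proof of Theorem~\ref{adjonctions-enonce}: the extra $\Ind_{HGL}^{HLGL}$ and $\Inf_{HG}^{HGL}$ coming from $\theta_L\psi_L(\alpha)$ should, when pushed through the $\Res$ and $\Def$ of the composition in $\CP_A$, collapse against the corresponding pieces, leaving precisely the biset $\Def^{H\,\Delta(G)\,LG}_{HLG}\Res^{HGGLG}_{H\,\Delta(G)\,LG}$ that appears on the right (possibly pre/post-composed with an isomorphism that is absorbed by the $\natural$ relabelling). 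The main obstacle is purely bookkeeping: keeping track of which copy of $G$ (and which copy of $L$) each coordinate in the long products $HLGLGLG$, $HGGLG$, etc.\ refers to, and verifying that the images of the various structural maps intersect and generate correctly so that the Mackey formula applies cleanly. No conceptual difficulty is expected — it is a ``tedious but straightforward'' verification of the same flavour as Proposition~\ref{commute} and Theorem~\ref{adjonctions-enonce}, and I would organise it as a chain of biset isomorphisms, each justified by one standard relation, arriving at the desired equality in $A(HLG)$.
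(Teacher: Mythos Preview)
Your plan is correct and would yield a valid proof, but it takes a different route from the paper's argument. You propose a direct biset computation: expand both composites via Definition~\ref{PA}, push the $\Ind$ and $\Inf$ coming from $\theta_L\psi_L(\alpha)$ through the $\Res$ and $\Def$ of the composition using the Mackey formula and the standard relations, and verify that the resulting biset acting on $\alpha\times t$ matches on both sides. This is entirely in the spirit of the computations in Theorem~\ref{adjonctions-enonce} and Proposition~\ref{commute}, and will go through with the bookkeeping you describe.

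The paper instead gives a short conceptual argument that avoids any new biset manipulation. It observes that $\rho_L=\theta_L\psi_L$ is self-adjoint (as a consequence of Theorem~\ref{adjonctions-enonce}), and that the adjunction bijection $v\mapsto v^\sharp$ from $\Hom_{\cp_A}\big(G,\rho_L(H)\big)$ to $\Hom_{\cp_A}\big(\rho_L(G),H\big)$ is just the isomorphism $A(HLG)\to A(HGL)$ switching $L$ and $G$. Naturality of this adjunction bijection in $H$ (a purely categorical fact) gives $(\rho_L(\alpha)\circ t)^\sharp=\alpha\circ t^\sharp$ immediately. Since $t^\natural=t^\sharp\circ\tau_{L,G}$ for the swap isomorphism $\tau_{L,G}:LG\to GL$, the desired identity follows by post-composing with $\tau_{L,G}$. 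The paper's approach thus reaps the benefit of the adjunction already established, trading your explicit biset chase for a one-line application of naturality; your approach is more self-contained but longer.
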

\begin{proof}
The functor $\rho_L$ is a self-adjoint $R$-linear endofunctor of $\cp_A$. It follows from the proof of Theorem~\ref{adjonctions-enonce} that for any $G,H\in\cp_A$, the natural bijection given by this adjunction
$$v\in\Hom_{\cp_A}\big(G,\rho_L(H)\big)=A(HLG)\to v^\sharp\in\Hom_{\cp_A}\big(\rho_L(G),H\big)=A(HGL)$$
is induced by the isomorphism $HLG\to HGL$ switching the components $L$ and $G$. By adjunction we have commutative diagrams
$$\xymatrix{
G\ar[r]^-t\ar[rd]_-{\rho_L(\alpha)\circ t}&\rho_L(G)\ar[d]^-{\rho_L(\alpha)}\\
&\rho_L(H)
}
\hspace{4ex}
\xymatrix{
\rho_L(G)\ar[r]^-{t^\sharp}\ar[dr]_{\big(\rho_L(\alpha)\circ t\big)^\sharp}&G\ar[d]^-\alpha\\
&H
}
$$
so $\big(\rho_L(\alpha)\circ t\big)^\sharp=\alpha\circ t^\sharp$. Since $t^\natural=t^\sharp\circ\tau_{L,G}$, where $\tau_{G,L}:LG\to GL$ is the isomorphism switching $G$ and $L$, the lemma follows by right composition of the previous equality with $\tau_{G,L}$.
\end{proof}
Since $v$ and $v^\natural$ are actually the same element of $A(HLG)$, for any $v\in A(HLG)$, the commutativity in Diagram~(\ref{diagram Z}) can be simply written as
\begin{equation}\label{def Cr}\alpha\circ_G t_G=t_H\circ_H \alpha,\end{equation}
where $\circ_G$ is the composition $A(HG)\times A(GLG)\to A(HLG)$, and $\circ_H$ is the composition $A(HLH)\times A(HG)\to A(HLG)$. Thus:
\begin{prop} \label{ZA Cr}Let $A$ be a Green $\CD$-biset functor, and $L\in \CD$. Then an element $t$ of $ZA(L)$ consists of a family of elements $t_G\in A(GLG)$, for every $G\in\CD$, such that $\alpha\circ_G t_G=t_H\circ_H \alpha$, for any $G, H$ in $\CD$ and $\alpha\in A(HG)$. In particular $ZA(L)$ is a set.
\end{prop}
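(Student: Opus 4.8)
The plan is to collect the reformulations established in the two paragraphs preceding the statement. First I would invoke Proposition~\ref{ZAZA'} to replace $ZA(L)$ by the isomorphic family $ZA'(L)$ of natural transformations $t$ from the identity endofunctor $\rho_1$ of $\cp_A$ to $\rho_L$. Unwinding the definition of a natural transformation of functors $\cp_A\to\cp_A$, such a $t$ consists, for each object $G$ of $\cp_A$ (that is, each group $G\in\CD$), of a morphism $t_G\in\Hom_{\cp_A}(G,\rho_L(G))=\Hom_{\cp_A}(G,G\times L)=A(GLG)$, the naturality axiom being precisely the commutativity, for every $H\in\CD$ and every $\alpha\in\Hom_{\cp_A}(G,H)=A(HG)$, of Diagram~(\ref{diagram Z}).

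Next I would rewrite that commutativity condition as equation~(\ref{def Cr}). On one side, $t_H\circ\alpha$ is, by the very definition of composition in $\cp_A$, exactly the element $t_H\circ_H\alpha$ of $A(HLG)$. On the other side, $\rho_L(\alpha)\circ t_G=\theta_L\psi_L(\alpha)\circ t_G$ a priori involves the more complicated morphism $\theta_L\psi_L(\alpha)\in A(HLGL)$; here the lemma preceding the statement does the work, since applying $\natural$ and using that $v$ and $v^\natural$ are literally the same element of $A(HLG)$ turns it into $\theta_L\psi_L(\alpha)\circ t_G=\alpha\circ_G t_G$. Thus Diagram~(\ref{diagram Z}) commutes for all $\alpha$ if and only if $\alpha\circ_G t_G=t_H\circ_H\alpha$ for all $G,H\in\CD$ and all $\alpha\in A(HG)$, which is the first assertion.

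It then remains to deduce that $ZA(L)$ is a set. The point is that $\cp_A$ is essentially small, with the fixed set $\mathbf{D}$ of representatives of isomorphism classes of objects as a skeleton: given $G\in\CD$ there is $G_0\in\mathbf{D}$ isomorphic to it, and a group isomorphism $G\to G_0$ induces an isomorphism $\phi$ from $G$ to $G_0$ in $\cp_A$, so that $\rho_L(\phi)$ is invertible as well and naturality forces $t_G=\rho_L(\phi)^{-1}\circ t_{G_0}\circ\phi$. Hence an element of $ZA'(L)$ is completely determined by the subfamily $(t_{G_0})_{G_0\in\mathbf{D}}$, and restriction therefore embeds $ZA'(L)$ into the set $\prod_{G_0\in\mathbf{D}}A(G_0\times L\times G_0)$; since $ZA(L)\cong ZA'(L)$ by Proposition~\ref{ZAZA'}, it is a set as well.

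I do not expect any real obstacle here: all the genuine content, namely the use of the self-adjunction of $\rho_L$ and the simplification of $\theta_L\psi_L(\alpha)\circ t_G$, has already been carried out in the preceding lemma, so what is left is bookkeeping together with the observation that Diagram~(\ref{diagram Z}) is just the naturality square. The only step needing a separate (but routine) argument is the set-theoretic claim, for which the skeleton $\mathbf{D}$ fixed in the Preliminaries is exactly what is needed.
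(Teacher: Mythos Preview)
Your proposal is correct and follows essentially the same approach as the paper: both treat the first assertion as already established by the discussion leading to Diagram~(\ref{diagram Z}) and equation~(\ref{def Cr}), and both deduce the set-theoretic claim by restricting to the skeleton $\mathbf{D}$ and embedding into the product $\prod_{G\in\mathbf{D}}A(GLG)$. The paper is slightly more precise in identifying the image of this embedding as the set $Cr_A(L)$ of families satisfying~(\ref{def Cr}), but your argument is otherwise the same.
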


\begin{proof} It remains to see that $ZA(L)$ is a set. This is clear, since an element $t$ of~$ZA(L)$ is determined by its components $t_G$, where $G$ runs trough our chosen set {\bf D} of representatives of isomorphism classes of groups in $\CD$. More precisely $ZA(L)$ is in one to one correspondence with the set $Cr_A(L)$ of sequences of elements $(t_G)_{G\in\mathbf{D}}\in\prod_{G\in \mathbf{D}}\limits A(GLG)$ such that the above condition~(\ref{def Cr}) holds for any $G,H\in\mathbf{D}$ and any $\alpha\in A(HG)$.
\end{proof}

\begin{prop} \label{natural xi} \begin{enumerate}
\item Let $K,L\in\CD$, and $\beta\in CA(LK)$. Then the family of morphisms $\lambda_G(\beta)=G\times\beta:G\times K\to G\times L$, for $G\in\CD$, define a natural transformation of functors $\rho_\beta$ from $\rho_K$ to $\rho_L$.
\item Let $\End_R(\cp_A)$ denote the category of $R$-linear endofunctors of $\cp_A$, where morphisms are natural transformations of functors. Then the assignment
$$\left\{\begin{array}{rcl}K\in\CD&\mapsto& \rho_K\in\End_R(\cp_A)\\
\beta\in CA(LK)&\mapsto&(\rho_\beta:\rho_K\to\rho_L)\end{array}\right.$$
is a faithful $R$-linear functor $\rho_{CA}$ from $\cp_{CA}$ to $\End_R(\cp_A)$.
\end{enumerate}
\end{prop}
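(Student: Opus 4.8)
The plan is to verify the two claims in turn, using Proposition~\ref{commute} as the crucial tool for naturality in part (1), and then checking the functor axioms directly for part (2).

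For part (1), fix $\beta\in CA(LK)$. For each $G\in\CD$ I set $(\rho_\beta)_G:=\lambda_G(\beta)=G\times\beta\in A(GLGK)=\Hom_{\cp_A}(\rho_K(G),\rho_L(G))$. To see that this is a natural transformation $\rho_K\Rightarrow\rho_L$, I must show that for every morphism $\alpha\in A(HG)=\Hom_{\cp_A}(G,H)$, the square
$$\xymatrix{
G\times K\ar[r]^-{G\times\beta}\ar[d]_-{\rho_K(\alpha)=\alpha\times K}&G\times L\ar[d]^-{\rho_L(\alpha)=\alpha\times L}\\
H\times K\ar[r]_-{H\times\beta}&H\times L
}$$
commutes in $\cp_A$. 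But this is exactly the square appearing in Proposition~\ref{commute}, which commutes if and only if $\alpha$ and $\beta$ commute; and since $\beta\in CA(LK)$, it commutes with every $\alpha\in A(HG)$ for every $H\in\CD$. Hence the square commutes, and $\rho_\beta$ is a natural transformation. This is the step I expect to carry the real content, but Proposition~\ref{commute} does all the work, so there is essentially nothing left to compute here.

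For part (2), I must check that $\rho_{CA}$ respects identities, composition, and is $R$-linear and faithful. $R$-linearity on hom-sets: the hom-set $\Hom_{\cp_{CA}}(K,L)$ is $CA(LK)$, and the assignment $\beta\mapsto\rho_\beta$ is $R$-linear because $\lambda_G=L\times{-}$ is built from the bilinear product $\times$ of $A$, hence is $R$-linear in $\beta$ componentwise; a natural transformation is zero iff all its components are, so $\beta\mapsto\rho_\beta$ is $R$-linear. Faithfulness: if $\rho_\beta=\rho_{\beta'}$ then in particular the components at $G=1$ agree, i.e. $1\times\beta=1\times\beta'$ in $A(L\times 1\times L\times K)\cong A(LK)$ up to the canonical isomorphism identifying $1\times X$ with $X$; by the identity element axiom (Definition~\ref{defgreen}(2)) this forces $\beta=\beta'$. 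Identities: $\rho_{\varepsilon_K}$ should be the identity natural transformation of $\rho_K$, i.e. $(\rho_{\varepsilon_K})_G=\varepsilon_{G\times K}$ for all $G$; this follows from the explicit formula $L\times\alpha=A\big(\Ind_{LHG}^{LHLG}\Inf_{HG}^{LHG}\big)(\alpha)$ of Remark~\ref{bifunctor} applied to $\alpha=\varepsilon_K=A(\Ind_{\Delta(K)}^{KK}\Inf_1^{\Delta(K)})(\varepsilon)$, together with the description of $\varepsilon_{G\times K}$ in Definition~\ref{PA} and standard biset manipulations (Section~1.1.3 of~\cite{biset}). Composition: given $\beta\in CA(LK)$ and $\gamma\in CA(ML)$, I must show $\rho_\gamma\circ\rho_\beta=\rho_{\gamma\circ\beta}$, i.e. for each $G$, $(G\times\gamma)\circ(G\times\beta)=G\times(\gamma\circ\beta)$ in $\cp_A$, where $\gamma\circ\beta$ is the composition in $\cp_{CA}$, which is the same as composition in $\cp_A$ since $CA$ is a subfunctor. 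This is the functoriality of $\lambda_G=L\times{-}$ on $\cp_A$, which was already asserted in Remark~\ref{bifunctor} (``it is easy to see'' / ``it is easy to check that $\rho_L$ is in fact an endofunctor'', and the same argument applies verbatim to $\lambda_L$), so I may invoke it; alternatively it is a routine Mackey-formula computation of the same flavor as the proof of Theorem~\ref{adjonctions-enonce}.

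The main obstacle, such as it is, is purely bookkeeping: matching the composition formula in $\cp_A$ against the definition of $\lambda_G(\gamma)\circ\lambda_G(\beta)$ and checking the identity-morphism case, both of which are instances of the standard biset identities used repeatedly above. No new idea is needed beyond Proposition~\ref{commute} and the fact that $\lambda_L$ is an endofunctor of $\cp_A$; the key point is simply that the ``commute'' condition defining $CA$ is precisely what makes the naturality squares commute.
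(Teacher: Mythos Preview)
Your proposal is correct and follows essentially the same route as the paper: part~(1) is reduced directly to Proposition~\ref{commute}, and part~(2) is handled by invoking the functoriality of $\lambda_G$ for composition and identities, together with the observation that the component at $G=1$ recovers $\beta$ for faithfulness. (Minor typo: your $A(L\times 1\times L\times K)$ should read $A(1\times L\times 1\times K)$, but the argument is unaffected.)
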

\begin{proof} (1) This follows from Proposition~\ref{commute}.\medskip\par\noindent
(2) We have to check that if $G,J,K,L\in\CD$, if $\alpha\in A(KJ)$ and $\beta\in A(LK)$, then $(G\times\beta)\circ(G\times\alpha)=G\times(\beta\circ\alpha)$ in $A(GLGJ)$, and that if $\beta$ is the identity element of $CA(KK)$, then $G\times\beta$ is the identity morphism of $G\times K$ in $\cp_A$. This follows from the fact that $\lambda_G$ is a functor.\par
So we get a functor $\rho_{CA}:\cp_{CA}\to \End_R(\cp_A)$. Seing that this functor is faithful amounts to seing that if $\beta\in CA(LK)$, then $\rho_\beta=0$ if and only if $\beta=0$. But the component $1\times \beta$ of $\rho_\beta$ is clearly equal to $\beta$, after identification of $1\times K$ with $K$ and $1\times L$ with $L$.
\end{proof}
\begin{rem} In particular, it follows from Assertion 2 that an isomorphism of groups $K\to K'$ induces an isomorphism of functors $\rho_K\to\rho_K'$: indeed a group isomorphism $\varphi: K\to K'$ is represented by a $(K',K)$-biset $U_\varphi\in RB(K'K)$, hence by an element $\beta_\varphi=\upsilon_{K'K}(U_\varphi)\in CA(K'K)$, by Corollary~\ref{bisets commute}. The corresponding natural transformation $\rho_{\beta_\varphi}$ is an isomorphism $\rho_K\to\rho_K'$, with inverse $\rho_{\beta_{\varphi^{-1}}}$.
\end{rem}

\begin{lema} Let $A$ be a Green $\CD$-biset functor and $K,L\in\CD$.
\begin{enumerate} \item The endofunctors $\rho_L\circ\rho_K$  and $\rho_{KL}$ of $\cp_A$ are naturally isomorphic.
\item Let $s\in ZA(LK)$, given by the family of elements $s_G\in A(GLKG)$, for $G\in\CD$. Then the natural transformation $s^o:\rho_K\to\rho_L$ deduced from $s:\Id\to \rho_K\rho_L$ by adjunction, is defined by the family of morphisms
$$s^o_G=\Iso^{GLGK}_{GLKG}(s_G)\in A(GLGK)=\Hom_{\cp_A}(GK,GL).$$
\item The map $s\mapsto s^o$ is an isomorphism of $R$-modules 
$$ZA(LK)\to \Hom_{\End_R(\cp_A)}(\rho_K,\rho_L).$$
\end{enumerate}
\end{lema}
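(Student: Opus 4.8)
The plan is to reduce all three assertions to the self-adjunction of the shift functors established in Theorem~\ref{adjonctions-enonce} (together with the explicit form of the adjunction bijection recorded in the lemma preceding Proposition~\ref{ZA Cr}); assertion~(1) is the computational heart of the matter, and assertion~(3) is then formal.

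For~(1), on objects both $\rho_L\circ\rho_K$ and $\rho_{KL}$ send a group $G$ to $G\times K\times L$ (associated as $(G\times K)\times L$, resp. $G\times(K\times L)$), and the natural isomorphism will have at $G$ the component $\varepsilon_{G\times K\times L}$, i.e. the morphism of $\cp_A$ attached to the canonical associativity isomorphism of groups. The content is then naturality: for $\alpha\in A(H\times G)=\Hom_{\cp_A}(G,H)$ one must check $\rho_L\big(\rho_K(\alpha)\big)=\rho_{KL}(\alpha)$ as elements of $A(HKLGKL)$. Using the closed formula $\rho_M(\beta)=A\big(\Ind\,\Inf\big)(\beta)$ displayed just after the definition of $\rho_M$, the left-hand side unfolds to $A\big(\Ind_{HKGKL}^{HKLGKL}\Inf_{HKGK}^{HKGKL}\Ind_{HGK}^{HKGK}\Inf_{HG}^{HGK}\big)(\alpha)$ and the right-hand side to $A\big(\Ind_{HGKL}^{HKLGKL}\Inf_{HG}^{HGKL}\big)(\alpha)$; simplifying the composite $\Inf\,\Ind\,\Inf$ via the standard relations of Section~1.1.3 and Lemma~2.3.26 of~\cite{biset} identifies the two. (Equivalently, transported through the equivalence $\AMod{A}\simeq{\rm Fun}_R(\cp_A,\AMod R)$, assertion~(1) is the familiar fact that iterating the Yoneda--Dress construction by $K$ then by $L$ equals the Yoneda--Dress construction by $K\times L$.)

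For~(2), by~(1) we may regard $s\in ZA(LK)$ as a natural transformation $\Id\Rightarrow\rho_K\circ\rho_L$; since the identification of~(1) is componentwise the identity morphism $\varepsilon$ of $\cp_A$, the component $\bar s_G\in\Hom_{\cp_A}\big(G,\rho_K(\rho_L G)\big)=A(GLKG)$ is literally $s_G$. Now $\rho_K$, which is isomorphic to $\theta_K\circ\psi_K$, is a self-adjoint endofunctor of $\cp_A$, the self-adjunction being the composite of the two adjunctions $\theta_K\dashv\psi_K$ and $\psi_K\dashv\theta_K$ of Theorem~\ref{adjonctions-enonce}; by the lemma preceding Proposition~\ref{ZA Cr} the associated bijection $\Hom_{\cp_A}(X,\rho_K Y)\to\Hom_{\cp_A}(\rho_K X,Y)$, $v\mapsto v^\sharp$, is induced by the isomorphism of groups switching the factors $K$ and $X$. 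Taking $X=G$ and $Y=\rho_L G=G\times L$ yields precisely $s^o_G=(\bar s_G)^\sharp=\Iso^{GLGK}_{GLKG}(s_G)\in A(GLGK)=\Hom_{\cp_A}(GK,GL)$; and the family $(s^o_G)_G$ is automatically a natural transformation $\rho_K\Rightarrow\rho_L$, being the transpose of the natural transformation $\bar s$ under the (natural in both variables) adjunction bijection.

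Assertion~(3) is then formal: combining Propositions~\ref{ZAZA'} and~\ref{ZA Cr} with~(1) and the self-adjunction $\rho_K\dashv\rho_K$ gives a chain of $R$-module isomorphisms
$$ZA(LK)\;\cong\;\Hom_{\End_R(\cp_A)}(\Id,\rho_{LK})\;\cong\;\Hom_{\End_R(\cp_A)}(\Id,\rho_K\circ\rho_L)\;\cong\;\Hom_{\End_R(\cp_A)}(\rho_K,\rho_L),$$
whose composite is exactly $s\mapsto s^o$ by~(2); $R$-linearity is clear since every map involved is obtained by applying $A$ to a biset, componentwise. I expect the only genuine obstacle to be the bookkeeping in~(1) — keeping the six tensor factors $H,K,L,G,K,L$ correctly aligned through the chain of inductions and inflations and checking that the surviving composite of bisets is the one defining $\rho_{KL}(\alpha)$ — together with a minor secondary point of reading off the correct slot transposition in~(2); everything else is routine biset algebra or category-theoretic formalism.
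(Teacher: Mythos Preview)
Your proposal is correct and follows essentially the same route as the paper's own proof: the paper dispatches (1) as ``a straightforward verification'' and (3) as ``clear, by adjunction'', while for (2) it traces the two-step adjunction $\theta_K\dashv\psi_K$, $\psi_K\dashv\theta_K$ exactly as you do (via the $v\mapsto v^\sharp$ bijection of the preceding lemma), arriving at the same formula $s^o_G=A(\Iso_{GLKG}^{GLGK})(s_G)$. You simply spell out what the paper leaves as an exercise, and your bookkeeping in (1) is accurate.
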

\begin{proof}
(1) This follows from a straightforward verification. \medskip\par\noindent
(2) Indeed, by the proof of Theorem~\ref{adjonctions-enonce}, for each $G\in\CD$, the morphism $s_G\in A(GLKG)$
$$s_G:G\to GLK=\rho_K\rho_L(G)=\theta_K\psi_K\rho_L(G)$$
in $\cp_A$ gives by adjunction the morphism
$$u:\psi_K(G)\to \psi_K\rho_L(G),$$
in $\cp_{A_K}$, defined as the element $u=A(\Iso_{GLKG}^{GLGK})(s_G)\in A_K(GLG)=A(GLGK)$. This element $u$ gives in turn the morphism
$$v:\theta_K\psi_K(G)=\rho_K(G)\to\rho_L(G)$$
equal to $u\in A(GLGK)$, but viewed as a morphism in $\cp_A$ from $GK$ to $GL$.
\mpn
(3) This is clear, by adjunction.
\end{proof}

\begin{prop}
\label{zisgbf}
The center of $A$ is a {$\CD$-}biset functor.
\end{prop}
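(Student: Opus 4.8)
The plan is to exhibit $ZA$ as a $\CD$-biset functor by first endowing each $ZA(L)$ with an $R$-module structure (already implicit, since $Cr_A(L)\subseteq\prod_{G\in\mathbf D}A(GLG)$ is visibly an $R$-submodule), and then defining, for a morphism $\varphi\in RB(L,L')=\Hom_{R\CC}(L,L')$, a map $ZA(\varphi)\colon ZA(L)\to ZA(L')$ compatibly with composition and identities. Via the identification $ZA(L)\cong ZA'(L)$ of Proposition~\ref{ZAZA'}, an element of $ZA(L)$ is a natural transformation $\rho_1\to\rho_L$ of endofunctors of $\cp_A$; and by the remark following Proposition~\ref{natural xi}, the assignment $L\mapsto\rho_L$ is itself (the object part of) a functor. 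The cleanest route is therefore to show that $L\mapsto\rho_L$ upgrades to an $R$-linear functor $\rho\colon R\CD\to\End_R(\cp_A)$ — not merely from $\cp_{CA}$, but from the biset category itself — by composing the canonical functor $R\CD\to\cp_{RB}\to\cp_{CA}$ (the latter from $\upsilon_{RB}$, resp. $\upsilon_{CA}$, using Corollary~\ref{bisets commute}) with $\rho_{CA}$ of Proposition~\ref{natural xi}(2). Then $ZA(L)=\Hom_{\End_R(\cp_A)}(\rho_1,\rho_L)$ and functoriality in $L$ is automatic: for $\varphi\colon L\to L'$ one sets $ZA(\varphi)(t)=\rho(\varphi)\circ t$, post-composition with the natural transformation $\rho(\varphi)\colon\rho_L\to\rho_{L'}$.

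Concretely, I would proceed as follows. First, record that $\rho_1\cong\Id_{\cp_A}$ and that each $\rho_L$ is an $R$-linear endofunctor of $\cp_A$ (already in the excerpt), so $\Hom_{\End_R(\cp_A)}(\rho_1,\rho_L)$ makes sense and, by the lemma preceding this proposition (case $K=1$), equals $ZA(L)$ as an $R$-module. Second, define $\rho(\varphi)$ for a biset $\varphi$: writing $U_\varphi$ for a $(L',L)$-biset and $\beta_\varphi=\upsilon_{A,L'L}(U_\varphi)\in CA(L'L)$, put $\rho(\varphi)=\rho_{\beta_\varphi}$, the natural transformation $\rho_L\to\rho_{L'}$ of Proposition~\ref{natural xi}(1), whose $G$-component is $G\times\beta_\varphi\colon G\times L\to G\times L'$. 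Extend $R$-bilinearly. Third, check functoriality: $\rho(\psi\circ\varphi)=\rho(\psi)\circ\rho(\varphi)$ and $\rho(\Id_L)=\mathrm{id}_{\rho_L}$; this is exactly the statement that $\beta\mapsto\rho_\beta$ respects composition in $\cp_{CA}$ and identities, which is Proposition~\ref{natural xi}(2) composed with the functor $R\CD\to\cp_{CA}$ induced by $\upsilon_A$ (using that $\upsilon$ is a morphism of Green functors, so $\upsilon_{L'L}(U_\psi)\circ\upsilon_{L'L}(U_\varphi)=\upsilon(U_\psi\circ U_\varphi)$ in $\cp_{CA}$, and that composition in $\cp_{RB}$ of bisets-as-morphisms agrees with $\circ$ of bisets). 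Fourth, deduce that $ZA(\varphi)(t):=\rho(\varphi)\circ t$ is a well-defined $R$-linear map $ZA(L)\to ZA(L')$, and that $ZA(\psi\circ\varphi)=ZA(\psi)\circ ZA(\varphi)$ and $ZA(\Id)=\Id$ follow from associativity of vertical composition of natural transformations. This gives the $\CD$-biset functor $ZA$.

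It remains only to translate $ZA(\varphi)$ into the explicit model $Cr_A(L)$ of Proposition~\ref{ZA Cr}, which will be needed later and is worth including: if $t=(t_G)_{G}$ with $t_G\in A(GLG)$, then $\big(ZA(\varphi)(t)\big)_G$ is the $G$-component of $\rho_{\beta_\varphi}\circ t$, i.e. the composite $G\xrightarrow{t_G}G\times L\xrightarrow{G\times\beta_\varphi}G\times L'$ in $\cp_A$, which unwinds — using the formula $G\times\beta_\varphi=A\big(\Ind_{GL'G L}^{GL'GL}\Inf_{G L'G}^{\cdots}\big)(\cdots)$ from Remark~\ref{bifunctor} and the composition rule in Definition~\ref{PA} — to an element $A(\mu_\varphi)(t_G)\in A(GL'G)$ for an explicit biset $\mu_\varphi$ built from $\varphi$ and diagonal embeddings. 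I would leave this last identification as a routine (if tedious) biset computation.

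The main obstacle is not conceptual but bookkeeping: verifying that $\rho$ really is a functor out of the full biset category $R\CD$ (and not just a functor respecting the "group-like" morphisms) requires checking that $\upsilon_A$ carries composition of bisets to $\circ$ in $\cp_{CA}$ for \emph{all} bisets, including those involving deflation, whereas Proposition~\ref{natural xi} only supplies naturality and composition-compatibility \emph{inside} $\cp_{CA}$. Once one grants — as the excerpt does, via $\rho_{CA}$ being a functor on $\cp_{CA}$ together with $\upsilon_A\colon RB\to A$ being a morphism of Green biset functors, hence inducing $\cp_{RB}\to\cp_{CA}$ — that this holds, the proof is essentially the observation that a hom-functor of functor categories is bifunctorial. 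So I expect the write-up to be short, pushing all the genuine labour back onto Proposition~\ref{natural xi} and the already-established adjunction machinery.
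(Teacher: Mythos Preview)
Your proposal is correct and follows essentially the same route as the paper: define the action of $u\in RB(L,K)$ on $t\in ZA(K)$ by post-composing $t:\Id\to\rho_K$ with the natural transformation $\rho_{u_A}:\rho_K\to\rho_L$ furnished by Proposition~\ref{natural xi}, where $u_A=\upsilon_{LK}(u)\in CA(LK)$ by Corollary~\ref{bisets commute}, and deduce functoriality from the fact that $\rho_{CA}$ is a functor (Proposition~\ref{natural xi}(2)) precomposed with the functor $R\CD\cong\cp_{RB}\to\cp_{CA}$ induced by $\upsilon$. Your write-up is somewhat more explicit than the paper's about the factorisation through $\cp_{CA}$ and about the translation into the $Cr_A$ model, but the argument is the same.
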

\begin{proof} First $ZA(L)$ is obviously an $R$-module, for any $L\in\CD$. Let $K\in \CD$ and $t\in ZA(K)$, i.e. $t$ is a natural transformation $\Id\to\rho_K$ of endofunctors of the category~$\cp_A$. If $L\in\CD$ and $u\in RB(LK)$, let $u_A=\upsilon_{LK}(u)\in A(LK)$ be the image of $u$ under the unique morphism of Green functor $\upsilon:RB\to A$. Since $u_A\in CA(LK)$, by Corollary~\ref{bisets commute}, we can compose $t$ with the natural transformation $\rho_{u_A}:\rho_K\to \rho_L$ from Proposition~\ref{natural xi}, to get a natural transformation $\rho_{u_A}\circ t:\Id\to \rho_L$, i.e. an element of $ZA(L)$. Hence we get a linear map
$$u\in RB(LK)\mapsto \Big(t\mapsto \rho_{u_A}\circ t\in\Hom_R\big(ZA(K),ZA(L)\big)\Big),$$
and Assertion~2 of Proposition~\ref{natural xi} shows that this endows $ZA$ with a structure of biset functor.
\end{proof}

We now build a product on $ZA$, to make it a Green biset functor. For $K,L\in \CD$, let $s\in ZA(K)$ and $t\in ZA(L)$. Since $s$ is  a natural transformation $\Id\to \rho_K$, we get, by adjunction, a natural transformation $s^o:\rho_K\to\Id$. By composition with $t:\Id\to \rho_L$, we obtain a natural transformation $t\circ s^o:\rho_K\to\rho_L$, which in turn, by adjunction again, gives a natural transformation $^o(t\circ s^o):\Id\to (\rho_L)_K\cong\rho_{LK}$, i.e. an element of $ZA(LK)$. So we set
\begin{equation}\label{prod ZA}\forall s\in ZA(K),\;\forall t\in ZA(L),\;\;t\times s={^o}(t\circ s^o)\in ZA(LK).
\end{equation}

Translating this in the terms of Proposition~\ref{ZA Cr} gives:

\begin{lema} \label{product Cr}Let $s\in ZA(K)$ and $t\in ZA(L)$ be defined respectively by families of elements $s_G\in A(GKG)$ and $t_G\in A(GLG)$, for $G\in \CD$. Then $t\times s$ is the element of $ZA(LK)$ defined by the family $(t\times s)_G=t_G\circ s_G\in A(GLKG)$, for $G\in \CD$.
\end{lema}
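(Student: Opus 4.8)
The statement to prove, Lemma~\ref{product Cr}, asks us to unwind the abstract definition~(\ref{prod ZA}) of the product $t\times s$ in terms of the concrete component description of $ZA$ given in Proposition~\ref{ZA Cr}. The strategy is entirely computational but organised around the three adjunction/composition steps that build $t\times s$: first take $s\in ZA(K)$ (i.e. $s:\Id\to\rho_K$) and pass to $s^o:\rho_K\to\Id$ by adjunction; then compose with $t:\Id\to\rho_L$ to get $t\circ s^o:\rho_K\to\rho_L$; then pass back by adjunction to ${}^o(t\circ s^o):\Id\to\rho_L\rho_K\cong\rho_{LK}$. I would identify, at each stage, what the relevant component morphism at a group $G\in\CD$ is, using the explicit formulas for $\theta_L$, $\psi_L$ and the adjunction isomorphisms established in the proof of Theorem~\ref{adjonctions-enonce}, together with the component description of natural transformations $\rho_K\to\rho_L$ given in the last Lemma before this statement.

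First I would recall from that Lemma that a natural transformation $\rho_K\to\rho_L$ corresponding to an element of $ZA(LK)$ with components $w_G\in A(GLKG)$ has component morphisms $w^o_G=\Iso^{GLGK}_{GLKG}(w_G)\in A(GLGK)=\Hom_{\cp_A}(GK,GL)$; and dually, a natural transformation $\rho_K\to\Id$ is described by elements of $A(GGK)$, viewed as morphisms from $GK$ to $G$ in $\cp_A$. Applying this to $s\in ZA(K)$ with components $s_G\in A(GKG)$, the adjoint $s^o:\rho_K\to\Id$ has component at $G$ equal to $\Iso^{GGK}_{GKG}(s_G)\in A(GGK)=\Hom_{\cp_A}(GK,G)$. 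Next, the composite $t\circ s^o:\rho_K\to\rho_L$ has component at $G$ obtained by composing in $\cp_A$ the morphism $s^o_G:GK\to G$ with $t_G:G\to GL$ (viewing $t_G\in A(GLG)$ as a morphism $G\to GL$); this composite lies in $\Hom_{\cp_A}(GK,GL)=A(GLGK)$. Finally, passing back by adjunction to ${}^o(t\circ s^o):\Id\to\rho_{LK}$ undoes the $\Iso$ of the preceding Lemma, producing a component in $A(G\,LK\,G)$. Tracking the isomorphisms of groups through these steps, the net effect should be exactly the composition $t_G\circ s_G$ taken in $\cp_A$ as a morphism $G\to GLK$, i.e. the element of $A(GLKG)$ given by the composition formula of Definition~\ref{PA} applied to $s_G\in A(GKG)$ and $t_G\in A(GLG)$.

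The main obstacle, and the only real content beyond bookkeeping, is keeping the cartesian-product factors in the correct order as one passes through the two adjunction isomorphisms and the composition in $\cp_A$: each adjunction swaps an $L$ (or $K$) factor past a $G$ factor, and the composition in $\cp_A$ involves a $\Def\,\Res$ along a diagonal subgroup, so one must check that the shuffle isomorphisms compose to the identity on $A(GLKG)$ and that no spurious deflation survives. I would handle this by writing $s_G\circ_G$-type compositions explicitly with the biset notation $\Def^{\,GLGKG}_{\ldots}\Res$ as in~(\ref{def Cr}) and Proposition~\ref{ZA Cr}, invoking the standard relations among $\Iso$, $\Res$, $\Def$, $\Ind$ (Section~1.1.3 and Lemma~2.3.26 of~\cite{biset}) to collapse the expression. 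Once the component $(t\times s)_G$ is shown to equal $t_G\circ s_G$ in $A(GLKG)$, the naturality conditions for $t\times s$ as an element of $ZA(LK)$ follow automatically since $t\times s$ was constructed as a composite of natural transformations; this completes the proof.
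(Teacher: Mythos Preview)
Your proposal is correct and follows essentially the same route as the paper's own proof: both unwind the definition ${}^o(t\circ s^o)$ by writing $s^o_G=A(\Iso_{GKG}^{GGK})(s_G)$, forming the $\cp_A$-composition with $t_G$, reapplying the outer $\Iso$, and then collapsing the resulting $\Iso\,\Def\,\Res\,\Iso$ string via the standard biset relations to obtain $A(\Def_{GLKG}^{GL\underline{G}KG}\Res_{GL\underline{G}KG}^{GL\underline{GG}KG})(t_G\times s_G)=t_G\circ s_G$. The paper's computation is just a slightly more compact version of the bookkeeping you outline.
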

\begin{proof} \label{traduction}As the adjunction $s\mapsto s^o$ amounts to switching the last two components of $GKG$, the element $t\times s={^o}(t\circ s^o)$ is defined by the family
\begin{align*}
(t\times s)_G&=A(\Iso_{GLGK}^{GLKG})\big(t_G\circ A(\Iso_{GKG}^{GGK})(s_G)\big)\\
&=A(\Iso_{GLGK}^{GLKG})A(\Def_{GLGK}^{GL\sou{G}GK}\Res_{GL\sou{G}GK}^{GL\sou{GG}GK})\big(t_G\times A(\Iso_{GKG}^{GGK})(s_G)\big),\\
\end{align*}
where the notation $\Def_{GLGK}^{GL\sou{G}GK}$ means that we take deflation with respect to the underlined factor, and $\rule{0ex}{3ex}\Res_{GL\sou{G}GK}^{GL\sou{GG}GK}$ means that the underlined $G$ in subscript embeds diagonally in the underlined group $GG$ in superscript. It follows that
\begin{align*}
(t\times s)_G&=A(\Def_{GLKG}^{GL\sou{G}KG}\Iso_{GLGGK}^{GLGKG}\Res_{GL\sou{G}GK}^{GL\sou{GG}GK}\Iso_{GLGGKG}^{GLGGGK})(t_G\times s_G)\\
&=A(\Def_{GLKG}^{GL\sou{G}KG}\Res_{GL\sou{G}KG}^{GL\sou{GG}KG})(t_G\times s_G)\\
&=t_G\circ s_G\in A(GLKG).
\end{align*}
\end{proof}

\begin{nota} Let $A$ be a Green $\CD$-biset functor, and $G,H,K,L\in \CD$. For morphisms in $\cp_A$,   namely $\alpha :G\to H$ in $A(HG)$ and $\beta :K\to L$ in $A(LK)$, we denote by $\alpha\boxtimes\beta:GK\to HL$ the morphism defined by 
$$\alpha\boxtimes \beta=A(\Iso_{HGLK}^{HLGK})(\alpha\times\beta)\in A(HLGK).$$
\end{nota}
\begin{prop} \label{produits} Let $A$ be a Green $\CD$-biset functor, and $G,H,K,L\in \CD$. Let moreover $\alpha\in CA(HG)$ and $\beta\in CA(LK)$. Then for any $s\in ZA(G)$ and $t\in ZA(K)$, and for any $X\in\CD$
$$(\rho_\alpha\circ s)_X\circ (\rho_\beta\circ t)_X=\big(\rho_{\alpha\boxtimes\beta}\circ(s\times t)\big)_X.$$
\end{prop}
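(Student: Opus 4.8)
The plan is to reduce the assertion to a single identity in $ZA(HL)$ and then to compute. Since an element of $ZA(HL)$ is determined by its components, and since—by Lemma~\ref{product Cr}—the map $X\mapsto(\rho_\alpha\circ s)_X\circ(\rho_\beta\circ t)_X$ is precisely the family of components of the product $(\rho_\alpha\circ s)\times(\rho_\beta\circ t)\in ZA(HL)$, while $X\mapsto\big(\rho_{\alpha\boxtimes\beta}\circ(s\times t)\big)_X$ is the family of components of $\rho_{\alpha\boxtimes\beta}\circ(s\times t)\in ZA(HL)$, the statement (for all $X$) is equivalent to
\[
(\rho_\alpha\circ s)\times(\rho_\beta\circ t)=\rho_{\alpha\boxtimes\beta}\circ(s\times t)\qquad\text{in }ZA(HL).
\]
Here $\alpha\boxtimes\beta\in CA(HLGK)$, because $CA$ is a Green $\CD$-biset subfunctor of $A$ and is therefore closed under $\times$ and under the biset operations; hence $\rho_{\alpha\boxtimes\beta}$ is defined, by Proposition~\ref{natural xi}.

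To prove this identity, I would work componentwise. Fix $X\in\CD$. By Proposition~\ref{natural xi} the $X$-component of $\rho_\alpha\circ s$ is the composite $\lambda_X(\alpha)\circ s_X=(X\times\alpha)\circ s_X$ in $\cp_A$ (with $s_X$ the $X$-component of $s$), and similarly $(\rho_\beta\circ t)_X=(X\times\beta)\circ t_X$. Substituting these into the product formula of Lemma~\ref{product Cr} and using associativity of composition in $\cp_A$, the left-hand side unfolds into a composite in $\cp_A$ of the four ``elementary'' morphisms obtained from $\alpha$, $\beta$, $s_X$, $t_X$ by applying $\lambda$, $\rho$ and the structural isomorphisms. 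On the right, $(s\times t)_X=s_X\circ t_X$ (suitably reindexed) again by Lemma~\ref{product Cr}, so $\big(\rho_{\alpha\boxtimes\beta}\circ(s\times t)\big)_X=\lambda_X(\alpha\boxtimes\beta)\circ(s_X\circ t_X)$; unwinding $\lambda_X(\alpha\boxtimes\beta)$ via the Notation preceding this Proposition and the definition of $\lambda_X$ produces a composite of the same four elementary pieces, in a different order. The equality of the two orderings is exactly what Proposition~\ref{commute} provides: since $\alpha\in CA(HG)$ and $\beta\in CA(LK)$ commute, a factor carrying $\alpha$ can be slid past a factor carrying $\beta$ (concretely, the commutation of $\alpha\times\beta$ with a switch isomorphism, as in the proof of Proposition~\ref{commute}), which rearranges one composite into the other.

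A shorter, more conceptual route is to transport everything through the isomorphisms $ZA(PQ)\cong\Hom_{\End_R(\cp_A)}(\rho_Q,\rho_P)$ and $\rho_H\circ\rho_L\cong\rho_{H\times L}$ from the lemma preceding Proposition~\ref{zisgbf}: under these, the definition~(\ref{prod ZA}) of the product of $ZA$ says that $\times$ becomes composition of natural transformations, and composing with $\rho_\gamma$ (for $\gamma\in CA$) corresponds to composing with the natural transformation $\rho_\gamma$. The identity then becomes an instance of the interchange (Godement) law for natural transformations of endofunctors of $\cp_A$, once one knows that the horizontal composite $\rho_\alpha\ast\rho_\beta$ equals $\rho_{\alpha\boxtimes\beta}$ (after the canonical identification above)—which is again Proposition~\ref{commute} applied to the commuting pair $\alpha,\beta$; this is also consistent with $\rho_{CA}:\cp_{CA}\to\End_R(\cp_A)$ being monoidal, $\cp_{CA}$ being symmetric monoidal by Corollary~\ref{PA monoidal} since $CA$ is commutative.

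The main obstacle is purely bookkeeping. In either route one must keep careful track of all the ``obvious'' permutation isomorphisms of cartesian products—those hidden in $\rho_H\circ\rho_L\cong\rho_{H\times L}$, in $\boxtimes$, in the adjunction bijection $w\mapsto w^o$, and in Lemma~\ref{product Cr}—so that the reorderings supplied by Proposition~\ref{commute} and by associativity assemble into precisely the two sides of the stated equality. None of the individual ingredients (associativity of composition in $\cp_A$, functoriality of biset functors, the Mackey formula and the standard relations between elementary bisets, and the commutation relations for elements of $CA$) is hard on its own; it is their accurate coordination that takes the work.
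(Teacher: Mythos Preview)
Your proposal is correct and, in its first route, aligns with what the paper does: the paper's own proof is literally ``rather lengthy but straightforward calculations on bisets \ldots\ We leave it as an exercice,'' so your componentwise reduction via Lemma~\ref{product Cr} and the commutation step from Proposition~\ref{commute} is exactly the kind of argument the authors have in mind, only organized one level higher (you invoke already-proved lemmas rather than re-expanding everything into Mackey-formula manipulations).

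Your second, conceptual route through $\End_R(\cp_A)$ and the interchange law is a genuine addition. The paper does not take this path; it sets up the pieces (the fully faithful $\rho_{ZA}$, the functor $\rho_{CA}$, the monoidal structure on $\cp_{CA}$) but proves Proposition~\ref{produits} by brute force before those pieces are assembled. Your observation that the identity is an instance of Godement interchange, once one checks $\rho_\alpha\ast\rho_\beta\cong\rho_{\alpha\boxtimes\beta}$ under $\rho_K\rho_L\cong\rho_{KL}$, explains \emph{why} the calculation works and would make the surrounding theory (in particular the Green structure on $ZA$) more transparent. The trade-off is that this route front-loads the verification that $\rho_{CA}$ respects the monoidal structures, which is itself a biset calculation of the same flavor---so the total amount of bookkeeping is comparable, just redistributed.
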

\begin{proof} The proof amounts to rather lengthy but straighforward calculations on bisets, similar to those we already did several times above, e.g. in the proof of Theorem~\ref{adjonctions-enonce}. We leave it as an exercice.\end{proof}
\begin{teo} Let $A$ be a Green $\CD$-biset functor. Then $ZA$, endowed with the product defined in~(\ref{prod ZA}), is a Green $\CD$-biset functor.
\end{teo}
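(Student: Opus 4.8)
The plan is to verify the three axioms of Definition~\ref{defgreen} for the product defined in~(\ref{prod ZA}), using throughout the concrete description of $ZA$ furnished by Proposition~\ref{ZA Cr} and the formula for the product given in Lemma~\ref{product Cr}: an element $s\in ZA(K)$ is a family $(s_G)_{G\in\CD}$ with $s_G\in A(GKG)$ satisfying $\alpha\circ_G s_G=s_H\circ_H\alpha$, and $(t\times s)_G=t_G\circ s_G$, the composition being taken in $\cp_A$. So the whole verification is transported to a computation inside the categories $\cp_A$.

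First I would check that the product $\times$ on $ZA$ really does land in $ZA$, i.e. that the family $(t_G\circ s_G)_{G\in\CD}$ satisfies the naturality condition~(\ref{def Cr}) for $\rho_{LK}$: this is immediate from associativity of composition in $\cp_A$ together with the naturality of $s$ and of $t$ separately, after identifying $\rho_L\circ\rho_K$ with $\rho_{LK}$ via Assertion~1 of the last Lemma. Next, associativity of $\times$ on $ZA$ (up to the canonical isomorphism $G\times(H\times K)\cong(G\times H)\times K$, applied componentwise) reduces to associativity of composition of the three natural transformations $\rho_{J}\xrightarrow{\ } \Id$, $\Id\xrightarrow{\ }\rho_L$, etc., or equivalently, componentwise, to the associativity of the composition $u_G\circ(t_G\circ s_G)=(u_G\circ t_G)\circ s_G$ in $\cp_A$; the only thing to watch is that the coherence isomorphisms $\rho_L\circ\rho_K\cong\rho_{LK}$ are themselves compatible, which is the content of the straightforward verification underlying Assertion~1 of the last Lemma. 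For the identity element I would take $\varepsilon_{ZA}\in ZA(1)$ to be the identity natural transformation of $\Id:\AMod{A}\to\AMod{A}$, which under Proposition~\ref{ZA Cr} corresponds to the family $(\varepsilon_G)_{G\in\CD}$ of identity morphisms of $\cp_A$; then $t\times\varepsilon_{ZA}$ and $\varepsilon_{ZA}\times t$ are given componentwise by $t_G\circ\varepsilon_G=t_G$ and $\varepsilon_G\circ t_G=t_G$, so the identity axiom holds on the nose (the canonical isomorphisms $1\times G\to G$ and $G\times 1\to G$ matching the identifications already built into $\rho_1\cong\Id$).

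The remaining axiom is functoriality (axiom~3): for morphisms $\varphi:K\to K'$ and $\psi:L\to L'$ in $R\CD$ one must show $ZA(\varphi\times\psi)(t\times s)=ZA(\varphi)(t)\times ZA(\psi)(s)$. By the way the biset functor structure on $ZA$ was built in Proposition~\ref{zisgbf}, $ZA(u)$ for $u\in RB(L'L)$ acts on $t\in ZA(L)$ as post-composition with the natural transformation $\rho_{u_A}:\rho_L\to\rho_{L'}$ attached to the element $u_A=\upsilon_{L'L}(u)\in CA(L'L)$; concretely $(ZA(u)t)_G=(\rho_{u_A}\circ t)_G = (G\times u_A)\circ t_G$. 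Feeding this into the formula for $\times$, axiom~3 becomes exactly the statement that for all $X\in\CD$
$$\big(\rho_{\psi_A}\circ t\big)_X\circ\big(\rho_{\varphi_A}\circ s\big)_X=\big(\rho_{\psi_A\boxtimes\varphi_A}\circ(t\times s)\big)_X,$$
together with the identification of $\psi_A\boxtimes\varphi_A\in CA(L'K'\times LK)$ with the image under $\upsilon$ of $\psi\times\varphi\in RB\big((L'\times K')\times(L\times K)\big)$ — the latter being a routine consequence of the fact that $\upsilon$ is a morphism of Green functors and commutes with the $\Iso$ rearranging the factors. The first equality is precisely Proposition~\ref{produits} (applied with $\alpha=\psi_A$, $\beta=\varphi_A$), which we are entitled to invoke. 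The main obstacle is therefore not in any of the three axioms per se but in having Proposition~\ref{produits} at our disposal: that proposition encapsulates the one genuinely delicate biset computation — reconciling the ``diagonal'' composition in $\cp_{A_K}$-type categories (which enters through the adjunction defining $t\times s$) with the cartesian-product rearrangement $\boxtimes$ — and once it is granted, each axiom collapses to an identity about composition in $\cp_A$. I would close by remarking that, just as in Proposition~\ref{zisgbf}, the compatibility of $\rho_{CA}$ with composition (Proposition~\ref{natural xi}(2)) is what guarantees that post-composing with $\rho_{u_A}$ respects composition of natural transformations, so that no further coherence check is needed.
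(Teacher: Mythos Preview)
Your proposal is correct and follows essentially the same route as the paper's own proof: associativity and the identity axiom are reduced, via Lemma~\ref{product Cr} and Proposition~\ref{ZA Cr}, to associativity and identities of composition in $\cp_A$, and functoriality is obtained as a direct application of Proposition~\ref{produits}. You add a few details the paper leaves implicit (that the product lands in $ZA$, and the matching of $\psi_A\boxtimes\varphi_A$ with $\upsilon(\psi\times\varphi)$), but the structure of the argument is the same.
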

\begin{proof} It is clear from Lemma~\ref{product Cr} and Proposition~\ref{ZA Cr} that the product on $ZA$ is associative. Moreover the identity transformation from the identity functor to $\rho_1=\Id_{\cp_A}$ is obviously an identity element for the product on $ZA$. This product is also $R$-bilinear by construction. Finally, the equality $ZA(U)(s)\times ZA(V)(t)=ZA(U\boxtimes V)(s\times t)$ for bisets $U$ and $V$ is a special case of Proposition~\ref{produits}.
\end{proof}
\subsection{Relations between the commutant and the center}

\begin{prop} \label{CA ZA A}Let $A$ be a Green $\CD$-biset functor. 
\begin{enumerate}
\item The maps sending $\alpha\in CA(L)$ to $\rho_\alpha\in ZA(L)$, for $L\in \CD$, define a morphism of Green biset functors $\iota_A:CA\to ZA$.
\item The maps sending $t\in Cr_A(L)\cong ZA(L)$ to $t_1\in A(L)$, for $L\in \CD$, define a morphism of Green biset functors $\pi_A:ZA\to A$. The image of this morphism in the component 1 lies in $Z(A(1))$, hence there is a morphism of rings $\pi_{A,\,1}:ZA(1)\rightarrow Z(A(1))$.
\item The composition 
$$\xymatrix{
CA\;\ar@{^(->}[r]^-{\iota_A}&ZA\ar[r]^-{\pi_A}&A
}$$
is equal to the inclusion $\xymatrix{CA\;\ar@{^(->}[r]&A}$. In particular $\iota_A$ is injective.
\end{enumerate}
\end{prop}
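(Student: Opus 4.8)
The plan is to verify the three assertions in order, using the concrete descriptions of $CA$, $ZA$, and the morphisms involved that were established earlier in the excerpt, especially Propositions~\ref{natural xi} and~\ref{produits}, and Lemma~\ref{product Cr}.

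For Assertion~1, I would first observe that sending $\alpha\in CA(L)$ to $\rho_\alpha$ (viewed, via Proposition~\ref{ZAZA'} and Proposition~\ref{ZA Cr}, as an element of $ZA(L)$) is exactly the composition of $\alpha\in CA(L)=\Hom_{\cp_{CA}}(1,L)$ with the faithful functor $\rho_{CA}:\cp_{CA}\to\End_R(\cp_A)$ of Proposition~\ref{natural xi}(2) applied at the trivial group. Concretely, the $G$-component of $\iota_A(\alpha)$ is $G\times\alpha\in A(GLG)$ (after identifying $G\times 1$ with $G$), which by Proposition~\ref{natural xi}(1) does define a natural transformation $\rho_1\to\rho_L$, hence an element of $ZA(L)$. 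Linearity in $\alpha$ and $R$-linearity are immediate. To see $\iota_A$ is a morphism of \emph{biset} functors, I would check naturality with respect to $u\in RB(L'L)$: both $ZA$ and $CA$ have their biset-functor structure built via $\upsilon$ and the composition with $\rho_{u_A}$ (for $ZA$, by the proof of Proposition~\ref{zisgbf}), so the square commutes essentially by functoriality of $\rho_{CA}$. For the multiplicativity $\iota_A(\alpha\times\beta)=\iota_A(\alpha)\times\iota_A(\beta)$ and the preservation of the identity element, I would invoke Proposition~\ref{produits} with $s,t$ taken to be the identity transformations (i.e. $s=\iota_A(\varepsilon_1)$ essentially), which yields $(\rho_\alpha)_X\circ(\rho_\beta)_X=(\rho_{\alpha\boxtimes\beta})_X$; combined with Lemma~\ref{product Cr} and the definition of the product on $ZA$, and noting that $\alpha\boxtimes\beta$ restricted appropriately recovers $\alpha\times\beta$ up to the canonical isomorphism identifying the relevant groups, this gives multiplicativity. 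The identity element $\varepsilon\in CA(1)$ maps to the identity transformation $\rho_1\to\rho_1$, which is the unit of $ZA$.

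For Assertion~2, the map $\pi_A$ sends $t=(t_G)_{G\in\mathbf D}\in Cr_A(L)\cong ZA(L)$ to $t_1\in A(1\times L\times 1)\cong A(L)$. To see this is a morphism of biset functors, I would unwind how the biset-functor structure of $ZA$ acts on components: applying $ZA(u)$ for $u\in RB(L'L)$ composes $t$ with $\rho_{u_A}$, whose $1$-component is (after identification) just $u_A=\upsilon_{L'L}(u)$; so the $1$-component of $ZA(u)(t)$ is $u_A\circ t_1$, computed in $\cp_A$, which is precisely $A(u)(t_1)$ by the definition of composition in $\cp_A$ when one factor comes from a biset. Hence $\pi_A$ commutes with the biset action. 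For multiplicativity, Lemma~\ref{product Cr} says $(t\times s)_1=t_1\circ s_1\in A(1\times L\times K\times 1)\cong A(L\times K)$, and I would check that this composition in $\cp_A$ of the $1$-components is exactly $t_1\times s_1$ in $A$; this is the defining formula for composition in $\cp_A$ (Definition~\ref{PA}) with $G$ trivial, which degenerates to the external product $\times$. The identity element of $ZA$ has $1$-component equal to $\varepsilon_1$, the identity morphism of the trivial group in $\cp_A$, which corresponds to $\varepsilon_A\in A(1)$, so $\pi_A$ preserves units. Finally, that the image of $\pi_{A,1}$ lands in $Z(A(1))$: an element $t\in ZA(1)$ satisfies $\alpha\circ_1 t_1 = t_1\circ_1\alpha$ for all $\alpha\in A(1\times 1)=A(1)$ by Equation~(\ref{def Cr}) specialized to $G=H=1$; since composition in $\cp_A$ at the trivial group is the ring multiplication of $A(1)$, this says precisely that $t_1$ commutes with every element of $A(1)$, i.e. $t_1\in Z(A(1))$. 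Since $\pi_A$ is a morphism of Green biset functors, $\pi_{A,1}$ is then a ring homomorphism $ZA(1)\to Z(A(1))$.

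For Assertion~3, I would simply compose: for $\alpha\in CA(L)$, the element $\iota_A(\alpha)\in ZA(L)$ has $G$-component $G\times\alpha$, so its $1$-component is $1\times\alpha\in A(1\times L\times 1)$, which under the canonical identification $A(1\times L\times 1)\cong A(L)$ is just $\alpha$ itself (this is exactly the observation used at the end of the proof of Proposition~\ref{natural xi}(2) to show faithfulness). Hence $\pi_A\circ\iota_A$ is the identity on each $CA(L)$, i.e. it is the inclusion $CA\hookrightarrow A$. Injectivity of $\iota_A$ follows at once, since it has a left inverse. The only genuine obstacle I anticipate is the verification in Assertion~1 that $\iota_A$ is multiplicative: it requires matching the product on $ZA$ (defined via the double adjunction in Equation~(\ref{prod ZA}), concretely via Lemma~\ref{product Cr}) with the external product $\times$ on $CA$, and this is where Proposition~\ref{produits} — whose proof the authors deferred as an exercise — does the real work; everything else is a matter of carefully tracking the canonical isomorphisms identifying $G\times 1$, $1\times G$, etc., with $G$.
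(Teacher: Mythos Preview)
Your proposal is correct and follows essentially the same route as the paper: Lemma~\ref{product Cr} for multiplicativity of $\pi_A$, the commutation condition~(\ref{def Cr}) at $G=H=1$ for the image landing in $Z\big(A(1)\big)$, and evaluation of the $G$-component at $G=1$ for Assertion~3. The one minor difference is in Assertion~1: where the paper computes $\rho_{\alpha,G}\circ\rho_{\beta,G}$ directly via biset manipulations to obtain $\rho_{\alpha\times\beta,G}$, you instead invoke Proposition~\ref{produits} (with $s,t$ the identity and $G=K=1$), which packages exactly that computation; this is slightly more economical and entirely legitimate, since Proposition~\ref{produits} is stated before Proposition~\ref{CA ZA A}.
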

\begin{proof} For Assertion 1, let $\alpha\in CA(K)$, for $K\in\CD$. Then the element $\rho_\alpha$ of $ZA(K)$ corresponds to the family of elements $\rho_{\alpha,G}\in A(GKG)$, for $G\in\CD$, defined by
$$\rho_{\alpha,G}=A(\Ind_{KG}^{GKG}\Inf_{K}^{KG})(\alpha).$$
Similarly, if $L\in\CD$ and $\beta\in CA(L)$, the element $\rho_\beta$ of $ZA(L)$ corresponds to the family $\rho_{\beta,G}=A(\Ind_{LG}^{GLG}\Inf_{L}^{LG})(\beta)$. By Lemma~\ref{product Cr}, the product $q=\rho_\alpha\times \rho_\beta$ in $ZA(KL)$ corresponds to the family
\begin{align*}
q_G&= \rho_{\alpha,G}\circ \rho_{\beta,G}\\
&= A(\Ind_{KG}^{GKG}\Inf_{K}^{KG})(\alpha)\circ A(\Ind_{LG}^{GLG}\Inf_{L}^{LG})(\beta)\\
&= A(\Def_{GKLG}^{GK\sou{G}LG}\Res_{GK\sou{G}LG}^{GK\sou{GG}LG})\big(A(\Ind_{KG}^{GKG}\Inf_{K}^{KG})(\alpha)\times A(\Ind_{LG}^{GLG}\Inf_{L}^{LG})(\beta)\big)\\
&=A(\Def_{GKLG}^{GK\sou{G}LG}\Res_{GK\sou{G}LG}^{GK\sou{GG}LG}\Ind_{KGLG}^{GKGGLG}\Inf_{KL}^{KGLG})(\alpha\times\beta).
\end{align*}
Standard relations in the composition of bisets then show that
$$q_G=A(\Ind_{KLG}^{GKLG}\Inf_{KL}^{KLG})(\alpha\times\beta),$$
and it follows that $q=\rho_{\alpha\times\beta}$. In other words $\iota_A(\alpha\times\beta)=\iota_A(\alpha)\times\iota_A(\beta)$. Moreover, the identity element $\varepsilon_A\in CA(1)$ is mapped by $\iota_A$ to the element of $ZA(1)$ defined by the family of elements $A(\Ind_{G}^{GG}\Inf_1^G)(\varepsilon_A)$, for $G\in\CD$, that is the identity element of $ZA$. So $\iota_A$ is a morphism of Green $\CD$-biset functors.\mp
The first part of Assertion 2 is a consequence of Lemma~\ref{product Cr}. Indeed, if $K,L\in \CD$, if $s\in ZA(K)$ corresponds to the family $s_G\in Cr_A(K)$, and if $t\in ZA(L)$ corresponds to the family $\beta_G\in Cr_A(L)$, for $G\in \CD$, then the product $u=s\times t$ is the element of $ZA(KL)$ corresponding to the family $u_G=s_G\circ t_G$. In particular, for $G=1$, we have
$$u_1=s_1\circ t_1=s_1\times t_1.$$
This shows that the maps sending $t\in ZA(L)$ to $t_1\in A(L)$, for $L\in\CD$, is a morphism of Green $\CD$-biset functors $\pi:ZA\to A$.\par
Since composition $\circ:A(1)\times A(1)\rightarrow A(1)$ coincides with the product  of~$A(1)$ as a ring, the commutativity property defining the series of $Cr_A(1)$ shows that $\pi_{A,\, 1}$ has image in $Z(A(1))$. This completes the proof of Assertion~2.\mp
For Assertion 3, we start with an element $\alpha\in CA(L)$, for $L\in\CD$. It is sent by $\iota_A$ to the element $t\in ZA(L)$ corresponding to the family $t_G=A(\Ind_{LG}^{GLG}\Inf_{L}^{LG})(\alpha)$, for $G\in \CD$, in $Cr_A(L)$. In particular $t_1=A(\Ind_L^L\Inf_L^L)(\alpha)=\alpha$, so $\pi_A\circ \iota_A$ is equal to the inclusion $CA\hookrightarrow A$.
\end{proof}

The morphism $\iota_A$ of the previous proposition allows us to give a $CA$-module structure to $ZA$. With this structure, (the image under $\iota_A$ of) $CA$ is a $CA$-submodule of~$ZA$. In the particular case where $A$ is commutative, the previous proposition tells us more.

\begin{coro}
If $A$ is a commutative Green $\CD$-biset functor, then $A$ is isomorphic to a direct summand of $ZA$ in the category $A$-Mod.
\end{coro}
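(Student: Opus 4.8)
The plan is to derive this from Proposition~\ref{CA ZA A} by a formal splitting argument. Since $A$ is commutative we have $CA=A$, so that proposition furnishes morphisms of Green $\CD$-biset functors $\iota_A\colon A\to ZA$ and $\pi_A\colon ZA\to A$ satisfying $\pi_A\circ\iota_A=\Id_A$ (the inclusion $CA\hookrightarrow A$ being the identity when $A$ is commutative). Thus $\iota_A$ is a split monomorphism in ${\rm Green}_{\CD,R}$, and it remains to promote this to a splitting inside the abelian category $\AMod{A}$.

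First I would equip $ZA$ with the $A$-module structure induced by $\iota_A$ as explained right after Proposition~\ref{CA ZA A}: for $G,H\in\CD$, $a\in A(G)$ and $t\in ZA(H)$ the action is $a\times t:=\iota_A(a)\times t$, the right-hand product being the Green functor product on $ZA$. With $A$ carrying its regular $A$-module structure, I would then check that $\iota_A$ and $\pi_A$ are morphisms of $A$-modules. For $\iota_A$ this is immediate from multiplicativity: $\iota_A(a\times a')=\iota_A(a)\times\iota_A(a')=a\times\iota_A(a')$ for $a\in A(G)$, $a'\in A(H)$. For $\pi_A$, multiplicativity together with $\pi_A\circ\iota_A=\Id_A$ gives $\pi_A(a\times t)=\pi_A\big(\iota_A(a)\times t\big)=\pi_A(\iota_A(a))\times\pi_A(t)=a\times\pi_A(t)$. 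Both verifications are purely formal, using only that morphisms of Green biset functors respect the products.

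Then I would conclude by the standard idempotent-splitting argument. The endomorphism $e=\iota_A\circ\pi_A$ of the $A$-module $ZA$ is idempotent, since $e^2=\iota_A\circ(\pi_A\circ\iota_A)\circ\pi_A=\iota_A\circ\pi_A=e$. As $\AMod{A}$ is abelian, $e$ yields a direct sum decomposition $ZA\cong\Im(e)\oplus\ker(e)$ in $\AMod{A}$. Because $\iota_A$ is injective one has $\Im(e)=\Im(\iota_A)\cong A$, while $\ker(e)=\ker(\pi_A)$; hence $A$ is isomorphic to a direct summand of $ZA$ in $\AMod{A}$, the inclusion of this summand being realized by $\iota_A$.

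I do not expect any real obstacle here: the content is the formal observation that, over a commutative Green biset functor $A$, a split monomorphism of Green biset functors between $A$-modules is automatically split in $\AMod{A}$. The only mildly delicate point is confirming the $A$-linearity of $\iota_A$ and $\pi_A$ for the module structures at hand, which, as indicated above, is an immediate consequence of the multiplicativity built into the definition of a morphism of Green biset functors.
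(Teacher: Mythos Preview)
Your proof is correct and follows essentially the same route as the paper. The paper's argument is simply a terser version of yours: it notes that $\iota_A$ and $\pi_A$ are morphisms of Green $\CD$-biset functors, hence in particular morphisms of $A$-modules (for the $A$-module structure on $ZA$ induced by $\iota_A$, exactly as you set up), and that $\pi_A\circ\iota_A$ is the identity when $A$ is commutative; the idempotent-splitting step you spell out is left implicit.
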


\begin{proof} This follows from the fact that $\iota_A$ and $\pi_A$ are morphisms of Green $\CD$-biset functors, so in particular morphisms of $A$-modules. Moreover the composition $\pi_A\circ\iota_A$ is equal to the identity when $A$ is commutative.\end{proof}

\begin{prop} Let $A$ be a Green $\CD$-biset functor. Let $\End_R(\cp_A)$ be the category of $R$-linear endofunctors of $\cp_A$. 
\begin{enumerate}
\item The assignment
$$\left\{\begin{array}{rcl}K\in\CD&\mapsto& \rho_K\in\End_R(\cp_A)\\
t\in ZA(LK)&\mapsto& t^o\in\Hom_{\End_R(\cp_A)}(\rho_K,\rho_L)\end{array}\right.$$
is a fully faithful $R$-linear functor $\rho_{ZA}$ from $\cp_{ZA}$ to $\End_R(\cp_A)$.
\item The following assignment $\mu_A$
$$\left\{\begin{array}{rcl}K\in\CD&\mapsto& K\in\CD\\
\alpha\in CA(LK)&\mapsto& {^o\rho}_\alpha\in ZA(LK)\end{array}\right.$$
is equal to the functor $\cp_{\iota_A}$ from $\cp_{CA}$ to $\cp_{ZA}$, induced by $\iota_A:CA\to ZA$. In particular $\mu_A$ is faithful, and such that
$$\rho_{ZA}\circ \mu_A=\rho_{CA}.$$
\item The following assignment $\nu_A$ 
$$\left\{\begin{array}{rcl}K\in\CD&\mapsto& K\in\CD\\
s\in ZA(LK)&\mapsto& s_1\in A(LK)\end{array}\right.$$
is equal to the functor $\cp_{\pi_A}$ from $\cp_{ZA}$ to $\cp_{A}$ induced by the morphism of Green biset functors $\pi_A:ZA\to A$. The composition $\nu_A\circ \mu_A$ is equal to the inclusion functor $\cp_{CA}\to \cp_A$. 
\end{enumerate}
\end{prop}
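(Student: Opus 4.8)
The plan is to verify the three assertions in turn, each reducing to facts established earlier in the paper.

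For Assertion (1), the functor $\rho_{ZA}$ is already implicitly defined: by the last Lemma before Proposition~\ref{zisgbf}, the map $s\mapsto s^o$ is an $R$-module isomorphism $ZA(LK)\to \Hom_{\End_R(\cp_A)}(\rho_K,\rho_L)$, which on objects sends $K$ to $\rho_K$. So full faithfulness on hom-sets is exactly that lemma. What remains is to check that $\rho_{ZA}$ is actually a functor, i.e. that it respects composition and identities. That $\rho_K\circ\rho_L\cong\rho_{LK}$ (part (1) of that same Lemma) explains why the source and target categories of the hom-sets match up under the $\cp_{ZA}$-composition. The key computation is that for $s\in ZA(K)$, $t\in ZA(L)$, one has $(t\times s)^o = t^o\circ s^o$; but this is precisely Equation~(\ref{prod ZA}), which \emph{defines} $t\times s$ as ${}^o(t\circ s^o)$, so it is a tautology after unwinding the adjunction. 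Identities go to identities since the identity transformation of $\rho_1=\Id_{\cp_A}$ is the identity element of $ZA$. $R$-linearity is clear. So Assertion (1) is essentially a repackaging of the preceding Lemma together with the construction of the product on $ZA$.

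For Assertion (2), first recall that by definition $\cp_{\iota_A}$ is the identity on objects and sends $\alpha\in CA(LK)=\Hom_{\cp_{CA}}(K,L)$ to $(\iota_A)_{LK}(\alpha)\in ZA(LK)=\Hom_{\cp_{ZA}}(K,L)$. By Assertion (1) of Proposition~\ref{CA ZA A}, $(\iota_A)_L(\alpha)=\rho_\alpha$ for $\alpha\in CA(L)$; applied with $L$ replaced by $LK$ this gives $(\iota_A)_{LK}(\alpha)=\rho_\alpha$, which under the correspondence $ZA(LK)\cong\Hom(\rho_K,\rho_L)$ is exactly ${}^o\rho_\alpha$ in the notation of the statement. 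Hence $\mu_A=\cp_{\iota_A}$, and faithfulness of $\mu_A$ follows from injectivity of $\iota_A$ (Assertion (3) of Proposition~\ref{CA ZA A}), or directly from faithfulness of $\rho_{CA}$. The relation $\rho_{ZA}\circ\mu_A=\rho_{CA}$ is then the statement that composing $\alpha\mapsto\rho_\alpha\in ZA$ with $t\mapsto t^o$ recovers the natural transformation $\rho_\alpha:\rho_K\to\rho_L$ of Proposition~\ref{natural xi}, which again just unwinds the definition of $\rho_\alpha$ as the family $(G\times\alpha)_G$.

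For Assertion (3), $\cp_{\pi_A}$ is the identity on objects and sends $s\in ZA(LK)$ to $(\pi_A)_{LK}(s)$; by Assertion (2) of Proposition~\ref{CA ZA A}, $(\pi_A)_L(s)=s_1$, so $(\pi_A)_{LK}(s)=s_1\in A(LK)$, which is exactly the assignment $\nu_A$. Finally $\pi_A\circ\mu_A=\cp_{\pi_A}\circ\cp_{\iota_A}=\cp_{\pi_A\circ\iota_A}$, and by Assertion (3) of Proposition~\ref{CA ZA A} the morphism $\pi_A\circ\iota_A$ is the inclusion $CA\hookrightarrow A$, so the induced functor is the inclusion functor $\cp_{CA}\to\cp_A$. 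The only mild subtlety, and the step I expect to require the most care, is bookkeeping the various identifications—between $ZA(LK)$ and $\Hom_{\End_R(\cp_A)}(\rho_K,\rho_L)$ via $s\mapsto s^o$, between $ZA(L)$ and $Cr_A(L)$ via $t\mapsto (t_G)$, and between a morphism of Green functors and the induced functor on the associated categories $\cp_{(-)}$—and checking that they are all compatible; but no new computation beyond what is already in Propositions~\ref{natural xi}, \ref{CA ZA A} and the preceding Lemma is needed. Everything is formal once those identifications are laid out.
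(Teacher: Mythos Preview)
Your proposal is correct and follows the same route as the paper, whose proof is a single sentence (``All the assertions are straightforward consequences of Proposition~\ref{CA ZA A}''); you have simply unpacked that sentence and pointed at the relevant earlier statements (the Lemma preceding Proposition~\ref{zisgbf} for (1), and Proposition~\ref{CA ZA A} for (2) and (3)). One small caution on (1): the identity $(t\times s)^o=t^o\circ s^o$ you cite concerns the Green-functor product $\times$ on $ZA$, whereas functoriality of $\rho_{ZA}$ is a statement about the composition in $\cp_{ZA}$, which is built from $\times$ via the $\cp$-construction; bridging these is exactly the ``bookkeeping of identifications'' you flag, and it does go through, but it is not literally the tautology from Equation~(\ref{prod ZA}) alone.
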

\begin{proof} All the assertions are straightforward consequences of Proposition~\ref{CA ZA A}.
\end{proof}
To conclude this section, we now show that the isomorphism $CA_L\cong (CA)_L$ of Proposition~\ref{commsft} only extends to an injection $ZA_L\hookrightarrow (ZA)_L$. We first prove a lemma.
\begin{lema} \label{psipsi}Let $A$ be a Green $\CD$-biset functor. For $L\in\CD$, let $\psi_L^A:\cp_A\to \cp_{A_L}$ be the functor $\psi_L$ of Theorem~\ref{adjonctions-enonce}. If $K\in\CD$, let $\psi_K^{A_L}:\cp_{A_L}\to\cp_{(A_L)_K}$ be the similar functor built from $A_L$ and $K$. Then the diagram 
$$\xymatrix{
\cp_A\ar[r]^-{\psi_L^A}\ar[drr]_-{\psi_{KL}^A}&\cp_{A_L}\ar[r]^-{\psi_K^{A_L}}&\cp_{(A_L)_K}\ar[d]_-{e_{K,L}}^-\cong\\
&&\cp_{A_{KL}}
}
$$
of categories and functors, is commutative, where $e_{K,L}$ is the natural equivalence of categories $\cp_{(A_L)_K}\to \cp_{A_{KL}}$ provided by the canonical isomorphism of Green $\CD$-biset functors $(A_L)_K\cong A_{KL}$.
\end{lema}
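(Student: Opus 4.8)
The plan is to verify the commutativity of the triangle directly on objects and on morphisms, using the explicit formulas for the functors $\psi$ and for the canonical isomorphism $(A_L)_K\cong A_{KL}$. On objects everything is the identity: $\psi_L^A$, $\psi_K^{A_L}$ and $\psi_{KL}^A$ all act as the identity on finite groups, and $e_{K,L}$ likewise acts as the identity on objects (it is the equivalence induced by an isomorphism of Green biset functors, which is the identity on objects of the associated categories $\cp$). So the content is entirely in the morphisms.

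First I would recall the formula from the proof of Theorem~\ref{adjonctions-enonce}: for a morphism $\alpha\in A(HG)=\Hom_{\cp_A}(G,H)$, we have $\psi_L^A(\alpha)=A(\Inf_{HG}^{HGL})(\alpha)$, viewed as an element of $A_L(HG)=\Hom_{\cp_{A_L}}(G,H)$. Applying $\psi_K^{A_L}$ next gives $A_L(\Inf_{HG}^{HGK})\big(\psi_L^A(\alpha)\big)$; unwinding the identification $A_L(HGK)=A(HGKL)$, this is $A(\Inf_{HGL}^{HGKL})A(\Inf_{HG}^{HGL})(\alpha)=A(\Inf_{HG}^{HGKL})(\alpha)$, where the last equality uses the standard composition rule for two successive inflations (Section~1.1.3 of~\cite{biset}), the relevant surjections $HGKL\twoheadrightarrow HGL\twoheadrightarrow HG$ composing to the surjection $HGKL\twoheadrightarrow HG$ killing both $K$ and $L$. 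On the other side, $\psi_{KL}^A(\alpha)=A(\Inf_{HG}^{HG(KL)})(\alpha)\in A_{KL}(HG)=A(HGKL)$. So both composites produce the same element of $A(HGKL)$, once we check that the bookkeeping of which factor plays the role of the ``extra'' group matches up.

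That last point is exactly where $e_{K,L}$ enters and is the one place that needs care rather than being purely formal: the functor $\psi_K^{A_L}\circ\psi_L^A$ lands in $\cp_{(A_L)_K}$, whose hom-sets are $(A_L)_K(HG)=A_L(HGK)=A(HGKL)$, with the two shifting groups appearing \emph{in the order $K$ then $L$}, whereas $\psi_{KL}^A$ lands in $\cp_{A_{KL}}$ with hom-sets $A_{KL}(HG)=A(HG(K\times L))$. The canonical isomorphism $(A_L)_K\cong A_{KL}$ of Green biset functors is induced, component by component, by the identity map of the underlying $R$-modules together with the canonical identification of $(H\times K)\times L$ with $H\times(K\times L)$; I would check that $e_{K,L}$ is therefore the functor which is the identity on objects and which, on a morphism in $A((HG)KL)$, applies $A$ of the canonical isomorphism reassociating the cartesian factors. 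Composing this with the formula computed above for $\psi_K^{A_L}\circ\psi_L^A$ then gives precisely $A(\Inf_{HG}^{HG(KL)})(\alpha)$, matching $\psi_{KL}^A(\alpha)$, because inflation is natural with respect to these canonical isomorphisms of groups.

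The main (modest) obstacle is thus purely notational: keeping straight the several copies of $K$ and $L$ and verifying that the composition-of-inflations identity and the associativity isomorphism interact correctly, i.e. that the diagram of group homomorphisms
\begin{displaymath}
HGKL\longrightarrow HGL\longrightarrow HG,\qquad HG(KL)\longrightarrow HG
\end{displaymath}
is compatible via the reassociation $HGKL\cong HG(KL)$. Once this is in place, the commutativity $e_{K,L}\circ\psi_K^{A_L}\circ\psi_L^A=\psi_{KL}^A$ follows. I expect no real difficulty beyond this, since all the functors are $R$-linear and the verification on identity morphisms is immediate. One could alternatively phrase the whole argument conceptually: $\psi_L^A$ is the functor $\cp_{\Inf_L}$ attached to the morphism of Green biset functors $\Inf_L:A\to A_L$, the assignment $A\mapsto \cp_A$ is functorial in $A$, and the triangle of morphisms $A\to A_L\to (A_L)_K\cong A_{KL}$ commutes at the level of Green biset functors (which is the known transitivity of the inflation morphisms $\Inf$ up to the canonical identification); applying $\cp_{-}$ then yields the result. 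I would present the direct computation as the proof and perhaps mention this conceptual reformulation in a remark.
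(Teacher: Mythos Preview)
Your proposal is correct and follows essentially the same approach as the paper: check that all functors are the identity on objects, then compute on a morphism $\alpha\in A(HG)$ that $\psi_K^{A_L}\psi_L^A(\alpha)=A(\Inf_{HGL}^{HGKL})A(\Inf_{HG}^{HGL})(\alpha)=A(\Inf_{HG}^{HGKL})(\alpha)=\psi_{KL}^A(\alpha)$. You are in fact more careful than the paper about the role of $e_{K,L}$ (the paper silently identifies $(A_L)_K$ with $A_{KL}$), and your concluding conceptual reformulation via $\cp_{-}$ applied to the commutative triangle of inflation morphisms is a nice addition.
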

\begin{proof} Indeed, all the functors involved are the identity on objects. And for a morphism $\alpha:G\to H$ in $\cp_A$, i.e. an element $\alpha$ of $A(HG)$, we have 
\begin{align*}
\psi_K^{A_L}\psi_L^A(\alpha)&=\psi_K^{A_L}A(\Inf_{HG}^{HGL})(\alpha)=A_L(\Inf_{HG}^{HGK})A(\Inf_{HG}^{HGL})(\alpha)\\
&=A(\Inf_{HGL}^{HGKL})A(\Inf_{HG}^{HGL})(\alpha)\\
&=A(\Inf_{HG}^{HGKL})(\alpha)=\psi_{KL}^A(\alpha).
\end{align*}
\end{proof}

\begin{prop}
\label{centersft}
Let $A$ be a Green biset functor and $L\in\CD$. Then there is an injective morphism of Green $\CD$-biset functors from $ZA_L$ to $(ZA)_L$.
\end{prop}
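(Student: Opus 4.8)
The plan is to write down the required morphism explicitly through the description of the center given in Proposition~\ref{ZA Cr}, and to recognise it as the morphism induced by the adjunction of Theorem~\ref{adjonctions-enonce}; this is what makes its formal properties tractable. (Here $ZA_L$ denotes the center $Z(A_L)$ of the shifted Green functor $A_L$.) First I would unwind both sides via Proposition~\ref{ZA Cr}. Applied to $A_L$, it says that an element of $ZA_L(K)$ is a family $(t_G)_{G\in\CD}$ with $t_G\in A_L(GKG)=A(GKGL)$ satisfying $\alpha\circ_G t_G=t_H\circ_H\alpha$ (composition in $\cp_{A_L}$) for every $\alpha\in A_L(HG)=A(HGL)$. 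On the other side $(ZA)_L(K)=ZA(KL)$, so Proposition~\ref{ZA Cr} applied to $A$ says this is the set of families $(s_G)_{G\in\CD}$ with $s_G\in A(GKLG)$ satisfying $\beta\circ_G s_G=s_H\circ_H\beta$ (composition in $\cp_A$) for every $\beta\in A(HG)$.

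I would then set
$$\Phi_K\colon ZA_L(K)\longrightarrow (ZA)_L(K),\qquad (t_G)_G\longmapsto\big(A(\Iso_{GKGL}^{GKLG})(t_G)\big)_G,$$
the map which merely moves the extra copy of $L$ carried by $A_L(GKG)$ into the middle position. Conceptually, $\Phi_K$ is the componentwise description of the following composite: one whiskers a natural transformation $t\colon\Id_{\cp_{A_L}}\Rightarrow\rho_K^{A_L}$ with $\psi_L$ on the right to get $t\star\psi_L\colon\psi_L\Rightarrow\rho_K^{A_L}\psi_L$; transports it along the adjunction bijection $\Hom_{\cp_{A_L}}(\psi_L(G),H)\cong\Hom_{\cp_A}(G,\theta_L(H))$ of Theorem~\ref{adjonctions-enonce}, which on components is exactly the isomorphism $A(\Iso_{HGL}^{HLG})$; and finally transports along the canonical isomorphism $\theta_L\rho_K^{A_L}\psi_L\cong\rho_{KL}^A$, obtained by taking right adjoints in Lemma~\ref{psipsi} (legitimate by uniqueness of adjoints) and using the earlier isomorphisms $\rho_M\cong\theta_M\psi_M$ and $\rho_L\circ\rho_K\cong\rho_{KL}$.

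Injectivity of $\Phi_K$ is then immediate, since each $A(\Iso_{GKGL}^{GKLG})$ is an isomorphism of $R$-modules; equivalently, in the conceptual picture the only non-invertible step is $t\mapsto t\star\psi_L$, which is injective because $\psi_L$ is the identity on objects. What does require work is well-definedness, i.e. that $\Phi_K\big((t_G)_G\big)$ really satisfies the commutation condition defining $Cr_A(KL)$. For this I would apply the condition satisfied by $(t_G)$ to the morphisms of $\cp_{A_L}$ of the special form $\psi_L(\beta)=A(\Inf_{HG}^{HGL})(\beta)$ with $\beta\in A(HG)$, and expand the composition of $\cp_{A_L}$ (Definition~\ref{PA} for $A_L$, together with the description of the diagonal product $\times^d$) in terms of operations of $A$. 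Pushing the isomorphisms $\Iso_{\cdot}^{\cdot}$ past the exterior products then turns this exactly into $\beta\circ_G s_G=s_H\circ_H\beta$ in $\cp_A$. This is a biset computation of the same nature as those in the proof of Theorem~\ref{adjonctions-enonce}: its content is that the ``diagonal $L$'' appearing in the $\cp_{A_L}$-composition is precisely the one carried by the $A_L$-labels, and it uses only the standard relations between $\Ind$, $\Res$, $\Def$, $\Inf$ and $\Iso$ recalled in~\cite{biset}.

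It remains to check that $\Phi=(\Phi_K)_{K\in\CD}$ is a morphism of Green $\CD$-biset functors: compatible with the biset action, with the products, and with the identity elements. This is best organised through the conceptual description. The biset-functor structures of $ZA_L$ and of $(ZA)_L$ are given by post-composition with the natural transformations $\rho_{(-)}$ attached to images of bisets under $\upsilon$ (Propositions~\ref{natural xi} and~\ref{zisgbf}), while the Green products are vertical compositions of natural transformations (Lemma~\ref{product Cr}); since $\Phi$ is assembled purely from whiskering and canonical adjunction isomorphisms, each of these compatibilities reduces, through the interchange law for natural transformations, to Lemma~\ref{psipsi} (and its right-adjoint form) together with the compatibility of $\psi_L$ and $\theta_L$ with the shift endofunctors $\rho_K$ — which in turn goes back to the fact that $\upsilon$ commutes with inflation. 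I expect the main obstacle to lie exactly here: although every ingredient is morally transparent, making the isomorphism $\theta_L\rho_K^{A_L}\psi_L\cong\rho_{KL}^A$ explicit and natural in $K$ and then feeding it through the product formula of Lemma~\ref{product Cr} requires another round of the lengthy but routine biset bookkeeping used repeatedly above. Finally, the fact that $\Phi_K$ is in general not surjective — the reason the statement only claims an injection — is visible from the construction: a family $(s_G)\in Cr_A(KL)$ is constrained only against the morphisms $\beta\in A(HG)$ of $\cp_A$, whereas a family in the image of $\Phi_K$ is constrained against all the morphisms $A_L(HG)=A(HGL)$ of $\cp_{A_L}$.
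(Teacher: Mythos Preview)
Your proposal is correct and follows essentially the same route as the paper: whisker $t$ with $\psi_L$, pass through the adjunction of Theorem~\ref{adjonctions-enonce}, and identify $\theta_L\rho_K^{A_L}\psi_L$ with $\rho_{KL}$ via Lemma~\ref{psipsi} and uniqueness of adjoints, leaving the Green-functor compatibilities to routine biset computations. Your explicit componentwise formula $\Phi_K(t)_G=A(\Iso_{GKGL}^{GKLG})(t_G)$ and your remarks on injectivity and non-surjectivity make explicit points the paper leaves implicit, but the underlying argument is the same.
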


\begin{proof}
Let $K,L\in\CD$, and $t\in ZA_L(K)$, i.e. a natural transformation
$$t:\Id_{\cp_{A_L}}\to \rho_K^{A_L}$$
from the identity functor of $\cp_{A_L}$ to the functor $\rho_K^{A_L}=\theta_K^{A_L}\psi_K^{A_L}$, where $\theta_K^{A_L}$ is the functor $\cp_{(A_L)_K}\to \cp_{A_{L}}$ of Theorem~\ref{adjonctions-enonce} built from $A_L$ and $K$.  By precomposition of this natural transformation with the functor $\psi_L^A$, we get a natural transformation
$$\psi_L^A\to \theta_K^{A_L}\psi_K^{A_L}\psi_L^A,$$
which by adjunction, gives a natural transformation 
$$\Id_{\cp_A}\to \theta_L^A\theta_K^{A_L}\psi_K^{A_L}\psi_L^A.$$
By Lemma~\ref{psipsi}, the functor $\psi_K^{A_L}\psi_L^A$ is isomorphic to $\psi_{KL}^A$. By Theorem~\ref{adjonctions-enonce}, the functor $\theta_K^{A_L}$ is left and right adjoint to the functor $\psi_K^{A_L}$, and $\theta_L^A$ is left and right adjoint to $\psi_L^A$. It follows that the functor $\theta_L^A\theta_K^{A_L}$ is isomorphic to the adjoint $\theta_{KL}^A$ of $\psi_{KL}^A$. Hence we have a natural transformation
$$T:\Id_{\cp_A}\to \theta_{KL}^A\psi_{KL}^A=\rho_{KL}^A,$$
that is an element of $ZA(KL)=(ZA)_L(K)$.\par
So we have a map $j_{L,K}:t\in ZA_L(K)\mapsto T\in(ZA)_L(K)$, which is obviously $R$-linear. Lengthy but straightforward calculations show that the family of these maps, for $K\in\CD$, form a morphism of Green biset functors from $ZA_L$ to $(ZA)_L$. 
\end{proof}

\section{Application: some equivalences of categories}\label{equiv}

\subsection{General setting}\label{genset}

We  begin by recalling some well known folklore facts on the decomposition of a category $\mathcal{F}_\cp$ of functors from a small $R$-linear category $\cp$ to $\AMod{R}$, using an orthogonal decomposition of the identity in the center $Z\cp$ of $\cp$. \par
Since $\cp$ is $R$-linear, its center $Z\cp$ is a commutative $R$-algebra. Suppose we have a family $(\gamma_i)_{i\in I}$ of elements of $Z\cp$ indexed by a set $I$, with the following properties:
\begin{enumerate}
\item For $i,j\in I$, the product $\gamma_i\gamma_j$ is equal to 0 if $i\neq j$, and to $\gamma_i$ if $i=j$.
\item For any object $G$ of $\cp$, there is only a finite number of elements $i\in I$ such that $\gamma_{i,G}\neq 0$. Then, for each object $G\in \cp$, we can consider the (finite) sum $\sum_{i\in I}\limits \gamma_{i,G}$, which is a well defined endomorphism of $G$. We assume that this endomorphism is the identity of $G$, for any $G\in\cp$.
\end{enumerate}
If $F$ is an $R$-linear functor from $\cp$ to $\AMod{R}$, and $i\in I$, we denote by  $F\gamma_i$ the functor that in an object $G$ of $\cp$ is defined as the image of $F(\gamma_{i,G})$, that is 
$$(F\gamma_i)(G)=\Im \big(F(\gamma_{i,G}):F(G)\to F(G)\big),$$
 which is an $R$-submodule of $F(G)$. For a morphism $\alpha:G\to H$, we denote by $(F\gamma_i)(\alpha)$ the restriction of $F(\alpha)$ to $(F\gamma_i)(G)$. The image of $(F\gamma_i)(\alpha)$ is contained in $F\gamma_i(H)$, because the square
$$\xymatrix{
G\ar[r]^-{\gamma_{i,G}}\ar[d]_-\alpha&G\ar[d]^-\alpha\\
H\ar[r]_-{\gamma_i,H}&H
}
$$
is commutative in $\cp$, hence also its image by $F$.\par
It is easy to check that $F\gamma_i$ is an $R$-linear functor from $\cp$ to $\AMod{R}$, which is a subfunctor of $F$. Moreover, the assigment $F\mapsto F\gamma_i$ is an endofunctor $\Gamma_i$ of the category $\mathcal{F}_\cp$. The image of this functor consists of those functors $F\in\mathcal{F}_\cp$ such that the subfunctor $F\gamma_i$ is equal to $F$. Let $\mathcal{F}_\cp\gamma_i$ be the full subcategory of $\mathcal{F}_\cp$ consisting of such functors. It is an abelian subcategory of $\mathcal{F}_\cp$.\par
For each $G\in\cp$, the direct sum $\mathop{\oplus}_{i\in I}\limits F\gamma_i(G)$ is actually finite, and our assumptions ensure that is is equal fo $F(G)$. This shows that the functor sending $F\in\mathcal{F}_\cp$ to the family of functors $F\gamma_i$ is an equivalence between $\mathcal{F}_\cp$ and the product of the categories~$\mathcal{F}_\cp\gamma_i$. 

A particular case of the previous situation is when
the identity element $\varepsilon\in A(1)$ of a Green biset functor $A$ has a decomposition in  orthogonal idempotents $\varepsilon=\sum_{i=1}^ne_i$ in the ring $CA(1)$.  Each $e_i$ induces a natural transformation $E^i:Id\rightarrow Id_1$, defined {at} an $A$-module $M$ {and a group $H\in\CD$} as
\begin{displaymath}
E^i_{M,\, H}:M(H)\rightarrow M_1(H)\quad m\mapsto M(\Iso^{H\times 1}_{1\times H})(e_i\times m).
\end{displaymath}
For simplicity, we will think of this natural transformation as sending $m$ simply to $e_i\times m$, and we will denote by $e_iM$ the $A$-submodule of $M$ given by the image of $E^i_M$.

Since the morphism from $CA(1)$ to $ZA(1)$ is a ring homomorphism, we have that the natural transformations $E^i$ satisfy $E^i\circ E^i=E^i$, $E^i\circ E^j=0$ if $i\neq j$ and that the identity natural transformation, $\mathbf{1}$, is equal to $\sum_{i=1}^nE^i$.  By Proposition \ref{ZAZA'}, we have then the hypothesis assumed at the beginning of the section and so we obtain the equivalence of categories mentioned above. In this case we can give a more precise description of this equivalence.

\begin{lema}
The $A$-module $e_iA$ is a Green {$\CD$-}biset functor, and for every $A$-module~$M$, the functor $e_iM$ is an $e_iA$-module. Furthermore $A\cong \bigoplus_{i=1}^ne_iA$ as Green {$\CD$-}biset functors.
\end{lema}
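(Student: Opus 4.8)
The plan is to exploit the fact that $e_i$ lies in $CA(1)$, which by the last lemma of Section~\ref{commutant} means $e_i$ commutes with every element of $A$, and to transfer the ring structure along the isomorphism $A\cong\bigoplus_{j=1}^n e_jA$. First I would fix the component description: for a group $H\in\CD$, the $R$-module $(e_iA)(H)$ is the image of the idempotent endomorphism $a\mapsto e_i\times a$ of $A(H)$, which under the ring structure of Lemma~\ref{defeq} on $A(H)$ is exactly multiplication by the central idempotent $f_{i,H}:=A(\Inf_1^{\,H})(e_i)\cdot$ in $A(H)$ (here one checks $e_i\times a = A(\Iso_{\Delta(H)}^H\Res^{H\times H}_{\Delta(H)})(e_i\times a)$ agrees with $f_{i,H}\cdot a$ since $e_i\times a$ already equals $a\times e_i$ by commutativity of $e_i$). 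Thus $(e_iA)(H)=f_{i,H}A(H)$ is a two-sided ideal of $A(H)$, hence an $R$-algebra with unit $f_{i,H}$, and the $f_{i,H}$ are orthogonal idempotents summing to the unit of $A(H)$ because $\varepsilon=\sum_j e_j$ in $CA(1)$ and inflation is a ring homomorphism.

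Next I would equip $e_iA$ with a Green biset functor structure via Definition~\ref{defvieja}: the algebra structure on $(e_iA)(H)$ is the one just described, and for a group homomorphism $K\to G$ one must check that $A(G_r)$ and $A(G_l)$ restrict to maps $f_{i,G}A(G)\to\cdots$ compatibly. For $A(G_r):A(G)\to A(K)$, functoriality of the Green functor gives $A(G_r)(f_{i,G})=A(G_r)A(\Inf_1^{\,G})(e_i)=A(\Inf_1^{\,K})(e_i)=f_{i,K}$ (since $G_r\circ\Inf_1^{\,G}=\Inf_1^{\,K}$ as bisets, both being the $(K,1)$-biset $K\bigr/\!\sim$), so $A(G_r)$ sends $f_{i,G}A(G)$ into $f_{i,K}A(K)$ and is a ring homomorphism there. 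For $A(G_l):A(K)\to A(G)$, the Frobenius identities for $A$ give $A(G_l)(f_{i,K}a)=A(G_l)(a\cdot A(G_r)(f_{i,G}))\cdot$-type manipulations showing $A(G_l)$ maps $f_{i,K}A(K)$ into $f_{i,G}A(G)$ and satisfies the Frobenius identities relative to these unital ideals. Hence $e_iA$ is a Green $\CD$-biset functor, and the inclusion $e_iA\hookrightarrow A$ together with the projection $A\twoheadrightarrow e_iA$ ($a\mapsto e_i\times a$) realise $e_iA$ as a direct summand; summing over $i$ and using $\varepsilon=\sum e_i$ gives $A\cong\bigoplus_{i=1}^n e_iA$ as biset functors, and the decomposition respects the products since $e_i\times e_j$-cross terms vanish, so it is an isomorphism of Green $\CD$-biset functors.

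For the middle assertion, given an $A$-module $M$, the submodule $e_iM$ has $(e_iM)(H)=f_{i,H}M(H)$ by the same computation; the $A$-action $A(G)\times M(H)\to M(G\times H)$ restricts, when we precompose with $e_iA(G)\hookrightarrow A(G)$ and postcompose with the projection onto $e_iM(G\times H)$, to a bilinear product $(e_iA)(G)\times(e_iM)(H)\to (e_iM)(G\times H)$ because $f_{i,G\times H}(a\times m)=f_{i,G\times H}(f_{i,G}a\times m)$ — this uses that $e_i\times(a\times m)=(e_i\times a)\times m$ up to the canonical isomorphism, together with idempotency of $e_i$. The associativity, unit and functoriality axioms of Definition~\ref{defi} (the $A$-module axioms) are inherited verbatim. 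The only genuinely delicate point — and the main obstacle — is verifying that all the biset operations $A(\varphi)$ genuinely preserve the $f_{i,\cdot}$-components coherently; this is where one repeatedly invokes that $e_i$ is \emph{central} in the sense of $CA$, i.e. that $e_i\times x = A(\Iso)(x\times e_i)$ for all $x$, so that inserting $e_i$ can be moved past any cross product and hence past any biset operation built from inductions, restrictions, inflations and deflations. Granting that bookkeeping, all three claims follow.
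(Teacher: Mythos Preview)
Your argument is correct, but it is organised differently from the paper's. The paper works entirely with Definition~\ref{defgreen}: it defines the product on $e_iA$ by the formula $(e_i\times a)\times(e_i\times b)=e_i\times a\times b$, justifying it in one line from $e_i\in CA(1)$ and $e_i\times e_i=e_i$; functoriality then comes for free from the already-known fact that $E^i_A$ is a morphism of $A$-modules, so $A(X)(e_i\times a)\cong e_i\times A(X)(a)$ for every biset $X$. By contrast, you pass through Definition~\ref{defvieja}: you identify $(e_iA)(H)$ with the ideal $f_{i,H}A(H)$ for the central idempotent $f_{i,H}=A(\Inf_1^{\,H})(e_i)$, and then check separately that $A(G_r)$ is a ring map on these ideals and that $A(G_l)$ satisfies Frobenius. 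Your route makes the ring-theoretic picture (each evaluation is a unital ideal cut out by a central idempotent) more explicit, at the cost of having to track $G_r$ and $G_l$ individually; the paper's route is shorter because it reuses the $A$-module structure of $e_iA$ to absorb all biset operations at once. Both rely on the same key input, namely that $e_i$ lies in $CA(1)$ and hence can be moved across any cross product.

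Two small cosmetic points: your parenthetical ``$e_i\times a = A(\Iso_{\Delta(H)}^H\Res^{H\times H}_{\Delta(H)})(e_i\times a)$'' does not typecheck as written (the argument lives in $A(1\times H)$, not $A(H\times H)$); the correct identification is simply via $\Iso_{1\times H}^H$ followed by the formula of Lemma~\ref{defeq}. And the description of $\Inf_1^{\,K}$ as ``the $(K,1)$-biset $K/\!\sim$'' would be clearer as ``the one-point $(K,1)$-biset''. Neither affects the validity of the argument.
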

\begin{proof}
As we have said, $e_iA$ is an $A$-module, in particular it is a biset functor. 
We claim that it is a Green biset functor with the product
\begin{displaymath}
e_iA(G)\times e_iA(K)\rightarrow e_iA(G\times K)\quad (e_i\times a)\times (e_i\times b)=e_i\times a\times b.
\end{displaymath}
Observe that since all the $\times$ represent the product of $A$, then $(e_i\times a)\times (e_i\times b)$ is isomorphic to $a\times e_i\times e_i\times b$, because $e_i\in CA(1)$. But the product $\times$ coincides with the ring product in $A(1)$, hence this element is isomorphic to $a\times e_i\times b$ and then to $e_i\times a\times b$. 
This implies immediately that the product is associative, the identity element in $e_iA(1)$ is of course $e_i\times \varepsilon$. Next, notice that since $E^i_A$ is a morphism of {$A$-modules}, if {$L,G\in\CD$} and $X$ is an $(L,\, G)$-biset, then $A(X)(e_i\times a)\cong e_i\times A(X)(a)$ for all $a\in A(G)$. With this, one can easily show the functoriality of the product.

Similar arguments show  that $e_iM$ is an $e_iA$-module with the product
\begin{displaymath}
e_iA(G)\times e_iM(K)\rightarrow e_iM(G\times K)\quad (e_i\times a)\times (e_i\times m)=e_i\times a\times m.
\end{displaymath}

For the final statement, first it is an easy exercise to verify that given Green biset functors $A_1,\ldots ,\, A_r$, then their direct sum $\bigoplus_{i=1}^rA_i$ in the category of biset functors is again a Green biset functor, with the product given component-wise. With this, it is straightforward to see that the isomorphism of biset functors $A\cong\bigoplus_{i=1}^ne_iA$ is an isomorphism of Green biset functors.
\end{proof}
 All these observations give us the following result.

\begin{teo}\label{decomposition}
Let $A$ be a Green {$\CD$-}biset functor as above. Then the category $\AMod{A}$ is equivalent to the product category
\begin{displaymath}
\prod_{i=1}^n\AMod{e_iA}.
\end{displaymath}
Moreover, for each indecomposable $A$-module $M$, there exists only one $e_i$ such that $e_iM\neq 0$, and hence $e_iM\cong M$.
\end{teo}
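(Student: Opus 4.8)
The plan is to apply the general folklore decomposition recalled at the beginning of Section~\ref{equiv} to the category $\cp=\cp_A$, using the family of idempotents $(\gamma_i)_{i=1}^n$ obtained by pushing the $e_i\in CA(1)$ into $ZA(1)$ via $\iota_A$, i.e. $\gamma_i=E^i$ viewed through the identification $ZA(1)\cong ZA'(1)$ of Proposition~\ref{ZAZA'}. First I would verify that the family $(E^i)_{i=1}^n$ really satisfies hypotheses (1) and (2) of the general setting: orthogonality $E^i\circ E^j=\delta_{ij}E^i$ and $\sum_i E^i=\mathbf{1}$ hold because $\iota_{A,1}:CA(1)\to ZA(1)$ is a ring homomorphism (Proposition~\ref{CA ZA A}) and $\varepsilon=\sum_i e_i$ is an orthogonal idempotent decomposition in $CA(1)$; hypothesis (2) is automatic here since the indexing set $I=\{1,\dots,n\}$ is finite, so the ``finitely many nonzero components'' condition is trivially met and the sum $\sum_i E^i_{M,H}=\mathrm{Id}_{M(H)}$ is the one just noted. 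Then the general result gives an equivalence $\AMod{A}\simeq \prod_{i=1}^n \AMod{A}E^i$, where $\AMod{A}E^i$ is the full subcategory of $A$-modules $M$ with $E^iM=M$.

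Next I would identify $\AMod{A}E^i$ with $\AMod{e_iA}$. By the previous lemma, $e_iA$ is a Green $\CD$-biset functor and every $e_iM$ is an $e_iA$-module, so there is an evident functor $\AMod{A}E^i\to\AMod{e_iA}$ sending $M$ (with $E^iM=M$) to itself, and $e_iM\cong M$ for such $M$ since $E^i$ acts as the identity. Conversely, any $e_iA$-module becomes an $A$-module via the projection $A\to e_iA$ (using $A\cong\bigoplus_j e_jA$), on which the other idempotents $e_j$, $j\neq i$, act as zero and $e_i$ acts as the identity; hence it lies in $\AMod{A}E^i$. These two assignments are mutually inverse equivalences, so $\AMod{A}E^i\simeq\AMod{e_iA}$, and composing with the product equivalence above yields $\AMod{A}\simeq\prod_{i=1}^n\AMod{e_iA}$, which is the first assertion.

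For the second assertion, let $M$ be an indecomposable $A$-module. The orthogonal decomposition $\mathbf{1}=\sum_{i=1}^n E^i$ in the center of $\AMod{A}$ restricts to a decomposition $\mathrm{Id}_M=\sum_i E^i_M$ of the identity of $M$ into orthogonal idempotent endomorphisms of $M$; correspondingly $M=\bigoplus_{i=1}^n e_iM$ as $A$-modules. Since $M$ is indecomposable and each $e_iM$ is a direct summand, all but one of the $e_iM$ must be zero, and the remaining one equals $M$; so there is exactly one index $i$ with $e_iM\neq 0$, and then $e_iM\cong M$.

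The bulk of the argument is bookkeeping; the only point requiring a little care is the equivalence $\AMod{A}E^i\simeq\AMod{e_iA}$, where one must check that restricting scalars along $A\twoheadrightarrow e_iA$ and taking the image of $E^i$ are mutually inverse, i.e.\ that the $A$-module structure on an object killed by the $e_j$ ($j\neq i$) is recovered from its $e_iA$-module structure. This is the step I expect to be the main (though still routine) obstacle, and it follows directly from the block decomposition $A\cong\bigoplus_{j=1}^n e_jA$ of Green biset functors established in the preceding lemma.
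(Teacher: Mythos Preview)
Your proposal is correct and follows essentially the same route as the paper: the paper's ``proof'' is just the phrase ``All these observations give us the following result,'' where the observations are exactly the ones you assemble---the ring homomorphism $CA(1)\to ZA(1)$ giving orthogonality and $\sum_i E^i=\mathbf{1}$, the general folklore decomposition from the start of Section~\ref{equiv}, and the preceding lemma on $e_iA$ and $e_iM$. Your treatment is in fact more explicit than the paper's, particularly in spelling out the equivalence $\AMod{A}E^i\simeq\AMod{e_iA}$ via restriction along the projection $A\twoheadrightarrow e_iA$, which the paper leaves implicit.
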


When considering the shifted functor $A_H$, if we have an idempotent $e\in CA_H(1)$ as before, then the evaluation of $eA_H$ at a group $G$ can be seen as follows. Since $eA_H(G)=e\times^d A_H(G)$, then
for $a\in A_H(G)$ it is easy to see that
\begin{displaymath}
e\times^d a=A(\Res^{1\times H\times G\times H}_{G\times \Delta(H)})(e\times a)=A(\Inf_H^{G\times H})(e)\cdot a, 
\end{displaymath}
where the product $\cdot$ indicates the ring structure in $A(G\times H)$. The last equality follows from Lemma \ref{defeq} and the properties of restriction and inflation. So, the evaluation of $eA_H$ at a given group depends on how inflation of $A$ acts on the idempotents of $CA(H)$.

\subsection{Some examples}\label{some examples}

\subsubsection{$p$-biset functors}
Let $p$ be a prime, and $RB_p$ denote the restriction to finite $p$-groups of the Burnside functor $RB$ of Example 5. When $p$ is invertible in the ring $R$, a family of orthogonal idempotents in the center of the Green biset functor $RB_p$ of Example~\ref{ejemplos} has been introduced in~\cite{atoric}. These idempotents $\widehat{b}_L$ are indexed by {\em atoric} $p$-groups $L$ up to isomorphism, i.e. finite $p$-groups which cannot be decomposed as a direct product $Q\times C_p$ of a finite $p$-group $Q$ and a group $C_p$ of order $p$. \par
More precisely, for each such atoric $p$-group $L$ and each finite $p$-group $P$, a specific idempotent $b_L^P$ of $RB_p(P,P)$ is introduced (cf. \cite{atoric}, Theorem 7.4), with the property that 
$$a\circ b_L^P=b_L^Q\circ a$$
for any finite $p$-groups $P$ and $Q$, and any $a\in RB(Q,P)$. In other words, the family $b_L=(b_L^P)_P$ is an element of the center of the biset category $R\mathcal{C}_p$ of finite $p$-groups. The elements $\widehat{b}_L$ of the center of the category of $p$-biset functors over $R$ - i.e. the category $\AMod{RB_p}$ - are deduced from the elements $b_L$ in~\cite{atoric}, Corollary~7.5.\par
Let $[\mathcal{A}t_p]$ denote a set of representatives of isomorphism classes of atoric $p$-groups. The idempotents $b_L^P$ have the following additional properties: 
\begin{enumerate}
\item  If $L$ and $L'$ are isomorphic atoric $p$-groups, then $b_L^P=b_{L'}^P$
\item  If $L$ and $L'$ are non isomorphic atoric $p$-groups, then $b_L^Pb_{L'}^P=0$.
\item  For a given finite $p$-group $P$, there are only a finite number of atoric $p$-groups $L$, up to isomorphism, such that $b_L^P\neq 0$. 
\item  The sum $\sum_{L\in[\mathcal{A}t_p]}\limits b_L^P$, which is a finite sum by the previous property, is equal to the identity element of $RB(P,P)$.
\end{enumerate}
It follows that one can consider the sum $\sum_{L\in[\mathcal{A}t_p]}\limits \widehat{b}_L$ in $Z(RB_p)(1)$, and that this sum is equal to the identity element of $Z(RB_p)(1)$. So we obtain a {\em locally finite} decomposition of the identity element of $Z(RB_p)(1)$ as a sum of orthogonal idempotents, which allows for a splitting of the category of $p$-biset functors over $R$ as a direct product of abelian subcategories (cf.~\cite{atoric}, Corollary 7.5). As a consequence, for each indecomposable $p$-biset functor $F$ over $R$, there is an atoric $p$-group $L$, unique up to isomorphism, such that $\widehat{b}_L$ acts as the identity of $F$ (or equivalently, does not act by zero on $F$). This group $L$ is called the {\em vertex} of $F$ (cf.~\cite{atoric}, Definition 9.2).
\begin{rem} This example shows in particular that $ZA$ can be much bigger than $CA$: indeed for $A=RB_p$, when $R$ is a field of characteristic different from $p$, we see that $ZA(1)$ is an infinite dimensional $R$-vector space, whereas $CA(1)\cong R$ is one dimensional.
\end{rem}
\subsubsection{Shifted representation functors}

Now we apply the results of Section \ref{genset} to some shifted classical representation functors, with coefficients in a field $\mathbb{F}$ of characteristic 0. In each case we will begin with a commutative Green biset functor $C$ such that for each group $H$, the $\mathbb{F}$-algebra $C(H)$ is split semisimple. In particular, taking $A=C_H$, in $A(1)=C(H)$ we will have a family of orthogonal idempotents $\{e_i^H\}_{i=1}^{n_H}$ such that $\varepsilon=\sum_{i=1}^{n_H}\limits e_i^H$. As we said in Section \ref{genset}, the evaluation $e_i^HA(G)$ is given in the following way
\begin{displaymath}
e_i^H\times^da=A(\Inf_1^{G})(e_i^H)\cdot a=C(\Inf_{H}^{G\times H})(e_i^H)\cdot a
\end{displaymath}
for $a\in A(G)$. Now, since inflation is a ring homomorphism, $A(\Inf_1^{G})(e_i^H)$ is equal to $\sum_{j\in J}\limits e_j^{G\times H}$ for some $J\subseteq \{1,\ldots, n_{G\times H}\}$ depending on $e_i^H$ and $G$. On the other hand, we also have $a=\sum_{i=1}^{n_{G\times H}}\limits \alpha_i(a)e_i^{G\times H}$, for some $\alpha_i(a)\in \mathbb{F}$. This implies that the idempotents appearing in the evaluation $e_i^{H}A(G)$ depend only on the set $\{e_j^{G\times H}\}_{j\in J}$. 

 \subsubsection*{Shifted Burnside functors.}

We consider the Burnside functor $\mathbb{F}B$ over $\mathbb{F}$. We fix a finite group $H$, and consider the shifted functor $A=\mathbb{F}B_H$. Then the algebra $A(1)$ is isomorphic to $\mathbb{F}B(H)$, hence it is split semisimple. Its primitive idempotents $e_K^H$ are indexed by subgroups $K$ of $H$, up to conjugation, and explicitly given (see. \cite{gluck}, \cite{yoshidaidemp}) by
$$e_K^H=\frac{1}{|N_H(K)|}\sum_{L\leq K}|K|\mu(L,K)\,[H/L],$$
where $\mu$ is the M\"obius function of the poset of subgroups of $H$ and $[H/L]\in B(H)$ is the class of the transitive $H$-set $H/L$. \par
By Theorem~\ref{decomposition}, we get a decomposition of the category $\AMod{A}$ as a product $\prod_{K\in[s_H]}\limits \AMod{e_K^H A}$, where $[s_H]$ is a set of representatives of conjugacy classes of subgroups of $H$. From the action of inflation on the primitive idempotents of Burnside rings (see~\cite{biset} Theorem 5.2.4), it is easy to see that for $K\leq H$,  the value $e_K^H A(G)$ of the Green functor $e_K^H A$ at a finite group $G$ is equal to the set of linear combinations of idempotents $e_L^{G\times H}$ of $\mathbb{F}B(G\times H)$ indexed by subgroups $L$ of $(G\times H)$ for which the second projection $p_2(L)$ is conjugate to $K$ in $H$. Also, for each indecomposable $A$-module $M$, there exists a unique $K\leq H$, up to conjugation, such that $e_K^HM\neq 0$, and then $e_K^HM=M$.

 \subsubsection*{Shifted functors of linear representations.}

Next we consider the functor $\mathbb{F}R_\mathbb{K}$ of linear representations over $\mathbb{K}$, a field of characteristic 0. As before, we fix a finite group $H$ and consider the shifted functor $A=(\mathbb{F}R_\mathbb{K})_H$. This is a commutative Green biset functor, and $A(1)$ is isomorphic to the split semisimple $\mathbb{F}$-algebra $\mathbb{F}R_\mathbb{K}(H)$. If $|H|=n$, it is shown in Section 3.3.1 of \cite{thbenja} (and in a slightly different way in \cite{artbenja}) that $\mathbb{F}R_{\mathbb{K}}(H)$ has a complete family of orthogonal primitive idempotents $e_D^H$ indexed by the $E$-conjugacy classes of $H$, where $E$ is certain subgroup of $(\mathbb{Z}/n\mathbb{Z})^\times$. By $E$-conjugacy we mean that two elements $x,\, y\in H$ are $E$-conjugated if there exist $[i]\in E$ such that $x=_Hy^i$. This defines an equivalence relation on $H$ and the set of $E$-conjugacy classes is denoted by $Cl_E(H)$. The group $E$ is built in the following way: First we fix an algebraically closed field $\mathbb{L}$, which is an extension of $\mathbb{F}$ and $\mathbb{K}$, and then we take the intersection $\mathbb{E}=\mathbb{F}\cap\mathbb{K}$ in $\mathbb{L}$. By adding an $n$-th primitive root of unity, $\omega$, to $\mathbb{E}$, we obtain $E$ as the group isomorphic to $Gal(\mathbb{E}[\omega]/\mathbb{E})$ in $(\mathbb{Z}/n\mathbb{Z})^\times$.  Observe that, as a group, $E$ depends only on $\mathbb{F}$, $\mathbb{K}$ and $n$, and not on the choice of $\mathbb{L}$.  Then, by Theorem~\ref{decomposition}, we get a decomposition of the category $\AMod{A}$ as a product $\prod_{\substack{D\in Cl_E(H)}}\limits \AMod{e^H_D A}$.  Also, for each indecomposable $A$-module $M$, there exists a unique $E$-conjugacy class $D$ of $H$ such that $e_D^HM\neq 0$ and so $e_D^HM=M$. On the other hand, in Corollary 3.3.14 of \cite{thbenja} it is shown that $e^H_D A$ is a simple $A$-module and  hence that $A$ is a semisimple $A$-module, since $A=\sum_{D}\limits e^H_D A$. 

Finally, using  Lemma 3.3.10 in \cite{thbenja}, we see that the idempotents $e_C^{G\times H}$, for $C$ an $E$-class of $G\times H$, appearing in the evaluation $A(\binf_1^G)(e_D^H)$ are those for which $\pi_H(C)$, the projection of $C$ on $H$, is equal to $D$.

 \subsubsection*{Shifted $p$-permutation functors.}

Let $k$ be an algebraically closed field of positive characteristic $p$. In this case we assume also that $\mathbb{F}$ contains all the $p'$-roots of unity, and consider the functor $\mathbb{F}pp_k$. Then $\mathbb{F}pp_k$ is a commutative Green biset functor, and the category $\AMod{\mathbb{F}pp_k}$ has been considered in particular in~\cite{ducellier} (when $\mathbb{F}$ is algebraically closed).

We fix a finite group $H$, and consider the shifted functor $A=(\mathbb{F}pp_k)_H$. Then the algebra $A(1)$ is isomorphic to the algebra $\mathbb{F}pp_k(H)$. This algebra is split semisimple, and its primitive idempotents $F_{Q,s}^H$ have been determined in~\cite{both5}: they are indexed by (conjugacy classes of) pairs $(Q,s)$ consisting of a $p$-subgroup $Q$ of~$H$, and a $p'$-element $s$ of $N_H(Q)/Q$. We denote by $\mathcal{Q}_{H,p}$ the set of such pairs, and by $[\mathcal{Q}_{H,p}]$ a set of representatives of orbits of $H$ for its action on $\mathcal{Q}_{H,p}$ by conjugation. \par
If $(Q,s)\in \mathcal{Q}_{H,p}$ and $u\in \mathbb{F}pp_k(H)$, then $F_{Q,s}^Hu=\tau_{Q,s}^H(u)F_{Q,s}^H$, where $\tau_{Q,s}^H(u)\in\mathbb{F}$. The maps $u\mapsto \tau_{Q,s}^H(u)$, for $(Q,s)\in[\mathcal{Q}_{H,p}]$ are the distinct algebra homomorphisms (the species) from $\mathbb{F}pp_k(H)$ to $\mathbb{F}$ (see e.g.~\cite{both5} Proposition 2.18). Moreover, the map $\tau_{Q,s}^H$ is determined by the fact that for any $p$-permutation $kH$-module $M$, the scalar $\tau_{Q,s}^H(M)$ is equal to the value at $s$ of the Brauer character of the Brauer quotient $M[Q]$ of $M$ at $Q$. \par
It follows that if $N\trianglelefteq H$, and $v\in\mathbb{F}pp_k(H/N)$, then $\tau_{Q,s}^H(\Inf_{H/N}^Hv)=\tau_{\sur{Q},\sur{s}}^{H/N}(v)$, where $\sur{Q}=QN/N$, and $\sur{s}\in N_{H/N}(\sur{Q})/\sur{Q}$ is the projection of $s$ to $H/N$. As a consequence, if $(R,t)\in\mathcal{Q}_{H/N,p}$, then $\Inf_{H/N}^H(F_{R,t}^{H/N})$ is equal to the sum of the idempotents $F_{Q,s}^H$ for those elements $(Q,s)\in[\mathcal{Q}_{H,p}]$ for which $(\sur{Q},\sur{s})$ is conjugate to $(R,t)$ in $H/N$.\par
Now by Theorem~\ref{decomposition}, we get a decomposition of the category $\AMod{A}$ as a product $\prod_{(Q,s)\in [\mathcal{Q}_{H,p}]}\limits \AMod{F_{Q,s}^HA}$. Let $G$ be a finite group. It follows from the previous discussion on inflation that the evaluation $F_{Q,s}^HA(G)$ of $A$ at $G$ is equal to the set of linear combinations of primitive idempotents $F_{L,t}^{G\times H}$, for $(L,t)\in\mathcal{Q}_{G\times H,p}$, such that the pair $\big(p_2(L),p_2(t)\big)$ is conjugate to $(Q,s)$ in $H$, where $p_2:G\times H\to H$ is the second projection. Also, for each indecomposable $A$-module~$M$, there exists a unique $(Q,s)\in[\mathcal{Q}_{H,p}]$ such that $F_{Q,s}^HM\neq 0$, and then $F_{Q,s}^HM=M$.

\begin{thebibliography}{10}

\bibitem{barker-fibred}
L.~Barker.
\newblock Fibred permutation sets and the idempotents and units of monomial
  {B}urnside rings.
\newblock {\em J. of Algebra}, 281(2):535--566, 2004.

\bibitem{boltje-coskun}
R.~Boltje and O.~Co\c{s}kun.
\newblock Fibered biset functors.
\newblock {\em Adv. Math.}, 339:540--598, 2018.

\bibitem{bGfun}
S.~Bouc.
\newblock {\em Green functors and {$G$}-sets}, volume 1671.
\newblock Springer, Berlin, 1997.

\bibitem{biset}
S.~Bouc.
\newblock {\em Biset functors for finite groups}, volume 1671.
\newblock Springer, Berlin, 2010.

\bibitem{slisec}
S.~Bouc.
\newblock The slice {B}urnside ring and the section {B}urnside ring of a finite
  group.
\newblock {\em Compositio Mathematica}, 148:868--906, 2012.

\bibitem{atoric}
S.~Bouc.
\newblock Idempotents of double {B}urnside algebras, {$L$}-enriched bisets, and
  decomposition of $p$-biset functors.
\newblock {\em J. of Algebra}, 504:129--175, 2018.

\bibitem{both5}
S.~Bouc and J.~Th\'evenaz.
\newblock The primitive idempotents of the $p$-permutation ring.
\newblock {\em J. of Algebra}, 323:2905--2915, 2010.

\bibitem{butler-horrocks}
M.~C.~R. Butler and G.~Horrocks.
\newblock Classes of extensions and resolutions.
\newblock {\em Philos. Trans. Roy. Soc. London Ser. A}, 254:155--222,
  1961/1962.

\bibitem{day}
B.~Day.
\newblock On closed categories of functors.
\newblock In {\em Reports of the {M}idwest {C}ategory {S}eminar, {IV}}, Lecture
  Notes in Mathematics, Vol. 137, pages 1--38. Springer, Berlin, 1970.

\bibitem{ducellier}
M.~Ducellier.
\newblock {\em Foncteurs de $p$-permutation}.
\newblock PhD thesis, Universit\'e de {P}icardie {J}ules {V}erne, 2015.
\newblock https://hal.archives-ouvertes.fr/tel-02146174.

\bibitem{thbenja}
B.~Garc\'ia.
\newblock {\em Construcci\'on de Yoneda-Dress de funtores de representaciones
  lineales}.
\newblock PhD thesis, Universidad Nacional Aut\'onoma de M\'exico, 2018.

\bibitem{artbenja}
B.~Garc\'ia.
\newblock On the ideals and essential algebras of shifted functors of linear
  representations.
\newblock {\em J. of Algebra}, 521:452--480, 2019.

\bibitem{gluck}
D.~Gluck.
\newblock Idempotent formula for the {B}urnside ring with applications to the
  $p$-subgroup simplicial complex.
\newblock {\em Illinois J. Math.}, 25:63--67, 1981.

\bibitem{hoff}
R.-E. Hoffmann.
\newblock \"{U}ber das {Z}entrum einer {K}ategorie.
\newblock {\em Math. Nachr.}, 68:299--306, 1975.

\bibitem{tesis}
N.~Romero.
\newblock {\em Funtores de {M}ackey}.
\newblock Tesis de Doctorado UNAM, Mexico, 2011.

\bibitem{mine}
N.~Romero.
\newblock Simple modules over {G}reen biset functors.
\newblock {\em J. of Algebra}, 367:203--221, 2012.

\bibitem{mine2}
N.~Romero.
\newblock On fibred biset functors with fibres of order prime and four.
\newblock {\em J. of Algebra}, 387:185--194, 2013.

\bibitem{tounkara}
I.~Tounkara.
\newblock The ideals of the slice {B}urnside $p$-biset functor.
\newblock {\em J. of Algebra}, 495:81--113, 2018.

\bibitem{tounkara-cras}
I.~Tounkara.
\newblock On the {$T^\circ$}-slices of a finite group.
\newblock {\em C. R. Math. Acad. Sci. Paris}, 356(4):360--364, 2018.

\bibitem{yoshidaidemp}
T.~Yoshida.
\newblock Idempotents of {B}urnside rings and {D}ress induction theorem.
\newblock {\em J. of Algebra}, 80:90--105, 1983.

\end{thebibliography}

{
\centerline{\rule{5ex}{.1ex}}
\vspace{1cm}
\begin{flushleft}
Serge Bouc, CNRS-LAMFA, Universit\'e de Picardie, 33 rue St Leu, 80039, Amiens, France.\\
{\tt serge.bouc@u-picardie.fr}\vspace{1ex}\\
Nadia Romero, DEMAT, UGTO, Jalisco s/n, Mineral de Valenciana, 36240, Guanajuato, Gto., Mexico.\\
{\tt nadia.romero@ugto.mx}
\end{flushleft}
}

\end{document}